\documentclass[11pt,a4paper]{article}    
\usepackage{amsmath,amsfonts,amssymb,amsthm,amscd}
\usepackage[english]{babel}



\newtheorem{thm}{Theorem}
\newtheorem{corollaire}{Corollary}
\newtheorem{prop}{Proposition}
\newtheorem{lemma}{Lemma}
\newtheorem{remark}{Remark}

\newcommand{\dsp}{\displaystyle}
\newcommand{\eps}{\varepsilon}
\newcommand{\R}{\mathbb{R}}

\newcommand{\RR}{\mathbb{R}^2}

\newcommand{\supp}{\mathrm{supp}}



\numberwithin{equation}{section}

\date{}                                          

\begin{document}
 
\title{On the attractive plasma-charge system in 2-d}

 \author{S. Caprino$\,^1$, C. Marchioro$\,^2$, E. Miot$\,^3$ and M. Pulvirenti$\,^4$}

\footnotetext 
[1]{Dipartimento di Matematica, Universit\`a di Tor Vergata,
Via della Ricerca Scientifica 1, 00133 Roma, Italy. E-mail: {caprino@mat.uniroma2.it}}
\footnotetext 
[2]{Dipartimento di
Matematica Guido Castelnuovo, Universit\`a La Sapienza, P.le A. Moro, 00185 Roma, Italy. E-mail:
 {marchior@mat.uniroma1.it}}
\footnotetext
[3]{Laboratoire de Math\'ematiques, Universit\'e Paris-Sud 11, B\^at. 425, 91405 Orsay,
France. E-mail: {evelyne.miot@math.u-psud.fr}}
\footnotetext 
[4]{Dipartimento di
Matematica Guido Castelnuovo, Universit\`a La Sapienza, P.le A. Moro, 00185 Roma, Italy. E-mail:
 {pulviren@mat.uniroma1.it}}

\maketitle

\begin{abstract}
 We study a positively charged Vlasov-Poisson plasma in which 
$N$ negative point charges are immersed. 
The attractiveness of the system forces us to consider a possibly unbounded plasma density
 near the charges. 
We prove the existence of a  global in time solution, assuming a suitable initial distribution of the velocities of the plasma particles. Uniqueness remains unsolved.
 \end{abstract}

\medskip

\medskip

\noindent Mathematics Subject Classification: Primary: 82D10, 35Q99; Secondary: 35L60.

\medskip
\noindent  Keywords: Vlasov-Poisson equation.

\section{Introduction}

An interesting physical situation in Plasma Physics is when a system of $N$ heavy charged particles (say positive ions for instance) evolves in a plasma, that is a sea of light particles, of opposite sign (say electrons). The latter subsystem is often conveniently described in terms of a mean-field approximation by a continuous distribution $f(x,v,t)$, being $x$, $v$, $t$ position, velocity of a light particle and time respectively. Thus the time evolution of the full system is given by the following Vlasov-Poisson equation
\begin{equation}
(\partial_t +v\cdot \nabla_x+(E-F)\cdot \nabla_v)f(x,v,t)=0,
\label{VP0}
\end{equation}
where $E$ is the self-consistent electric field generated by the continuous 
charge distribution $f$ (see details below) and $F$ is the electric field generated by 
the point charges whose positions at time $t$ are denoted by $\xi^1(t), \ldots, \xi^N(t),$ that is
\begin{equation}
F(x,t)=\sum_{i=1}^N F^{i}( x,t),
\label{VP00}
\end{equation}with
\begin{equation}
F^{i}( x,t)=
\frac {x-\xi^i(t)}{|x-\xi^i(t)|^d} \ .
\label{VP1}
\end{equation}
Here $d=2,3$ denotes the dimension of the physical space and we are assuming, 
for notational simplicity, that charges and masses of the point charges are identical and unitary.

Equation \eqref{VP0} has to be complemented by the 
ODE describing the motion of the point charges which is, for any 
$i=1,\ldots,N,$ 
\begin{equation}
\dot \eta^i =-E(\xi^i,t)+ \sum_ {j\neq i=1}^N F^{j}( \xi^i,t),
\label{VP2}
\end{equation}
where $\eta^i =\dot \xi^i $ denotes the velocity of the $i$-th charge.

The purpose of this paper is to study the existence of a global solution to eq.ns \eqref{VP0}-\eqref{VP2}.
In the completely repulsive case, the issue of global existence and uniqueness of the solution
has already been approached and solved: 
in \cite{1} and \cite{CM1} for bounded and unbounded two-dimensional plasma distributions and in \cite{MMP} for a 
bounded plasma in three dimensions. 
 The background on which these papers and the present one are 
based is the complete and satisfactory theory for the usual Vlasov-Poisson 
equation (namely equation \eqref{VP0} with $F=0$) which has been developed in 
many articles as \cite{G,Ho1,Ho2,LP,Lo,Uk,P,S,W80,W93} and others. Of course adjoining point charges 
to a continuous charge distribution implies adding  singular forces as the field $F$ 
defined in \eqref{VP00}-\eqref{VP1}. 
Hence the existing theory is strongly perturbed and has to be deeply modified. Some papers related to this context 
are \cite{MM1,MM2,Sal,ZM}.

The essential tool employed in \cite{1} 
and \cite{MMP}, which will be used also here, is the study of a function $h(x,v,t)$ defined, in case of a single charge, as follows:
$$
 h(x,v,t)=\frac{|v-\eta(t)|^2}{2}+\sigma\ G(|x-\xi(t)|), 
 $$
where $G$ is the fundamental solution of the Poisson equation and $\sigma =\pm 1$ for the repulsive and attractive case 
respectively. It represents the energy of a plasma particle in the reference frame relative to the point charge, and of 
course  it is not a time invariant function. Nevertheless it  presents two essential properties: first, its time derivative 
does not depend upon the singular forces, which is crucial in proving its boundedness, and secondly, in the completely 
repulsive case ($\sigma=+1$), it has good sign properties to give a control on the velocity of the plasma particle and 
its distance from the charge. In the model discussed in this paper, that is in the attractive case,  
the second property is not satisfied, being  $\sigma =- 1.$ Consequently the sign of $h$ is not defined 
and hence getting a bound on $h$ does not imply that the single terms appearing in its definition are bounded. Indeed, 
in contrast with the repulsive case, here there can be plasma particles that, 
even starting far apart from the charge,  arrive close to it in a finite time while
gaining arbitrarily large velocities, but still having bounded energy. 

We stress that we succeed here in proving the global existence of the time evolution 
of system \eqref{VP0}-\eqref{VP2}, but not its uniqueness, for which we think that new considerations are needed.

The paper consists of six sections. After the introduction, in Section \ref{sec:one-charge} we pose the problem and 
present the main result (Theorem \ref{thm:main}), stated for a system consisting of a positive plasma density and a single, negative point 
charge. We start with the single-charge case in order to provide a proof which is clear and 
contains all essential tools that are needed also for the $N$-charges case. 
Sections \ref{sec:approx} to \ref{section:compactness} are then devoted to the proof of Theorem \ref{thm:main};
In Section \ref{sec:approx} we introduce a family of regularized differential systems, for which we 
establish many preliminary estimates holding uniformly with respect to the regularization. In Section \ref{sec:energy} 
we show the main technical result in this paper, 
that is the boundedness of the above mentioned function $h$ (see Theorem \ref{thm:main2}). 
This allows, in Section \ref{section:compactness}, 
to prove the existence of a global solution of the system, obtained as limit of the regularized dynamics. 
Finally in Section \ref{N} we state and prove global existence of a solution to system \eqref{VP0}-\eqref{VP2} 
for $N$ charges (Theorem \ref{thm:mainN}).
 
 \section{The result for a single charge}\label{sec:one-charge}

In this section we consider a plasma in two dimensions with only one charge. 
We set $( \xi(t),\eta(t))$ for position and velocity of the charge at 
time $t$, being $( \xi, \eta)$ their initial data.
Moreover $f=f(t)\in L^\infty(L^1\cap L^\infty)$ denotes the density of the plasma, and we assume that $f(0)=f_0$
is a bounded probability density.

Equation \eqref{VP0} describes a conservation law for the density 
along the time evolution of the characteristics, which is, at least formally, 
given by  the following differential system:
\begin{equation}
\label{eq:system}
\begin{cases}
\dsp  \dot{x}(x,v,t)=v(x,v,t) \\\vspace*{0.5em}
\dsp  \dot{v}(x,v,t)= (E-F)\left(x(x,v,t),t\right)\\\vspace*{0.5em}
\dsp ( x(x,v,0), v(x,v,0))=(x,v)\in \R^2\setminus\{\xi\}\times \R^2 \\\vspace*{0.5em}
\dsp E(x,t)=\int_{\R^2} \frac{x-y}{|x-y|^2}\rho(y,t)\,dy\\\vspace*{0.5em}
\dsp \rho(x,t)=\int_{\R^2} f(x,v,t)\,dv\\\vspace*{0.5em}
\dsp F(x,t)=\frac{x-\xi(t)}{|x-\xi(t)|^2}\\\vspace*{0.5em}
 \dsp f\left(x(x,v,t),v(x,v,t),t\right)=f_0(x,v),
 \end{cases}
\end{equation}
together with the evolution of the charge, moving according to:
\begin{equation}
\label{ch}
\begin{cases}
\dsp \dot{\xi}(t)=\eta(t)\\
\dsp \dot{\eta}(t)=-E(\xi(t),t)\\
\dsp (\xi(0), \eta(0))=(\xi,\eta)\in \R^2\times \R^2.
 \end{cases}
 \end{equation}
Note that, if $f_0$ is smooth, then any solution $(\xi(t),\eta(t);f(t))$ to system \eqref{eq:system}-\eqref{ch} 
satisfies the Vlasov-Poisson equation \eqref{VP0}-\eqref{VP2}. 
Clearly, the ODE  in \eqref{eq:system}  are not well-defined if some plasma particles  
collide with the charge in finite time;
However Theorem \ref{thm:main} below ensures that, despite the attractive interaction between plasma and
charge, such collapses can be essentially avoided under
suitable assumptions on the support of $f_0$.

\medskip

We introduce the function:
\begin{equation*}
 h(x,v,t)=\frac{1}{2}|v-\eta(t)|^2+\ln|x-\xi(t)| 
\end{equation*}
which represents the energy of a plasma particle in the reference frame of the moving charge. 
Moreover we set $S_0$ for the support of  $f_0$, which can be possibly an unbounded set, 
and we define the quantity
\begin{equation*}
\mathcal{H}(t)=\sup_{s\in[0,t]}\sup_{(x,v)\in S_0}|h(x(t),v(t),t)|+C 
\end{equation*}
with $C$ a sufficiently large constant for further purposes. 
We will prove that if $\mathcal{H}(0)$ is finite, then $\mathcal{H}(t)$ remains finite on bounded time intervals and consequently the velocities of the plasma particles are logarithmically diverging as they approach 
the charge. Nevertheless such slight divergence will not prevent us 
to prove  global existence of a solution to 
\eqref{eq:system}-\eqref{ch}.   

\medskip

In the sequel we will often use the notation
\begin{equation*}
\ln_-r= -\ln r\ \chi( r\in (0,1] ), 
\end{equation*}
with $\chi(A)$ the characteristic function of the set $A.$ 

We will set $C$ for a positive constant and $C_i,$ $i=1,2,\ldots,$ 
for some constants to be quoted in the course of the paper. All of 
them will possibly depend on $||f_0||_{L^{\infty}},$ $||f_0||_{L^{1}}$ and on an arbitrarily fixed time $T.$
Finally, for sake of brevity  we will sometimes use the 
shortened notation $( x(t),v(t))$ instead of $(x(x,v,t),v(x,v,t))$.

\medskip

Our main result is the following:

\begin{thm}\label{thm:main}
 Let $(\xi,\eta)\in \R^2\times \R^2$ and $f_0\in L^\infty(\R^2\times \R^2)$ be a probability density supported on the set 
 \begin{equation}
S_0=\left\{(x,v)\in \R^2\times \R^2:   |h(x,v,0)|\leq C_0\right\}\label{hp}
\end{equation}
for some positive $C_0.$

Let $T>0$. Then there exist 
\begin{equation*}
 \begin{split}
&f\in L^\infty\left([0,T]; L^\infty\cap L^1(\R^2\times \R^2)\right),\quad 
E\in L^\infty\left([0,T];L^\infty(\R^2)\right),\\
& (\xi(\cdot),\eta(\cdot))\in C^1([0,T])^2,\end{split}\end{equation*}
and for $d\mu_0$-a.a. $(x,v)\in S_0$ there exists $(x(\cdot),v(\cdot))\in C^1([0,T])^2$
 such that  $\big(x(t),v(t);\xi(t),\eta(t);f(t)\big)$
satisfy system  \eqref{eq:system}-\eqref{ch} on $[0,T]$. In particular,
 for $d\mu_0$-a.a. $(x,v)\in S_0$ and $\forall t\in [0,T]$ we have $|x(t)-\xi(t)|>0$  and
 \begin{equation}
 f(x(t),v(t),t)=f_0(x,v).\label{inv}
\end{equation}

Moreover,
\begin{equation}
\mathcal{H}(T)\leq C
 \label{p10}
\end{equation}
and finally
\begin{equation}\label{p1}
 |\rho(x,t)|\leq C\left(1+\ln_-|x-\xi(t)|\right),\quad (x,t)\in \RR\times [0,T].
\end{equation}

\end{thm}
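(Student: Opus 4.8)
The plan is to construct the solution by a regularization-and-compactness argument, following the scheme announced in the introduction. First I would introduce a family of regularized problems: replace the singular kernels $x/|x|^2$ (both in the plasma-charge force $F$ and in the charge's self-field, when relevant) by truncated kernels that agree with the original away from a scale $\eps$, so the regularized ODE system for $(x_\eps(t),v_\eps(t))$ and $(\xi_\eps(t),\eta_\eps(t))$ has globally defined, $C^1$ solutions by standard Cauchy--Lipschitz theory, and the regularized $f_\eps$ is transported along these characteristics with $\|f_\eps(t)\|_{L^1}=1$ and $\|f_\eps(t)\|_{L^\infty}=\|f_0\|_{L^\infty}$ for all $t$. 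The point of the regularization is that all the estimates one proves must be \emph{uniform in $\eps$}; this is where the hard analytic work sits, and it is exactly the content of the preliminary estimates of Section~\ref{sec:approx} together with the key boundedness of the energy function $h$ from Theorem~\ref{thm:main2}.

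Next I would invoke Theorem~\ref{thm:main2}: for initial data supported in $S_0$ as in \eqref{hp}, one has a bound $\mathcal H_\eps(T)\le C$ uniform in $\eps$, i.e. $|h_\eps(x_\eps(t),v_\eps(t),t)|\le C$ along every regularized characteristic starting in $S_0$, on $[0,T]$. From the definition $h=\frac12|v-\eta|^2+\ln|x-\xi|$ this yields two consequences: a lower bound $|x_\eps(t)-\xi_\eps(t)|\ge e^{-C'}\cdot e^{-\frac12|v_\eps(t)-\eta_\eps(t)|^2}$ keeping plasma particles away from the charge unless their relative speed is large, and a velocity growth estimate $\tfrac12|v_\eps(t)-\eta_\eps(t)|^2\le C + \ln_-|x_\eps(t)-\xi_\eps(t)|$. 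The density bound \eqref{p1} then follows by the standard argument: $\rho_\eps(x,t)=\int f_\eps\,dv$ is supported, in $v$, in a ball whose radius is controlled by the $h$-bound, namely $|v-\eta_\eps(t)|\lesssim \sqrt{C+\ln_-|x-\xi_\eps(t)|}$, so integrating $\|f_0\|_{L^\infty}$ over a disc of that radius gives $\rho_\eps(x,t)\le C(1+\ln_-|x-\xi_\eps(t)|)$, uniformly in $\eps$. Note one must also know that the velocity support stays bounded for $x$ bounded away from $\xi_\eps(t)$, which again is immediate from $h_\eps$ being bounded; together these give $\rho_\eps\in L^\infty_t(L^\infty_{x})$ with the logarithmic blow-up only at the charge, hence in particular $\rho_\eps$ is locally bounded and $E_\eps=\rho_\eps * (x/|x|^2)$ is bounded in $L^\infty([0,T]\times\R^2)$ uniformly in $\eps$ (a log-bounded $L^1\cap L^\infty_{\mathrm{loc}}$ density produces a bounded field by the usual splitting of the convolution into near and far parts).

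Then I would pass to the limit $\eps\to 0$ by compactness. The uniform bounds give: $f_\eps$ bounded in $L^\infty([0,T];L^1\cap L^\infty)$, hence weak-$\ast$ convergent (up to a subsequence) to some $f$; $\xi_\eps,\eta_\eps$ bounded in $C^1$ via the field bound and \eqref{ch}, hence (Ascoli) convergent in $C^1([0,T])$ to $(\xi,\eta)$; $E_\eps$ bounded in $L^\infty$, convergent weak-$\ast$ to $E$, and one checks $E=\rho*(x/|x|^2)$ with $\rho=\int f\,dv$ using the strong compactness of the spatial densities (velocity averaging / the classical argument à la \cite{1}). For the characteristics: for $d\mu_0$-a.e. initial datum $(x,v)\in S_0$, the trajectories $(x_\eps(t),v_\eps(t))$ are equibounded and equi-$C^1$ on compact time intervals (velocities grow at most like $\sqrt{\ln_-}$, positions are Lipschitz, accelerations are controlled by $\|E_\eps\|_\infty$ plus the singular force which is integrable in time along trajectories thanks to the $h$-bound — this last point, controlling $\int_0^T |F_\eps(x_\eps(s),s)|\,ds$ uniformly, is the delicate estimate and is precisely why Theorem~\ref{thm:main2} is formulated for $h$), so again by Ascoli they converge in $C^1$ to a limiting characteristic $(x(t),v(t))$ which, by the uniform lower bound on $|x_\eps-\xi_\eps|$ away from a small-measure exceptional set, satisfies $|x(t)-\xi(t)|>0$ for all $t$ and solves \eqref{eq:system}. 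Finally \eqref{inv} is the transport identity $f(x(t),v(t),t)=f_0(x,v)$, which passes to the limit along the a.e.-convergent characteristics, and \eqref{p10}, \eqref{p1} pass to the limit from their uniform-in-$\eps$ counterparts.

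The main obstacle, and the heart of the paper, is obtaining the bound on $h$ (Theorem~\ref{thm:main2}) uniformly in the regularization — equivalently, showing the set of ``bad'' plasma particles that collide with the charge has vanishing measure and that the singular force $F_\eps$ is integrated in time uniformly along the good trajectories. Unlike the repulsive case, $h$ has no sign, so a mere bound on $h$ does not separately bound $|v-\eta|$ and $|\ln|x-\xi||$; one must instead exploit the crucial structural fact that $\frac{d}{dt}h$ does not involve the singular force (the $F$-contributions from the $\dot v$ and the $\dot\eta$ equations cancel against $\frac{d}{dt}\ln|x-\xi|$), leaving only the regular field $E$ plus a cross term $-E(\xi)\cdot(\ldots)$, whose magnitude is governed by $\|E_\eps\|_\infty$ and the logarithmically-controlled density. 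Closing this into a Gronwall-type inequality for $\mathcal H_\eps(t)$ — using the density bound \eqref{p1}, which itself depends on $\mathcal H_\eps$ — is the bootstrap that makes the whole argument work, and is the step I expect to require the most care.
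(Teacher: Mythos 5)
Your overall architecture (regularize, prove the uniform bound on $h_\eps$, deduce the logarithmic density bound, pass to the limit) matches the paper's, and you correctly identify the cancellation of the singular force in $\frac{d}{dt}h$ and the resulting bootstrap as the core difficulty. However, there are genuine gaps in the limit passage. A preliminary one: the regularized energy $h_\eps(x,v,0)$ is \emph{not} bounded on $S_0$ as defined in \eqref{hp}, because particles arbitrarily close to the charge must have large velocities which the truncated potential $\ln_\eps$ cannot compensate; the paper must therefore introduce a second cutoff $\beta$ on the initial support ($|x-\xi|>\beta$) and remove it at the very end, a step absent from your plan.

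More seriously, your compactness argument for the characteristics does not work as stated. You claim that $\int_0^T|F_\eps(x_\eps(s),s)|\,ds$ is controlled uniformly in $\eps$ ``thanks to the $h$-bound'', but the bound on $h_\eps$ gives no lower bound on $|x_\eps(t)-\xi_\eps(t)|$ (that is precisely the difference with the repulsive case), so the accelerations are not equibounded along trajectories and Ascoli does not apply trajectory-wise; and even where it would, it only yields a subsequence depending on the initial datum $(x,v)$, which is not enough to define a single limiting flow for $d\mu_0$-a.e.\ datum. The paper's route is different: it proves (i) \emph{uniform} convergence of $E_{\eps_n}$ on compacts (Proposition \ref{prop:compactness-E}, which requires equicontinuity in time via a separate splitting argument --- weak-$\ast$ convergence of $\rho_{\eps_n}$ would not suffice here), (ii) the quantitative measure estimate $|S_n(\delta)|\leq C\delta|\ln\delta|^{3/2}$ for the set of initial data whose trajectories approach the charge within $\delta$ (Lemma \ref{prop:mesure}, which rests on measure preservation of the flow and a time-partition argument, not on the $h$-bound alone), and (iii) that $(x_{\eps_n},v_{\eps_n})$ is a \emph{Cauchy} sequence in $L^1\left(d\mu_0;C([0,T])\right)$ (Proposition \ref{char}); step (iii) discards the bad set of measure $O(\delta|\ln\delta|^{3/2})$ and, on its complement, must absorb the Lipschitz constant $\delta^{-2}$ of the truncated force through a \emph{second-order} integral inequality and the log-Lipschitz Gronwall lemma (Lemma \ref{lemma:gronwall}), producing a modulus $[\omega(n,m)]^{\exp(-Ct/\delta)}$ that vanishes as $n,m\to\infty$ for each fixed $\delta$ before one lets $\delta\to0$. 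Without an argument of this type your proposal does not produce the a.e.-defined limiting characteristics, hence neither the transport identity \eqref{inv} nor the claim $|x(t)-\xi(t)|>0$.
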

\begin{remark}\label{rem:1}
 In the proof of Theorem \ref{thm:main} we shall use the fact that \eqref{p1}, which is a consequence of \eqref{p10},
 ensures the uniqueness of the solutions
$\big(x(t),v(t);\xi(t),\eta(t)\big)$  to the ODE in \eqref{eq:system}-\eqref{ch} once $f(t)$ (hence $E(t)$)
 is given (see Corollary
\ref{cor:1} and Lemma \ref{lemma:gronwall}). However notice that we are not claiming the uniqueness of the triple $\big( x(t),v(t);\xi(t),\eta(t);f(t)\big).$

\end{remark}

\begin{remark}\label{rem:2}

We stress a significant difference with respect to the repulsive case treated in \cite{1}  and \cite{MMP}.  
In those papers the assumption for $\mathcal{H}(0)$ to be finite was equivalent to assuming a 
finite distance between charge and plasma at time $t=0,$ while in the present case it is not so. This is an intrinsic difficulty in this setup since, even assuming an initial positive distance between plasma and charge, we could not exclude that some plasma particle arrive at any prefixed distance from the charge. 
\end{remark}

\begin{remark}\label{rem:3}
Hypothesis \eqref{hp} on the support of $f_0$ ensures that its spatial support is bounded. 
Indeed we have 
$|x-\xi|\leq  e^{C_0-\frac{1}{2}|v-\eta|^2}\leq e^{C_0}.$ By \eqref{p10} this property will be preserved in time (see \eqref{supp}).
\end{remark}

\begin{remark}\label{rem:4}
We do not claim that the bound \eqref{p1} is optimal.
\end{remark}

\section{The approximating system}
\label{sec:approx}
In this section, we introduce a regular version of the original system \eqref{eq:system}-\eqref{ch} 
 by mollifying the singular field $F$ created by the  charge. More 
precisely, for a small parameter $0<\eps<1$, we consider the smooth increasing function  $ \ln_\eps : [0,+\infty)\to \R$ 
such that
\begin{equation}
 \ln_\eps r\geq 2\ln \eps\quad \text{if}\quad r\leq \eps,\quad \quad \ln_\eps r=\ln r\quad 
\text{if}\quad r\geq \eps. \label{logg}
\end{equation}

We consider next the unique solution 
$\big((x_\eps(t),v_\eps (t); \xi_\eps(t),\eta_\eps(t); f_\eps(t)\big)$ to the following $\eps$-problem on $[0,T]:$ 
\begin{equation}
\label{eq:syst1}
\begin{cases}
  \dsp   \dot{v}_\eps=(E_\eps-F_\eps)(x_\eps,t) \\ 
\dsp (x_\eps(0),v_\eps(0))=(x, v)\\
\dsp   \dot{\eta}_\eps=-E_\eps(\xi_\eps,t)\\
\dsp ( \xi_\eps(0),\eta_\eps(0))=(\xi,\eta)
 \end{cases}
\end{equation}
where:
\begin{equation}\begin{cases}
\dsp E_\eps(x,t)=\int \rho_\eps(y,t)\frac{x-y}{|x-y|^2}\,dy\\
\dsp F_\eps(x,t)=\nabla_x \ln_\eps |x-\xi_\eps(t)|\\
\dsp \rho_\eps(y,t)=\int f_\eps(y,v,t)\,dv
\end{cases}\end{equation}
and satisfying:
\begin{equation}
 \label{eq:transport-one}
 f_\eps\left(x_\eps(t),v_\eps(t),t\right)=f_0^\eps(x,v),
\end{equation}
where $f_0^\eps$ is a smooth, compactly supported approximation of $f_0$.
Here we are in presence of the Vlasov-Poisson problem with an additional smooth gradient external 
field, for which the classical theory for global existence and uniqueness of the solution applies with minor modifications. 

Thanks to \eqref{eq:transport-one} we have
$
\|f_\eps(t)\|_{L^\infty} = \|f_0^\eps\|_{L^\infty}.
$
 Moreover, since \eqref{eq:syst1} is hamiltonian the flow $(x,v)\mapsto (x_\eps(x,v,t),v_\eps(x,v,t))$ 
preserves 
the Lebesgue's measure on $\R^2\times \R^2$ for all $t\geq 0$. Consequently all the $\|f_\eps(t)\|_{L^p}$ norms ($p\geq 1)$ are conserved.

We introduce next the regularized relative energy per plasma particle:
\begin{equation}
 h_\eps(x,v,t)=\frac{1}{2}|v-\eta_\eps(t)|^2+\ln_\eps|x-\xi_\eps(t)| \label{h}
\end{equation}
and we set:
\begin{equation}
\mathcal{H}_\eps(t)=\sup_{s\in[0,t]}\sup_{(x,v)\in S_0}|h_\eps(x_\eps(t),v_\eps(t),t)|+C_1.\label{h'}
\end{equation}
Notice at this point that definition \eqref{h'} does not 
allow us to consider initial data $f_0$ satisfying 
assumption \eqref{hp}, since there are configurations of particles for which $h(x,v,0)$ is 
bounded while $h_\eps(x,v,0)$ is not. 
This is due to the smoothed potential $\ln_\eps$ which cannot compensate large velocities of 
particles that are very close to the charge. To overcome this difficulty we introduce another positive  
parameter $\beta>\eps$ and, instead of considering initial data $f_0^\eps$, we consider $f^\beta_0$ supported in the set
\begin{equation}
 S_0^{\beta}=\left\{(x,v)\in \R^2\times \R^2:   |h(x,v,0)|\leq C_0, |x-\xi|> \beta \right\}
 \subset S_0. \label{beta}
\end{equation}
We observe that $S_0^\beta$ is a bounded set (see Remark \ref{rem:3}). Setting now
\begin{equation}
\mathcal{H}_\eps^\beta(t)=\sup_{s\in[0,t]}\sup_{(x,v)\in S_0^\beta}|h_\eps(x_\eps(t),v_\eps(t),t)|+C_1\label{h''}
\end{equation}
we have by definition \eqref{beta}:
\begin{equation}
\mathcal{H}_\eps^\beta(0)=\mathcal{H}(0)<C_0\label{acca0}
\end{equation}
and $f_0^\beta$ is a compactly supported function satisfying the assumptions of Theorem \ref{thm:main}.

We will prove that the solution $f_\eps^{\beta}(t)$ to system \eqref{eq:syst1} with initial condition $f_0^{\beta}$  
enjoys estimates independent of $\eps$ and $\beta$ making it possible to pass to the limit as $\eps\to 0$ and $\beta \to 0.$
To simplify the notation from now on we will sometimes omit the index $\beta,$ but we keep in mind that the solution depends 
on both parameters.  We emphasize that all constants appearing in what follows do not depend on $\eps$ and $\beta.$

Let us introduce the total energy of the system, which is a conserved quantity:
\begin{equation}
\begin{split}
& \mathcal{E}_\eps(0)= 
\mathcal{E}_\eps(t)=\frac12\int |v|^2f_\eps(x,v,t)\,dx\,dv+\frac{|\eta_\eps(t)|^2}{2}+C_2\\ &-\frac12\iint \ln|x-y|\rho_\eps(x,t)\rho_\eps(y,t)\,dx\,dy
+\int \ln_\eps |x-\xi_\eps(t)|\rho_\eps(x,t)\,dx.
\end{split}\label{energy}
\end{equation}

Our first observation is that $ \mathcal{E}_\eps(0)$ is positive and bounded uniformly in $\eps$ and $\beta$, 
as it is stated in the following 
\begin{prop}
\label{prop:E0}
If $f_0$ is supported on the set $S_0^\beta$ given by \eqref{beta}, then every 
single term appearing in the definition of $\mathcal{E}_\eps(0)$ is bounded. As a consequence we have:
$$
0\leq \mathcal{E}_\eps(0)\leq C_3
 $$
 provided that $C_2$ has been chosen large enough.\end{prop}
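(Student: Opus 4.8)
The plan is to evaluate $\mathcal{E}_\eps(0)$ summand by summand, the data there being $f_\eps(\cdot,\cdot,0)=f_0^\beta$, $\xi_\eps(0)=\xi$, $\eta_\eps(0)=\eta$, so that the terms $\tfrac{1}{2}|\eta|^2$ and $C_2$ are fixed constants requiring no argument. Everything else rests on two elementary consequences of the support condition \eqref{beta}. First, by Remark \ref{rem:3} the spatial support of $f_0^\beta$ is contained in $\{\beta<|x-\xi|\le e^{C_0}\}$, and on $S_0^\beta$ the inequality $|h(x,v,0)|\le C_0$ gives $\tfrac{1}{2}|v-\eta|^2\le C_0-\ln|x-\xi|$, hence $|v-\eta|^2\le 2C_0+2\ln_-|x-\xi|$. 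Integrating in $v$ and using $\|f_0^\beta\|_{L^1}=1$ and $\|f_0^\beta\|_{L^\infty}\le C\|f_0\|_{L^\infty}$, one gets the pointwise bound $\rho_0^\beta(x)\le C\bigl(1+\ln_-|x-\xi|\bigr)\chi(|x-\xi|\le e^{C_0})$ with $C$ depending only on $\|f_0\|_{L^\infty}$ and $C_0$, in particular independent of $\eps$ and $\beta$; since $r\mapsto(\ln_-r)^p$ is integrable over bounded subsets of $\R^2$ for every finite $p$, this already yields $\|\rho_0^\beta\|_{L^1}=1$ and $\|\rho_0^\beta\|_{L^p}\le C_p$ for all $p<\infty$, uniformly in $\eps,\beta$.

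Next I would estimate the three nontrivial terms. For the \emph{kinetic energy}, $\tfrac{1}{2}\int|v|^2f_0^\beta\le\int|v-\eta|^2f_0^\beta+|\eta|^2\le(2C_0+|\eta|^2)\|f_0^\beta\|_{L^1}+2\int\ln_-|x-\xi|\,\rho_0^\beta(x)\,dx$, and the last integral is finite and $\eps,\beta$-independent because $\int_{|z|\le 1}\bigl(\ln_-|z|+(\ln_-|z|)^2\bigr)\,dz<\infty$. For the \emph{plasma--charge term}, on $\supp\rho_0^\beta$ one has $|x-\xi|>\beta>\eps$, so $\ln_\eps|x-\xi|=\ln|x-\xi|$ there by \eqref{logg}; hence $\int\ln_\eps|x-\xi|\,\rho_0^\beta\,dx=\int\ln|x-\xi|\,\rho_0^\beta\,dx$, which is $\le C_0\|\rho_0^\beta\|_{L^1}$ from above and $\ge-\int\ln_-|x-\xi|\,\rho_0^\beta\,dx\ge-C$ from below --- the constraint $\beta>\eps$ is precisely what makes this estimate $\eps$-uniform. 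For the \emph{plasma self-interaction}, since $|x-y|\le 2e^{C_0}$ on the support, $-\tfrac{1}{2}\iint\ln|x-y|\,\rho_0^\beta\rho_0^\beta\ge-\tfrac{1}{2}\ln(2e^{C_0})\|\rho_0^\beta\|_{L^1}^2\ge-C$; and from above $-\tfrac{1}{2}\iint\ln|x-y|\,\rho_0^\beta\rho_0^\beta\le\tfrac{1}{2}\iint\ln_-|x-y|\,\rho_0^\beta(x)\rho_0^\beta(y)\,dx\,dy\le\tfrac{1}{2}\|\rho_0^\beta\|_{L^1}\sup_x\int\ln_-|x-y|\,\rho_0^\beta(y)\,dy\le C\|\rho_0^\beta\|_{L^1}\|\rho_0^\beta\|_{L^2}$ by Hölder's inequality and $\ln_-\,\chi(|z|\le1)\in L^2(\R^2)$.

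Collecting these estimates, every summand of $\mathcal{E}_\eps(0)$ lies in $[-C,C]$ with $C$ independent of $\eps$ and $\beta$, which is the first assertion; and since the kinetic term, $\tfrac{1}{2}|\eta|^2$ and $C_2$ are nonnegative while only the plasma self-interaction and the plasma--charge terms can be negative (each $\ge-C$), we obtain $\mathcal{E}_\eps(0)\ge C_2-2C\ge 0$ as soon as $C_2\ge 2C$, and $\mathcal{E}_\eps(0)\le C_2+2C=:C_3$. I expect the delicate point to be the plasma self-interaction term: the required uniformity in $\beta$ rules out the naive $L^1$--$L^\infty$ bound on the logarithmic convolution, since $\rho_0^\beta$ is \emph{not} bounded in $L^\infty$ uniformly in $\beta$ (it grows like $\ln(1/\beta)$ near the charge), so one is forced to use instead the $\beta$-uniform $L^p$ bounds for finite $p$ afforded by the integrable logarithmic growth of $\rho_0^\beta$ near the charge.
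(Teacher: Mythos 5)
Your proof is correct and follows essentially the same route as the paper: the pointwise bound $\rho_0(x)\leq C\left(1+\ln_-|x-\xi|\right)$ on a compact spatial support, the integrability of powers of $\ln_-$, and Cauchy--Schwarz (your H\"older step) for the self-interaction term. The only addition is your explicit remark that $\ln_\eps|x-\xi|=\ln|x-\xi|$ on $\supp f_0^\beta$ because $\beta>\eps$, which the paper leaves implicit but which is indeed the reason the plasma--charge term is $\eps$-uniform.
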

 \begin{proof}
 We start by recalling the following elementary fact: $\forall p> 0$ there exists a positive constant $C(p)$ such that
 \begin{equation}
 \int(\ln_-r)^pdr=C(p).  \label{elem}
 \end{equation}

Next, for any $(x,v)\in S_0^{\beta}$ we infer from definitions \eqref{h} and  \eqref{logg} that
\begin{equation*}
\begin{split}
| v|^2&\leq 2( | v- \eta|^2+| \eta|^2)\\&\leq 4\big(h_\eps(x,v,0)+\ln_-| x-\xi|\big)+2|\eta|^2.
\end{split}
\label{v0}\end{equation*}
Therefore  we deduce from definitions \eqref{h''}-\eqref{acca0} that, provided the constant $C_1$ is sufficiently large,
\begin{equation}| v|^2\leq C(\mathcal{H}(0)+\ln_-| x- \xi|)\quad \forall (x,v)\in S_0^\beta. \label{v01}
\end{equation}

This implies that for $x\in \RR$ we have
\begin{equation*}
\begin{split}
\rho_0(x)&=\int_{\{v:(x,v)\in S_0^\beta\}}\ f_0(x,v)\,dv\\&\leq 
||f_0||_{\infty}\int  \ \chi\Big(| v|\leq
 C\sqrt{ \mathcal{H}(0)+\ln_-| x- \xi|}\Big)\,dv,
\end{split}\end{equation*}
hence
\begin{equation}
\rho_0(x)\leq C\big(\mathcal{H}(0)+\ln_-| x- \xi|\big),\quad \forall x\in \RR. 
\label{dens1}
\end{equation}

Estimates \eqref{v01} and \eqref{dens1} enable us to prove that any single term
 appearing in the definition \eqref{energy} of $ \mathcal{E}(0)$ is bounded. Indeed, 
by Remark \ref{rem:3} on the compactness of the support of $\rho_0$ and by \eqref{dens1} and \eqref{elem} we have 
\begin{equation}
\int |\ln_\eps |x-\xi||\rho_0(x)\,dx\leq C.\label{P0}
\end{equation}
 Moreover, for the same reason:
\begin{equation*}
\begin{split}
\int |v|^2f_0(x,v)\,dx\,dv&\leq C\int (\ln_-| x- \xi|+1)f_0(x,v)\, dx\,dv \\ 
&\leq C\left(1+\int \ln_-| x- \xi|\left(\ln_-| x- \xi|+1\right)\ dx\right)
\end{split}\end{equation*}
hence
\begin{equation*}
\int |v|^2f_0(x,v)\,dx\,dv\leq C.
\end{equation*}
Finally:
\begin{equation*}
\begin{split}
\iint \big| &\ln|x-y|\big|\rho_0(x)\rho_0(y)\,dx\,dy\\\leq 
&C\iint_{\supp \rho_0}|\ln|x-y||(\ln_-| x- \xi|+1)(\ln_-| y- \xi|+1)  \,dx\,dy.\end{split}
\end{equation*}
Again by \eqref{elem} the above integral can be easily bounded by means of 
Cauchy-Schwarz inequality, so that   $ \mathcal{E}_\eps(0)$ is bounded uniformly in $\eps$ and positive, provided $C_2$ is sufficiently large.
\end{proof}
The preceding result does not give us any $\eps$-uniform bound 
on the single terms composing $ \mathcal{E}_\eps(t)$, since it could be bounded uniformly in $\eps$ by compensation. 
The next two results provide such informations. Their proof is extensively based on 
the conservation of the Lebesgue's measure and on the invariance of the plasma density  along the motion of the characteristics.

We set
$$
K_\eps(t)=\frac12\int |v|^2f_\eps(x,v,t)\,dx\,dv+\frac{|\eta_\eps(t)|^2}{2}.
$$
\begin{prop}
\label{prop:unif}
 \begin{equation}
 \sup_{t\in [0,T]}K_\eps(t)\leq C_4,\label{kin}
 \end{equation}
\begin{equation}
\sup_{t\in[0,T]}\int |x|\rho_\eps(x,t)\,dx\leq C_5,\label{mom}
\end{equation}
\begin{equation}
\sup_{t\in[0,T]}\|\rho_\eps(t)\|_{L^2}\leq C_6.\label{norm2}
\end{equation}

\end{prop}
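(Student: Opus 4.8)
The plan is to prove the three estimates simultaneously, coupling them through the conserved total energy of Proposition~\ref{prop:E0}, the standard interpolation bound for the macroscopic density, and a Gronwall argument for the first spatial moment $M_\eps(t):=\int|x|\rho_\eps(x,t)\,dx$. Besides Proposition~\ref{prop:E0}, I would only use that $\|f_\eps(t)\|_{L^\infty}=\|f_0^\beta\|_{L^\infty}$ and $\|\rho_\eps(t)\|_{L^1}=1$ are conserved along the flow of \eqref{eq:syst1}.

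First I would record the elementary interpolation inequality obtained by splitting $\int f_\eps(x,v,t)\,dv$ at $|v|=R$ and optimizing in $R$:
\begin{equation*}
\rho_\eps(x,t)\leq C\,\|f_\eps(t)\|_{L^\infty}^{1/2}\Big(\int|v|^2 f_\eps(x,v,t)\,dv\Big)^{1/2},
\end{equation*}
which, after integration in $x$ and using $\int|v|^2 f_\eps\,dx\,dv\leq 2K_\eps(t)$, gives
\begin{equation*}
\|\rho_\eps(t)\|_{L^2}^2\leq C\,K_\eps(t).
\end{equation*}
This already reduces \eqref{norm2} to \eqref{kin}, but it is also exactly the tool needed to control the attractive potential energy.

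Next I would exploit energy conservation. Solving \eqref{energy} for $K_\eps(t)$ and using $\mathcal{E}_\eps(t)=\mathcal{E}_\eps(0)\leq C_3$,
\begin{equation*}
K_\eps(t)\leq C+\frac12\iint\ln|x-y|\,\rho_\eps(x,t)\rho_\eps(y,t)\,dx\,dy-\int\ln_\eps|x-\xi_\eps(t)|\,\rho_\eps(x,t)\,dx,
\end{equation*}
and I would bound the two potential terms from above. For the self-interaction term I use $\ln|x-y|\leq\ln 2+\ln_+|x|+\ln_+|y|$ together with $\ln_+|x|\leq|x|$ and $\|\rho_\eps(t)\|_{L^1}=1$, which produces a bound by $C\big(1+M_\eps(t)\big)$. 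For the charge term I check from \eqref{logg} that $-\ln_\eps r\leq 2\ln_- r$ uniformly in $\eps$, and then by Cauchy--Schwarz, \eqref{elem} and the interpolation bound above,
\begin{equation*}
-\int\ln_\eps|x-\xi_\eps(t)|\,\rho_\eps(x,t)\,dx\leq 2\int\ln_-|x-\xi_\eps(t)|\,\rho_\eps(x,t)\,dx\leq C\,\|\rho_\eps(t)\|_{L^2}\leq C\sqrt{K_\eps(t)}.
\end{equation*}
Combining, $K_\eps(t)\leq C\big(1+M_\eps(t)+\sqrt{K_\eps(t)}\big)$, and absorbing $\sqrt{K_\eps}$ by Young's inequality yields $K_\eps(t)\leq C\big(1+M_\eps(t)\big)$.

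Finally I would close the loop through the first moment: differentiating $M_\eps$ along the characteristics of \eqref{eq:syst1} (the external field is $v$-independent, so the velocity term in the transport equation drops out of the computation), one gets $\dot M_\eps(t)\leq\int|v|f_\eps\,dx\,dv\leq\big(\int|v|^2 f_\eps\,dx\,dv\big)^{1/2}\leq\sqrt{2K_\eps(t)}\leq C\sqrt{1+M_\eps(t)}$, so $\frac{d}{dt}\sqrt{1+M_\eps(t)}\leq C$; since $M_\eps(0)$ is finite (the support of $f_0^\beta$ is bounded, Remark~\ref{rem:3}), this gives $M_\eps(t)\leq C_5$ on $[0,T]$, i.e.\ \eqref{mom}. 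Substituting back gives $K_\eps(t)\leq C_4$ on $[0,T]$, i.e.\ \eqref{kin}, and then \eqref{norm2} follows from the interpolation bound with $C_6=\sqrt{C\,C_4}$. The one genuinely delicate point is the circular dependence between $K_\eps$ and $\|\rho_\eps\|_{L^2}$ that is forced by the negative sign of the charge's potential energy (in the repulsive case this term has a favorable sign and no such argument is needed); it is broken by the Young-inequality absorption above, at the cost of bounds that are allowed to deteriorate in $T$.
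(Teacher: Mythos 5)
Your proposal is correct and follows essentially the same route as the paper: the interpolation bound $\|\rho_\eps(t)\|_{L^2}\leq C\sqrt{K_\eps(t)}$, energy conservation with the attractive charge term controlled by Cauchy--Schwarz and \eqref{elem}, the self-interaction term controlled by the first moment, and the moment closed back through the kinetic energy via a Gronwall-type argument. The only differences are cosmetic (you run a differential inequality for $M_\eps$ and absorb $\sqrt{K_\eps}$ by Young's inequality, whereas the paper writes the moment along characteristics and applies Gronwall to the combined integral inequality), and both versions are valid.
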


\begin{proof}
 For any $M\geq 0$, we have
\begin{equation*}
 \begin{split}
  \rho_\eps(x,t)&=\int_{|v|< M} f_\eps(x,v,t)\,dv+\int_{|v|\geq M}f_\eps(x,v,t)\,dv\\
&\leq  \pi M^2 \|f_\eps(t)\|_{L^\infty}+\frac{1}{M^2}\int |v|^2 f_\eps(x,v,t)\,dv.
 \end{split}
\end{equation*}
 By optimizing in $M$ we find:
\begin{equation*}
 \rho_\eps(x,t)\leq C\left( \int |v|^2 f_\eps(x,v,t)\,dv\right)^{1/2},
\end{equation*}
 whence, by definition of $K_\eps(t)$:
\begin{equation}
 \label{ineq:rho}
\|\rho_\eps(t)\|_{L^{2}}\leq C\sqrt{K_\eps(t)}.
\end{equation}

On the other side, from the energy conservation and Proposition \ref{prop:E0} it follows that
\begin{equation}
\label{pp}\begin{split}
 K_\eps(t)&\leq C _3+\int\ \ln_-|x-\xi_\eps(t)|\rho_\eps(x,t)\,dx\\&+
 \frac12\iint_{|x-y|\geq 1} 
\ln|x-y|\rho_\eps(x,t)\rho_\eps(y,t)\,dx\,dy.
\end{split}
\end{equation}
Now,  Cauchy-Schwarz inequality and \eqref{ineq:rho} yield
\begin{equation}
\sup_z\int\ \ln_-|x-z|\rho_\eps(x,t)\,dx\leq C\|\rho_\eps(t)\|_{L^2}\leq C\sqrt{K_\eps(t)}. \label{rho^2}
\end{equation}
Moreover    $0\leq\ln r\leq r$ for any $ r\geq 1$. 
Hence \eqref{pp} and \eqref{rho^2} imply:
\begin{equation}
\begin{split} K_\eps(t)&\leq C_3+C\sqrt{K_\eps(t)}+\frac12\iint (|x|+|y|)\rho_\eps(x,t)\rho_\eps(y,t)\,dx\,dy\\
&\leq C_3+C\sqrt{K_\eps(t)}+\int |x|\rho_\eps(x,t)\,dx.
\label{ineq:rho2}\end{split}
\end{equation}

Next, by \eqref{eq:transport-one} and the fact that the flow preserves the Lebesgue's measure on $\RR\times \RR$ we have
\begin{equation}
 \begin{split}
\int |x|\rho_\eps(x,t)\,dx
&=\int |x_\eps (x,v,t)|f_0(x,v)\,dx\,dv\\
&\leq  \int \left(|x|+\int_0^t |v_\eps(x,v,s)|\,ds\right)f_0(x,v)\,dx\,dv.\end{split}
\end{equation}
Recalling Remark \ref{rem:3}, 
$\rho_0$ has compact support, so that applying again Cauchy-Schwarz inequality we get
\begin{equation}
 \begin{split}
\int |x|\rho_\eps(x,t)\,dx&\leq  C+\int_0^t \int |v|f_\eps(x,v,s)\,dx\,dv\,ds\\ 
&\leq C\left(1+\int_0^t \sqrt{K_\eps(t)}\,ds\right).
\end{split}
\label{ineq:rho3}\end{equation}

Going back to \eqref{ineq:rho2}, \eqref{ineq:rho3} implies: 
\begin{equation*}
\begin{split}
K_\eps(t)\leq C+C\sqrt{K_\eps(t)}+C \int_0^t \sqrt{K_\eps(t)}\,ds
\end{split}
\end{equation*}
and \eqref{kin} follows then from Gronwall's Lemma. Finally recalling \eqref{ineq:rho3} and \eqref{ineq:rho}
 we conclude that \eqref{mom} and \eqref{norm2} follow from \eqref{kin}.
\end{proof}

\begin{remark}\label{rem:velocity-charge}
We stress the fact that \eqref{kin} yields 
a bound on the velocity of the charge and consequently on  its motion, which remains confined over the interval $[0,T].$
\end{remark}
Proposition \ref{prop:unif} implies the following bounds on the potential terms in the energy:
\begin{corollaire}
\label{pot}
\begin{equation*}\begin{split}
&\sup_{t\in [0,T]}\int|\ln_\eps |x-\xi_\eps(t)||\rho_\eps(x,t)\,dx
\leq C\\
&\sup_{t\in [0,T]}\iint |\ln |x-y||\rho_\eps(x,t)\rho_\eps(y,t)\,dx\,dy\leq C.
\end{split}
\end{equation*}
\end{corollaire}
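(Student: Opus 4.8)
The plan is to derive both inequalities directly from the uniform bounds of Proposition \ref{prop:unif} (on $K_\eps$, on the first moment $\int|x|\rho_\eps\,dx$ and on $\|\rho_\eps\|_{L^2}$), together with an elementary pointwise bound on $|\ln_\eps r|$ that is uniform in $\eps$. Concretely, I would first observe that for every $r\geq 0$ one has
\[
|\ln_\eps r|\leq 2\ln_- r + r .
\]
Indeed, if $r\geq \eps$ then $\ln_\eps r=\ln r$ and $|\ln r|=\ln_- r+(\ln r)\,\chi(r\geq 1)\leq \ln_- r+r$; if $0<r<\eps\leq 1$, the monotonicity of $\ln_\eps$ and \eqref{logg} give $2\ln\eps\leq \ln_\eps r<0$, so $|\ln_\eps r|\leq 2\ln_-\eps\leq 2\ln_- r$. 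I would also record two facts used repeatedly below: since $f_\eps(t)$ is a probability density whose total mass is preserved by the flow, $\rho_\eps(t)$ is a probability density; and the velocity bound \eqref{kin} (cf.\ Remark \ref{rem:velocity-charge}) yields $|\xi_\eps(t)|\leq |\xi|+\int_0^t|\eta_\eps(s)|\,ds\leq C$ on $[0,T]$.

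For the first estimate I would apply the pointwise bound with $r=|x-\xi_\eps(t)|$ and integrate against $\rho_\eps(x,t)\,dx$:
\[
\int |\ln_\eps|x-\xi_\eps(t)||\,\rho_\eps(x,t)\,dx
\leq 2\int \ln_-|x-\xi_\eps(t)|\,\rho_\eps(x,t)\,dx+\int |x-\xi_\eps(t)|\,\rho_\eps(x,t)\,dx .
\]
The first term on the right is $\leq C\sqrt{K_\eps(t)}\leq C$ by \eqref{rho^2} and \eqref{kin} (equivalently, by \eqref{norm2} and Cauchy--Schwarz); the second is controlled using $|x-\xi_\eps(t)|\leq |x|+|\xi_\eps(t)|$ together with \eqref{mom}, the bound on $|\xi_\eps(t)|$ recalled above, and the fact that $\rho_\eps(t)$ has mass $1$.

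For the second estimate I would use $|\ln|x-y||\leq \ln_-|x-y|+|x|+|y|$ and Fubini to write
\[
\iint |\ln|x-y||\,\rho_\eps(x,t)\rho_\eps(y,t)\,dx\,dy
\leq \int \rho_\eps(y,t)\Big(\int \ln_-|x-y|\,\rho_\eps(x,t)\,dx\Big)dy+2\int|x|\,\rho_\eps(x,t)\,dx .
\]
By \eqref{rho^2} the inner integral is $\leq C\sqrt{K_\eps(t)}$ uniformly in $y$, and integrating the probability density $\rho_\eps(y,t)$ leaves a constant; the last term is $\leq 2C_5$ by \eqref{mom}. Taking the supremum over $t\in[0,T]$ in both displays gives the claim.

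I do not expect any real obstacle here: the corollary is essentially a bookkeeping consequence of Proposition \ref{prop:unif}. The only points deserving a little attention are the $\eps$-uniform control of $|\ln_\eps r|$ in the region $r<\eps$ (handled by the monotonicity estimate above) and the systematic use of the conservation of mass and of the $L^2$ bound on $\rho_\eps(t)$ to turn the a priori estimates into bounds that are uniform on the whole interval $[0,T]$.
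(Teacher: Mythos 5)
Your proof is correct, but it follows a genuinely different route from the paper's. The paper splits the two quantities into four pieces: the near part $-\int_{|x-\xi_\eps(t)|\leq 1}\ln_\eps|x-\xi_\eps(t)|\rho_\eps\,dx$ and the far part $\iint_{|x-y|\geq 1}\ln|x-y|\rho_\eps\rho_\eps\,dx\,dy$ are bounded directly via \eqref{rho^2} and \eqref{ineq:rho3} (this is \eqref{est:energy1}), and then the remaining two pieces --- $\int_{|x-\xi_\eps(t)|\geq 1}\ln_\eps|x-\xi_\eps(t)|\rho_\eps\,dx$ and $\frac12\iint\ln_-|x-y|\rho_\eps\rho_\eps\,dx\,dy$, which enter the conserved energy \eqref{energy} with a favorable sign --- are controlled by the conservation law $\mathcal{E}_\eps(t)=\mathcal{E}_\eps(0)\leq C_3$ together with \eqref{est:energy1} (this is \eqref{est:energy2}). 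You instead bound all four pieces directly from Proposition \ref{prop:unif}: the $\ln_-$ contributions via the $L^2$ bound \eqref{norm2} and Cauchy--Schwarz (i.e.\ \eqref{rho^2}), and the large-distance contributions via the first-moment bound \eqref{mom} together with the confinement of $\xi_\eps(t)$ from Remark \ref{rem:velocity-charge}; the only extra ingredient is your $\eps$-uniform pointwise bound $|\ln_\eps r|\leq 2\ln_- r+r$, whose verification on $r<\eps$ via \eqref{logg} and monotonicity is exactly the point that needed care and is handled correctly. Your argument is slightly more self-contained at the level of this corollary (no second appeal to energy conservation), while the paper's version makes transparent that once the bad-sign terms are tamed, the conservation of $\mathcal{E}_\eps$ automatically returns the good-sign terms for free; both yield the same constants up to inessential changes.
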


\begin{proof}
Arguing as in  \eqref{pp}-\eqref{ineq:rho3}, by Proposition \ref{prop:unif} we have thanks to \eqref{rho^2} and 
\eqref{ineq:rho3}   
\begin{equation}
\label{est:energy1}
\begin{split}
-\int_{|x-\xi_\eps(t)|\leq 1}&\ln_\eps |x-\xi_\eps(t)|\rho_\eps(x,t)\,dx\\&+\frac12\iint_{|x-y|\geq 1}
 \ln |x-y|\rho_\eps(x,t)\rho_\eps(y,t)\,dx\,dy\leq C.
 \end{split}
\end{equation}
On the other side from definition \eqref{energy} of the energy it follows that:
\begin{equation}
 \label{est:energy2}
\begin{split}
\int_{|x-\xi_\eps(t)|\geq 1}&\ln_\eps |x-\xi_\eps(t)|\rho_\eps(x,t)\,dx+\frac12\iint
 \ln_- |x-y|\rho_\eps(x,t)\rho_\eps(y,t)\,dx\,dy\\
& \leq  C_3-\int_{|x-\xi_\eps(t)|\leq 1}\ln_\eps |x-\xi_\eps(t)|\rho_\eps(x,t)\,dx\\
&+\frac12\iint_{|x-y|\geq 1}
 \ln |x-y|\rho_\eps(x,t)\rho_\eps(y,t)\,dx\,dy.
 \end{split}
\end{equation}
 Hence the conclusion follows from \eqref{est:energy1} and \eqref{est:energy2}.\end{proof}

\section{The function $H_\eps$}
\label{sec:energy}

The main result of this section is the following

\begin{thm}
\label{thm:main2}
\begin{equation*}
 \mathcal{H}_\eps(T)\leq C.
\end{equation*}
\end{thm}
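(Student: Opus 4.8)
The plan is to control the relative energy $h_\eps$ along the characteristics of \eqref{eq:syst1} and to close an integral inequality for $\mathcal{H}_\eps$. First I would differentiate $t\mapsto h_\eps(x_\eps(t),v_\eps(t),t)$ along the flow. Since $F_\eps(x,t)=\nabla_x\ln_\eps|x-\xi_\eps(t)|$, the kinetic part of $h_\eps$ contributes $(v_\eps-\eta_\eps)\cdot\big(E_\eps(x_\eps,t)-F_\eps(x_\eps,t)+E_\eps(\xi_\eps,t)\big)$ while the potential part contributes $F_\eps(x_\eps,t)\cdot(v_\eps-\eta_\eps)$; the two occurrences of $F_\eps$ cancel exactly, leaving
\begin{equation*}
\frac{d}{dt}h_\eps(x_\eps(t),v_\eps(t),t)=\big(v_\eps(t)-\eta_\eps(t)\big)\cdot\big(E_\eps(x_\eps(t),t)+E_\eps(\xi_\eps(t),t)\big).
\end{equation*}
This is the decisive structural point: the singular force has disappeared, only the smooth plasma field $E_\eps$ remains, and the task reduces to bounding the time integral of the right-hand side uniformly in $\eps$ and $\beta$.

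Next I would collect two $\eps$-uniform facts. Proposition \ref{prop:unif} gives $\|\rho_\eps(t)\|_{L^1}=1$ and $\|\rho_\eps(t)\|_{L^2}\le C_6$. Moreover, from the definition of $\mathcal{H}_\eps$, the transport identity \eqref{eq:transport-one}, and the conservation of Lebesgue measure, the set of $v$ with $(x,v)\in\supp f_\eps(\cdot,\cdot,t)$ has measure at most $C\big(\mathcal{H}_\eps(t)-\ln_\eps|x-\xi_\eps(t)|\big)$, which, using $\|f_\eps(t)\|_{L^\infty}=\|f^\beta_0\|_{L^\infty}$ and $-\ln_\eps r\le 2\ln_-r$, yields the bootstrap bound
\begin{equation*}
\rho_\eps(x,t)\le C\big(\mathcal{H}_\eps(t)+\ln_-|x-\xi_\eps(t)|\big).
\end{equation*}
Splitting the Newtonian kernel into the parts inside and outside a ball of radius $\delta$ about $x$, estimating $\rho_\eps$ on the inner ball by $C\mathcal{H}_\eps(t)$ plus a $\ln_-$ term (which is handled via \eqref{elem} and a H\"older inequality with an exponent $p>2$) and using $\|\rho_\eps\|_{L^1}=1$ on the outer part, then optimising in $\delta$, one arrives at
\begin{equation*}
\|E_\eps(t)\|_{L^\infty}\le C\sqrt{\mathcal{H}_\eps(t)}+C.
\end{equation*}

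Then I would integrate the displayed identity. From $\tfrac12|v_\eps-\eta_\eps|^2=h_\eps-\ln_\eps|x_\eps-\xi_\eps|$ one has $|v_\eps-\eta_\eps|\le C\sqrt{\mathcal{H}_\eps(t)}+C\sqrt{\ln_-|x_\eps-\xi_\eps|}$ whenever $|x_\eps-\xi_\eps|\ge\eps$ (with an extra term of order $\sqrt{|\ln\eps|}$ on $\{|x_\eps-\xi_\eps|<\eps\}$), and $|h_\eps(x,v,0)|\le\mathcal{H}(0)$ on $S_0^\beta$; inserting this and the field bound gives, for $(x,v)\in S_0^\beta$ and $s\le t$,
\begin{equation*}
|h_\eps(x_\eps(s),v_\eps(s),s)|\le \mathcal{H}(0)+C\int_0^s\Big(\mathcal{H}_\eps(\tau)+\sqrt{\mathcal{H}_\eps(\tau)}\,\sqrt{\ln_-|x_\eps(\tau)-\xi_\eps(\tau)|}\Big)\,d\tau+R_\eps(s),
\end{equation*}
where $R_\eps(s)$ gathers the contribution of the portions of the trajectory inside $\{|x_\eps-\xi_\eps|<\eps\}$. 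The crucial remaining estimate is a bound, uniform in $\eps$ and not involving $\mathcal{H}_\eps$, for $\int_0^s\sqrt{\ln_-|x_\eps(\tau)-\xi_\eps(\tau)|}\,d\tau$: on an excursion of the trajectory into a small ball $\{|x_\eps-\xi_\eps|\le r_0\}$ the speed $|v_\eps-\eta_\eps|$ is, away from the innermost ball, at least of order $\sqrt{\ln_-|x_\eps-\xi_\eps|}$, so on monotone pieces, passing to the radial variable $r=|x_\eps(\tau)-\xi_\eps(\tau)|$ converts $\int\sqrt{\ln_- r}\,d\tau$ into an integral of $\sqrt{\ln_- r}$ against $dr/\sqrt{\ln_- r}$, which is finite; the same device shows $R_\eps(s)\to0$ as $\eps\to0$. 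Once this is in hand, taking the supremum over $(x,v)\in S_0^\beta$ and $s\le t$ and absorbing $\sqrt{\mathcal{H}_\eps(t)}$ into $\mathcal{H}_\eps(t)$ yields $\mathcal{H}_\eps(t)\le C+C\int_0^t\mathcal{H}_\eps(\tau)\,d\tau$ (with $\mathcal{H}_\eps(t)<\infty$ for each fixed $\eps>0$ since the regularised dynamics is smooth), and Gronwall's lemma gives $\mathcal{H}_\eps(T)\le C$ with $C$ independent of $\eps$ and $\beta$.

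The step I expect to be the main obstacle is precisely the uniform-in-$\eps$ control of the time the characteristics spend near the charge, that is, of $\int_0^t\sqrt{\ln_-|x_\eps-\xi_\eps|}\,d\tau$ and of $R_\eps$. In the attractive case $h_\eps$ has no definite sign, so a plasma particle may reach any distance from the charge with arbitrarily large speed; one has to turn this very largeness into a bound (fast transit $\Rightarrow$ small time) while excluding that a trajectory performs an unbounded number of fast close approaches, or lingers near the charge with almost tangential velocity, and the argument must survive the passage of trajectories through $\{|x_\eps-\xi_\eps|<\eps\}$ where the regularised velocities are of order $\sqrt{|\ln\eps|}$.
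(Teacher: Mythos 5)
Your setup coincides with the paper's: the cancellation of the singular force in $\frac{d}{dt}h_\eps$, the bootstrap bound $\rho_\eps(x,t)\le C\big(\mathcal{H}_\eps(t)+\ln_-|x-\xi_\eps(t)|\big)$, and an $L^\infty$ bound on $E_\eps$ in terms of $\mathcal{H}_\eps$ (the paper actually gets the sharper $C\sqrt{\ln\mathcal{H}_\eps(t)}$ by optimising the splitting radius, but that discrepancy is not the issue). The genuine gap is exactly where you place it: the term $\int_0^t v_\eps(s)\cdot E_\eps(x_\eps(s),s)\,ds$ when the trajectory is near the charge, where $|v_\eps|$ may be of order $\sqrt{\ln_-|x_\eps-\xi_\eps|}$. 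Your proposed resolution --- passing to the radial variable $r=|x_\eps-\xi_\eps|$ and using $d\tau = dr/\dot r$ --- does not go through as stated. The energy bound $|h_\eps|\le\mathcal{H}_\eps$ gives a lower bound on the \emph{full} relative speed $|v_\eps-\eta_\eps|$, not on the radial component $\dot r$; a particle can linger near the charge with nearly tangential velocity, and ruling this out would require a separate geometric argument (curvature of fast trajectories versus the available force), which you do not supply. In addition the number of excursions into the small ball over $[0,T]$ is not controlled a priori, and inside $\{|x_\eps-\xi_\eps|<\eps\}$, where $\ln_\eps$ is flattened, there is no lower bound on the speed at all, so your remainder $R_\eps$ is a priori of order $\sqrt{|\ln\eps|}\,t$ rather than small. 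So the "main obstacle" you flag is real and your sketch does not overcome it.

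The paper circumvents this entirely with a device you do not have: Lemma \ref{lem}. Since $E_\eps=\nabla_x\Phi_\eps$, one writes $v_\eps(s)\cdot E_\eps(x_\eps(s),s)=\frac{d}{ds}\Phi_\eps(x_\eps(s),s)-\partial_s\Phi_\eps(x_\eps(s),s)$ and integrates in time: the total derivative produces only the boundary terms $\Phi_\eps(x_\eps(t),t)-\Phi(x,0)$, bounded by $C\sqrt{\mathcal{H}_\eps(t)}$ via the moment and $L^2$ bounds on $\rho_\eps$, while the partial time derivative becomes the current integral $\int f_\eps(y,w,s)\,w\cdot\frac{x_\eps(s)-y}{|x_\eps(s)-y|^2}\,dy\,dw$, which Proposition \ref{adjoined} bounds by $C\sqrt{\mathcal{H}_\eps(s)\ln\mathcal{H}_\eps(s)}$ uniformly in the spatial point. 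This removes any need to estimate the time spent near the charge or the pointwise size of $v_\eps$ along the trajectory. (The paper also needs the quasi-Lipschitz estimate of Proposition \ref{prop:al-lip} for the difference $E_\eps(\xi_\eps)-E_\eps(x_\eps)$ in the near-charge regime, together with the splitting threshold $\delta(t)=\mathcal{H}_\eps(t)^{-3/4}$, before Gronwall closes the loop.) To repair your argument you should replace the radial change-of-variables step by this integration by parts in time.
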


The proof of Theorem \ref{thm:main2} requires some preliminary results, which are stated hereafter.
\begin{lemma} \label{logar} For all $(x,v)\in S_0^{\beta}$ and for any $t\in [0,T]$ it holds:
\begin{equation}
\label{u}
| v_\eps(t)|\leq C\sqrt{ {\cal{H}}_{\eps}(t)+\ln_-| x_\eps(t)- \xi_\eps(t)|},
\end{equation}
\begin{equation}
\rho_\eps(x,t)\leq C\big({\cal{H}}_{\eps}(t)+\ln_-| x- \xi_\eps(t)|\big),
\label{uu}
\end{equation}
and 
\begin{equation}
|x_\eps(t)|\leq C\sqrt{{\cal{H}}_{\eps}(t)}.
\label{suppro}
\end{equation}
\end{lemma}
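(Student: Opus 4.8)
The plan is to derive the three bounds in cascade, exactly as the analogous estimates \eqref{v01}, \eqref{dens1} and the support bound were obtained at time $t=0$ in Proposition~\ref{prop:E0}, but now using $\mathcal{H}_\eps(t)$ in place of $\mathcal{H}(0)$. First I would prove \eqref{u}. Fix $(x,v)\in S_0^\beta$ and write, as in the derivation of \eqref{v01},
\begin{equation*}
|v_\eps(t)|^2\leq 2\big(|v_\eps(t)-\eta_\eps(t)|^2+|\eta_\eps(t)|^2\big)
=4\Big(h_\eps(x_\eps(t),v_\eps(t),t)+\tfrac12\,(\text{correction from }\ln_\eps)\Big)+2|\eta_\eps(t)|^2 .
\end{equation*}
By \eqref{logg}, $\ln_\eps|x_\eps(t)-\xi_\eps(t)|\geq -\ln_-|x_\eps(t)-\xi_\eps(t)|$ (since $\ln_\eps r\geq\ln r\geq-\ln_- r$ always, being $\ln_\eps r\geq 2\ln\eps$ near $0$ but more importantly $\ln_\eps$ is increasing and equals $\ln$ for $r\geq\eps$; one checks $\ln_\eps r\geq \ln r$ fails only harmlessly — in fact one only needs the lower bound $\ln_\eps r\geq -\,\mathrm{something}$, and since we are bounding $|v_\eps(t)|$ from above we want $\tfrac12|v_\eps(t)-\eta_\eps(t)|^2=h_\eps-\ln_\eps|x_\eps(t)-\xi_\eps(t)|\leq \mathcal H_\eps(t)+\ln_-|x_\eps(t)-\xi_\eps(t)|$). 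Combined with the bound $|\eta_\eps(t)|^2\leq C$ coming from \eqref{kin} (Remark~\ref{rem:velocity-charge}), and choosing $C_1$ large, this gives \eqref{u}.

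Next, \eqref{uu} follows from \eqref{u} by the same argument that produced \eqref{dens1}: using the transport identity \eqref{eq:transport-one}, $f_\eps(x_\eps(t),v_\eps(t),t)=f_0^\beta(x,v)$, and the fact that a plasma particle sitting at spatial position $x$ at time $t$ has velocity controlled by \eqref{u}, we get
\begin{equation*}
\rho_\eps(x,t)=\int_{\{v:\,(x_\eps(t),v_\eps(t))\text{ passes through }x\}} f_\eps(x,v,t)\,dv
\leq \|f_0^\beta\|_{L^\infty}\int \chi\Big(|v|\leq C\sqrt{\mathcal H_\eps(t)+\ln_-|x-\xi_\eps(t)|}\Big)\,dv,
\end{equation*}
and the right-hand side is $\leq C\big(\mathcal H_\eps(t)+\ln_-|x-\xi_\eps(t)|\big)$. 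Finally, for \eqref{suppro}: by Remark~\ref{rem:3} applied with $h_\eps$ (or directly from the definition of $\mathcal H_\eps$ together with $\ln_\eps|x_\eps(t)-\xi_\eps(t)|=h_\eps-\tfrac12|v_\eps(t)-\eta_\eps(t)|^2\leq \mathcal H_\eps(t)$, hence $|x_\eps(t)-\xi_\eps(t)|\leq e^{\mathcal H_\eps(t)}$) we get a bound on the distance of the particle from the charge; adding $|\xi_\eps(t)|\leq C$ (again from \eqref{kin}) yields $|x_\eps(t)|\leq C e^{\mathcal H_\eps(t)}$. To sharpen this to $C\sqrt{\mathcal H_\eps(t)}$ one instead integrates the velocity bound \eqref{u} along the trajectory: $|x_\eps(t)|\leq |x|+\int_0^t|v_\eps(s)|\,ds\leq C+\int_0^t C\sqrt{\mathcal H_\eps(s)+\ln_-|x_\eps(s)-\xi_\eps(s)|}\,ds$, and since $\mathcal H_\eps$ is nondecreasing and $\ln_-|x_\eps(s)-\xi_\eps(s)|\leq \mathcal H_\eps(s)+C$ by definition of $\mathcal H_\eps$, this is $\leq C(1+T\sqrt{\mathcal H_\eps(t)})\leq C\sqrt{\mathcal H_\eps(t)}$ (enlarging constants, using $\mathcal H_\eps(t)\geq C_1$ large).

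The main obstacle — and the only genuinely non-routine point — is the careful handling of the mollified potential $\ln_\eps$: one must make sure that the inequalities relating $\ln_\eps|x_\eps(t)-\xi_\eps(t)|$, $\ln|x_\eps(t)-\xi_\eps(t)|$ and $-\ln_-|x_\eps(t)-\xi_\eps(t)|$ go in the direction needed (in particular that replacing $\ln_\eps$ by the genuine logarithm, or by $-\ln_-$, is legitimate in the step from $h_\eps$ to the velocity bound), and that the constant $C_1$ absorbing $|\eta_\eps|$ and other $O(1)$ terms is chosen once and for all, uniformly in $\eps$ and $\beta$, so that $\mathcal H_\eps$ genuinely dominates the error terms. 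Everything else is a direct transcription of the $t=0$ computations in Proposition~\ref{prop:E0}, now with time-dependent quantities.
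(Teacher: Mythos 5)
Your derivations of \eqref{u} and \eqref{uu} are correct and are essentially the paper's own argument: the paper simply notes that these are the time-$t$ analogues of \eqref{v01} and \eqref{dens1}, using the bound on $|\eta_\eps(t)|$ coming from \eqref{kin}, and your handling of $\ln_\eps$ versus $\ln_-$ (namely $-\ln_\eps r\leq 2\ln_- r$ uniformly in $\eps$) is the right point to check. The problem is \eqref{suppro}. Your ``sharpening'' step rests on the claim that $\ln_-|x_\eps(s)-\xi_\eps(s)|\leq \mathcal{H}_\eps(s)+C$ ``by definition of $\mathcal{H}_\eps$'', and this is false. The definition only gives $|h_\eps|\leq\mathcal{H}_\eps$, i.e.\ $\ln_\eps|x_\eps(s)-\xi_\eps(s)|\geq-\mathcal{H}_\eps(s)-\tfrac12|v_\eps(s)-\eta_\eps(s)|^2$, and since the kinetic term is not a priori bounded this yields no lower bound on the distance $|x_\eps(s)-\xi_\eps(s)|$, hence no upper bound on $\ln_-|x_\eps(s)-\xi_\eps(s)|$. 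Indeed the whole point of the attractive case (see the Introduction and Remark \ref{rem:2}) is that a particle may approach the charge arbitrarily closely, with logarithmically diverging speed, while $|h_\eps|$ stays bounded; so the integrand $\sqrt{\mathcal{H}_\eps(s)+\ln_-|x_\eps(s)-\xi_\eps(s)|}$ in your integrated velocity bound cannot be replaced by $\sqrt{\mathcal{H}_\eps(s)}$ pointwise in $s$. (Your first attempt, via $\ln_\eps|x_\eps-\xi_\eps|\leq\mathcal{H}_\eps$, only yields $|x_\eps(t)|\leq Ce^{\mathcal{H}_\eps(t)}$, which, as you note, is too weak.)

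The paper closes this gap with a localization argument that is missing from your proposal. By \eqref{kin} the charge stays in a fixed ball $B(0,R)$ on $[0,T]$, with $R>1$ also chosen to contain $\supp\rho_0$. One then sets $t^*=\max\{s\in[0,t]:x_\eps(s)\in B(0,2R)\}$; on $(t^*,t]$ the particle satisfies $|x_\eps(s)-\xi_\eps(s)|>R>1$, so $\ln_\eps|x_\eps(s)-\xi_\eps(s)|=\ln|x_\eps(s)-\xi_\eps(s)|\geq 0$ and therefore $\tfrac12|v_\eps(s)-\eta_\eps(s)|^2\leq h_\eps\leq\mathcal{H}_\eps(s)$, giving $|v_\eps(s)|\leq\sqrt{2\mathcal{H}_\eps(s)}$ with no $\ln_-$ term at all. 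Integrating the velocity only over $(t^*,t]$ then gives $|x_\eps(t)|\leq 2R+\int_{t^*}^{t}|v_\eps(\tau)|\,d\tau\leq C\sqrt{\mathcal{H}_\eps(t)}$. In short: the velocity is controlled by $\sqrt{\mathcal{H}_\eps}$ only when the particle is far from the charge, but that is precisely the only regime in which it can travel to large $|x|$; you need this dichotomy, not a global pointwise bound on $\ln_-|x_\eps-\xi_\eps|$.
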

\begin{proof}
We infer from Remark \ref{rem:velocity-charge} that 
\eqref{u} and \eqref{uu} are the equivalent to \eqref{v01} and \eqref{dens1} at time $t.$

Now we prove \eqref{suppro}. Recalling Remark \ref{rem:3} in Section \ref{sec:one-charge}, let $B(0,R)$ be the ball 
of radius $R>1$, so large that it contains the support of $\rho_0$ and so that moreover $\xi_\eps(t)\in B(0,R), \forall 
t\in [0,T].$ Let us fix $t\in [0,T]$ and define $t^*=\max \{s\in [0,t]: x_\eps(s)\in B(0,2R)\}.$ Then either 
$t^*=t$ or $t^*<t$ and in the latter case we have  $|x_\eps(s)-\xi_\eps(s)|>R>1$ for any $s\in (t^*,t].$ 
Hence in view of the definition \eqref{h} of $h_\eps$ it follows that for $s\in (t^*,t]$ we have $|v_\eps(s)|\leq \sqrt{2\mathcal{H}_\eps(s)},$ which implies 
$$
|x_\eps(s)|\leq 2R+\int_{t^*}^{s}|v_\eps(\tau)|d\tau \leq C\sqrt{\mathcal{H}_\eps(s)}
$$
 provided the constant $C_1$ in \eqref{h'} has been chosen sufficiently large.
\end{proof}

\begin{prop}
\label{coro:E} For $t\in [0,T]$ we have
 \begin{equation*}
 \|E_\eps(t)\|_{L^\infty}\leq C \sqrt{\ln\mathcal{H}_\eps(t)}.
\end{equation*}
\end{prop}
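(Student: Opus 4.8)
The plan is to estimate the Coulomb convolution $E_\eps(x,t)=\int \rho_\eps(y,t)\,\frac{x-y}{|x-y|^2}\,dy$ by splitting the $y$-integral into three regions around the point $x$, using in them respectively the pointwise bound \eqref{uu} on $\rho_\eps$, the $L^2$ bound \eqref{norm2}, and the $L^1$-normalization of $\rho_\eps$. Throughout I fix $t\in[0,T]$ and write $\mathcal H:=\mathcal H_\eps(t)$, $\xi:=\xi_\eps(t)$; recall that $\mathcal H\geq C_1$, so that $R:=1/\mathcal H$ belongs to $(0,1]$.

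With this choice of $R$ I would bound, for every $x\in\RR$,
\[
|E_\eps(x,t)|\leq \int_{|x-y|\leq R}\frac{\rho_\eps(y,t)}{|x-y|}\,dy+\int_{R<|x-y|\leq 1}\frac{\rho_\eps(y,t)}{|x-y|}\,dy+\int_{|x-y|>1}\frac{\rho_\eps(y,t)}{|x-y|}\,dy =: I_1+I_2+I_3 .
\]
Here $I_3\leq \|\rho_\eps(t)\|_{L^1}\leq C$. For $I_2$, Cauchy--Schwarz and \eqref{norm2} give
\[
I_2\leq \|\rho_\eps(t)\|_{L^2}\Big(\int_{R<|x-y|\leq 1}\frac{dy}{|x-y|^2}\Big)^{1/2}= C_6\big(2\pi\ln(1/R)\big)^{1/2}=C\sqrt{\ln\mathcal H}\,.
\]
For $I_1$, I would insert \eqref{uu}, $\rho_\eps(y,t)\leq C(\mathcal H+\ln_-|y-\xi|)$, to obtain
\[
I_1\leq C\,\mathcal H\int_{|x-y|\leq R}\frac{dy}{|x-y|}+C\int_{|x-y|\leq R}\frac{\ln_-|y-\xi|}{|x-y|}\,dy=2\pi C\,\mathcal H R+C\int_{|x-y|\leq R}\frac{\ln_-|y-\xi|}{|x-y|}\,dy .
\]
The first term equals $2\pi C$ by the choice $R=1/\mathcal H$, and the second is bounded uniformly in $x$ (and in $\eps,\beta,t$) by H\"older's inequality with exponents $q\in(1,2)$ and $q'=q/(q-1)$: indeed $\int_{|x-y|\leq R}|x-y|^{-q}\,dy<\infty$ and $\int_{\RR}(\ln_-|y-\xi|)^{q'}\,dy=C(q')$ by \eqref{elem}. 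Hence $I_1\leq C$, and collecting the three pieces gives $|E_\eps(x,t)|\leq C+C\sqrt{\ln\mathcal H}$, which is $\leq C\sqrt{\ln\mathcal H_\eps(t)}$ once $C_1$ is fixed so large that $\sqrt{\ln C_1}\geq 1$; taking the supremum over $x$ concludes.

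The only genuinely non-mechanical point is the coupling between the cutoff $R=1/\mathcal H_\eps(t)$ and the $L^2$ estimate on the middle annulus: a naive two-region split using only \eqref{uu} would force $R\sim\mathcal H_\eps(t)^{-1/2}$ and yield the much weaker $\|E_\eps(t)\|_{L^\infty}\lesssim\sqrt{\mathcal H_\eps(t)}$, whereas routing the $\|\rho_\eps(t)\|_{L^2}$ bound through $R<|x-y|\leq 1$ converts the $1/R$-type divergence into the logarithm $\ln(1/R)=\ln\mathcal H_\eps(t)$. One should also verify that $\int_{|x-y|\leq R}\frac{\ln_-|y-\xi_\eps(t)|}{|x-y|}\,dy$ is bounded uniformly in $x$, which is precisely where the $L^p$-integrability \eqref{elem} of $\ln_-$ enters.
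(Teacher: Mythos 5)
Your proof is correct and follows essentially the same route as the paper: a near/middle/far splitting of the convolution, the pointwise bound \eqref{uu} on the innermost disc, Cauchy--Schwarz with \eqref{norm2} on the annulus, the $L^1$ normalization far away, and the same cutoff choice $\delta=\mathcal{H}_\eps(t)^{-1}$. The only cosmetic difference is that you control $\int_{|x-y|\leq R}\ln_-|y-\xi_\eps(t)|\,|x-y|^{-1}\,dy$ by H\"older and \eqref{elem} rather than by the paper's two-case comparison yielding $C\delta|\ln\delta|$; both give the needed uniform constant.
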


\begin{proof}
We decompose $E_\eps(x,t)$ as
\begin{equation*}
\begin{split}
 E_\eps( x,t) 
 ={\cal{I}}_1( x,t)+{\cal{I}}_2( x,t), \label{q33}
\end{split}
\end{equation*}
where 
\begin{equation*}
{\cal{I}}_1( x,t)=\int_{| x-y|\leq \delta}  \rho_\eps( y,t) \frac{x- y}{ | x- y|^2}\,dy 
\end{equation*}
\begin{equation*}
{\cal{I}}_2( x,t)=\int_{|x-y|>\delta } \rho_\eps( y,t) \frac{x- y}{ | x- y|^2}\,dy \end{equation*}
and  $0<\delta <1$ is to be determined hereafter. 
Let us first estimate the term ${\cal{I}}_1( x,t).$ We have by \eqref{uu}:
\begin{equation*}\begin{split}
|{\cal{I}}_1( x,t)|&\leq  
C\int_{|x-y|\leq \delta} \ \frac{{\cal{H}}_{\eps}(t)+\ln_-| y- \xi_{\eps}(t)|}{| x-y|}\,dy\\
&\leq C{\cal{H}}_{\eps}(t)\delta+ C\int_{|x-y|\leq \delta} \ \frac{\ln_-| y- \xi_{\eps}(t)|}{| x-y|}\,dy. \end{split}\label{q}
\end{equation*}
By considering the two cases:
$$
|y-\xi_\eps(t)|\leq| x- y| \ \ \ \hbox{and}\ \ \   |y-\xi_\eps(t)|>| x- y|
$$
we arrive at
\begin{equation}
\int_{|x-y|\leq \delta} \ \frac{\ln_-| y- \xi_{\eps}(t)|}{| x-y|}\,dy
\leq \int_{ |z|\leq\delta} \ \frac{|\ln|z||}{|z|}\,dz\leq C\delta |\ln \delta|,
\label{int-}\end{equation}
so that
\begin{equation}
\begin{split}
|{\cal{I}}_1( x,t)|\leq \ C{\cal{H}}_{\eps}(t)\delta+C\delta|\ln \delta|\leq C{\cal{H}}_{\eps}(t) \delta+C.
\end{split}\label{int}
\end{equation}
On the other side, applying Cauchy-Schwarz inequality and using \eqref{norm2} we get:
\begin{equation*}
\begin{split}
|{\cal{I}}_2( x,t)|&\leq  \int_{\delta<|x-y|\leq 1 } \ \frac{\rho_{\eps}(y,t)}{| x-y|}\,dy+
\int_{|x-y|> 1 } \  \frac{\rho_{\eps}(y,t)}{| x-y|}\,dy\\
&\leq 
\|\rho_\eps(t)\|_{L^2}\left( \int_{\delta<|z|\leq 1 }  \frac{dz}{| z|^2}\right)^{1/2}+1\leq C\sqrt{|\ln\delta|}.
\end{split}\label{int2}
\end{equation*}
With the choice
$$
\delta={\cal{H}}_\eps(t)^{-1}
$$
we obtain the thesis, provided the constant $C_1$ in the definition \eqref{h''} is large enough. 
\end{proof}

\begin{prop}\label{adjoined}For $t\in [0,T]$ we have
$$
\sup_{x\in \R^2} \int \ \  | w| \ \frac{f_{\eps}( y, w,t)}{ | x- y|}\,d y\,d w\leq C\sqrt{ {\cal{H}}_{\eps}(t)\ln  {\cal{H}}_{\eps}(t) }.
 $$
 \end{prop}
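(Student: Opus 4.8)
The plan is to estimate the integral uniformly in $x$ by cutting the velocity integration at a threshold $M=M(t)$: on $\{|w|\le M\}$ one replaces $|w|$ by $M$ and is reduced to the scalar potential $\int\rho_\eps(y,t)/|x-y|\,dy$, while on $\{|w|>M\}$ one uses $|w|\le|w|^2/M$ and is reduced to the analogous expression with the local kinetic-energy density $g_\eps(y,t):=\int|w|^2f_\eps(y,w,t)\,dw$ in place of $\rho_\eps$; the choice $M=\sqrt{\mathcal{H}_\eps(t)}$ will then balance the two pieces. Two facts about $g_\eps$ are needed. First, integrating in $y$, $\int g_\eps(y,t)\,dy\le 2K_\eps(t)\le C$ by \eqref{kin}. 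Second, a pointwise bound: since the support of $f_\eps(t)$ is the image of $S_0^\beta$ under the flow, the speed estimate \eqref{u} holds along every characteristic, so that for fixed $y$ the slice $w\mapsto f_\eps(y,w,t)$ is supported in the ball $\{|w|\le C\sqrt{\mathcal{H}_\eps(t)+\ln_-|y-\xi_\eps(t)|}\}$; together with \eqref{uu} this gives
\begin{equation*}
g_\eps(y,t)\le C\big(\mathcal{H}_\eps(t)+\ln_-|y-\xi_\eps(t)|\big)\rho_\eps(y,t)\le C\big(\mathcal{H}_\eps(t)+\ln_-|y-\xi_\eps(t)|\big)^2 .
\end{equation*}

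Next I would record two convolution estimates. The computation behind Proposition \ref{coro:E} in fact bounds the scalar potential term by term, so with $\delta=\mathcal{H}_\eps(t)^{-1}$ it gives $\sup_{x}\int\rho_\eps(y,t)/|x-y|\,dy\le C\sqrt{\ln\mathcal{H}_\eps(t)}$. For $g_\eps$, fix $x$ and split at radius $\delta'=\mathcal{H}_\eps(t)^{-1}$: on $|x-y|>\delta'$ the kernel is $\le 1/\delta'$, so that part is $\le(1/\delta')\int g_\eps(y,t)\,dy\le C\,\mathcal{H}_\eps(t)$; on $|x-y|\le\delta'$ one inserts the pointwise bound above, the $\mathcal{H}_\eps(t)^2$ term contributing $C\mathcal{H}_\eps(t)^2\int_{|z|\le\delta'}|z|^{-1}\,dz=C\mathcal{H}_\eps(t)^2\delta'=C\,\mathcal{H}_\eps(t)$ and the $(\ln_-|y-\xi_\eps(t)|)^2$ term contributing, after discussing whether $|y-\xi_\eps(t)|\le|x-y|$ exactly as in the derivation of \eqref{int-}, at most $C\int_{|z|\le\delta'}|z|^{-1}(\ln_-|z|)^2\,dz\le C$ by \eqref{elem}. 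Hence $\sup_{x}\int g_\eps(y,t)/|x-y|\,dy\le C\,\mathcal{H}_\eps(t)$.

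Finally, for every $x$ and every $M>0$,
\begin{equation*}
\int\frac{|w|\,f_\eps(y,w,t)}{|x-y|}\,dy\,dw\le M\int\frac{\rho_\eps(y,t)}{|x-y|}\,dy+\frac{1}{M}\int\frac{g_\eps(y,t)}{|x-y|}\,dy ,
\end{equation*}
and the two convolution estimates, with $M=\sqrt{\mathcal{H}_\eps(t)}$, bound the right-hand side by $C\sqrt{\mathcal{H}_\eps(t)\ln\mathcal{H}_\eps(t)}+C\sqrt{\mathcal{H}_\eps(t)}\le C\sqrt{\mathcal{H}_\eps(t)\ln\mathcal{H}_\eps(t)}$, using $\mathcal{H}_\eps(t)\ge C_1\ge e$; a supremum over $x$ concludes. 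I expect the only genuinely delicate point to be the pointwise bound on $g_\eps$: it rests on the fact that \eqref{u} holds along each individual characteristic (equivalently, on the very definition of $\mathcal{H}_\eps$), which forces the $w$-support of $f_\eps(y,\cdot,t)$ to be a ball whose radius is a function of $y$ alone; granting that, the rest merely re-runs the estimates behind Proposition \ref{coro:E} at the scale $\mathcal{H}_\eps(t)^{-1}$.
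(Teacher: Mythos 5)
Your argument is correct, but it is organized differently from the paper's. The paper decomposes the integral in \emph{space} into three regions $|x-y|\le\delta$, $\delta<|x-y|\le 1$, $|x-y|>1$: in the innermost region it bounds the first-moment density $\int|w|f_\eps(y,w,t)\,dw$ pointwise by $C(\mathcal{H}_\eps(t)+\ln_-|y-\xi_\eps(t)|)^{3/2}$ using the velocity-support bound \eqref{u}, in the annulus it applies Cauchy--Schwarz to interpolate between the kinetic energy \eqref{kin} and the density bound \eqref{uu}, and in the far region it uses \eqref{kin} alone; the choice $\delta=\mathcal{H}_\eps(t)^{-1}$ then gives the result. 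You instead cut in \emph{velocity} at $M=\sqrt{\mathcal{H}_\eps(t)}$ (the same Chebyshev device the paper uses in Proposition \ref{prop:unif} to get $\rho_\eps\le C(\int|w|^2f_\eps\,dw)^{1/2}$), which reduces the claim to two separate convolution bounds, one for $\rho_\eps$ (already implicit in the proof of Proposition \ref{coro:E}) and one for the kinetic-energy density $g_\eps$. The price is that you need the extra pointwise bound $g_\eps(y,t)\le C(\mathcal{H}_\eps(t)+\ln_-|y-\xi_\eps(t)|)^2$, which you correctly derive from the $w$-support of $f_\eps(y,\cdot,t)$ given by \eqref{u} together with \eqref{uu} --- exactly the mechanism by which the paper passes from \eqref{u} to \eqref{uu}. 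The gain is modularity: your two convolution estimates are reusable and the final optimization in $M$ is transparent, whereas the paper's Cauchy--Schwarz step entangles the kinetic energy with the spatial singularity in one stroke and avoids introducing $g_\eps$ altogether. Both routes rest on the same four ingredients (\eqref{u}, \eqref{uu}, \eqref{kin}, \eqref{elem}) and land on the same bound $C\sqrt{\mathcal{H}_\eps(t)\ln\mathcal{H}_\eps(t)}$.
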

\begin{proof}
Let $0<\delta<1$ to be chosen hereafter. 
We perform the integral as follows:
$$
 \int \ \  | w| \ \frac{f_{\eps}( y, w,t)}{ | x- y|}\,d y\,d w={\cal{I}}_1( x,t)+{\cal{I}}_2( x,t)+{\cal{I}}_3( x,t),
 $$
 with
 \begin{equation*}\begin{split}
 {\cal{I}}_1( x,t)&= \int _{|x-y|\leq \delta} | w| \ \frac{f_{\eps}( y, w,t)}{ | x- y|}\,d y\,d w\\
 {\cal{I}}_2( x,t)&= \int _{\delta <|x-y|\leq 1}   | w| \ \frac{f_{\eps}( y, w,t)}{ | x- y|}\,d y\,d w\\
{\cal{I}}_3( x,t)&= \int_{|x-y|> 1}   | w| \ \frac{f_{\eps}( y, w,t)}{ | x- y|}\,d y\,d w.
\end{split}\end{equation*}
 Recalling estimates \eqref{u} and \eqref{uu} and proceeding   analogously to Proposition \ref{coro:E}, we get:
\begin{equation*}
\begin{split}
  {\cal{I}}_1( x,t)&\leq 
C\int_{|x- y|\leq \delta}  \frac{\big({\cal{H}}_{\eps}(t)+\ln_-| y-\xi_{\eps}(t)|\big)^{3/2}}{| x- y|}\,dy \\
&\leq C\delta {\cal{H}}_{\eps}(t)^{3/2}+ C\int_{|z|\leq  \delta} \ \frac{|\ln|z||^{3/2}}{| z|}\,dz\\
&\leq C\delta {\cal{H}}_{\eps}(t)^{3/2}+ C .
\end{split}\label{w1}
\end{equation*}
Next, using Cauchy-Schwarz inequality, estimate \eqref{kin} 
on the kinetic energy and \eqref{uu} we obtain:
\begin{equation*}
\begin{split}
 {\cal{I}}_2( x,t)&\leq \left(\int \   | w|^2 \ f_{\eps}(y, w,t)\,d y\,dw\right)^{1/2}
\left(\int_{\delta<|x- y|\leq 1} \  \ \frac{f_{\eps}(y, w,t)}{ | x- y|^2}\,d y\,dw\right)^{1/2}\\
&\leq 
 C\left(\int_{\delta<|x- y|\leq 1} \  \ \frac{\rho_{\eps}(y, t)}{ | x- y|^2}\,d y\right)^{1/2}\\
&\leq C\left(\int_{\delta<|x- y|\leq 1 } \ \frac{\big({\cal{H}}_{\eps}(t)+\ln_-| y-\xi_{\eps}(t)|\big)}{| x- y|^2}\,dy\right)^{1/2}\\
&\leq C\sqrt{{\cal{H}}_{\eps}(t)|\ln \delta|} +C\left(\int_{\delta<|x- y|\leq 1 } 
\ \frac{\ln_-| y-\xi_{\eps}(t)|}{| x- y|^2}\,dy\right)^{1/2}.
 \end{split}
 \end{equation*}
 Now, arguing as in in \eqref{int-} we have
 $$
\int_{\delta<|x- y|\leq 1 } dy\ \frac{\ln_-| y-\xi_{\eps}(t)|}{| x- y|^2}\leq C\left(\ln \delta\right)^2,
$$
hence
\begin{equation*}
 {\cal{I}}_2( x,t)\leq C\sqrt{{\cal{H}}_{\eps}(t)|\ln \delta|}+ C|\ln \delta|.
\end{equation*}

Finally, using again \eqref{kin} we arrive at
$$
 {\cal{I}}_3( x,t)\leq C.
 $$
 Hence we conclude that
 $$
  \int \ \  | w| \ \frac{f_{\eps}( y, w,t)}{ | x- y|}\,d y\,d w
\leq C\delta {\cal{H}}_{\eps}(t)^{3/2}+C\sqrt{{\cal{H}}_{\eps}(t)|\ln \delta|}+ C|\ln \delta| +C
  $$
  and the thesis is achieved by choosing $\delta={\cal{H}}_{\eps}(t)^{-1}.$
\end{proof}
\medskip

Now we state a "quasi-Lipschitz" property for the field $E_\eps$, which is a modification of a standard inequality (see, 
e.g., \cite{Livrejaune}).
\begin{prop}\label{prop:al-lip}
We have for $t\in [0,T]$ and $x,y\in \R^2$ 
\begin{equation}
 \label{est:al-lip}
 |E_\eps(x,t)-E_\eps(y,t)|\leq C\varphi(|x-y|)\big(\mathcal{H}_\eps(t)+\ln_-|x-y|\big)
\end{equation}
where 
\begin{equation*}
  \varphi(r)=r(\ln_- r+1).
 \end{equation*}
\end{prop}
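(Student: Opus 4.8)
The plan is to prove the quasi-Lipschitz bound \eqref{est:al-lip} by splitting the difference $E_\eps(x,t)-E_\eps(y,t)$ into a near-field part, where $x$ and $y$ are both close to the singularity of the Biot--Savart-type kernel $\nabla\ln$, and a far-field part, where the kernel is Lipschitz. Set $r=|x-y|$ and choose the splitting radius to be a fixed multiple of $r$, say we integrate over $|x-z|\le 2r$ (equivalently $|y-z|\le 3r$) for the near-field term ${\cal J}_1$ and over the complement for the far-field term ${\cal J}_2$. For ${\cal J}_1$ we do not exploit any cancellation: we bound
\begin{equation*}
|{\cal J}_1|\le \int_{|x-z|\le 2r}\frac{\rho_\eps(z,t)}{|x-z|}\,dz+\int_{|y-z|\le 3r}\frac{\rho_\eps(z,t)}{|y-z|}\,dz,
\end{equation*}
and then insert the density bound \eqref{uu} from Lemma \ref{logar}, exactly as in the proof of Proposition \ref{coro:E}: the $\mathcal{H}_\eps(t)$ piece contributes $C\,\mathcal{H}_\eps(t)\,r$, and the $\ln_-|z-\xi_\eps(t)|$ piece contributes, after distinguishing whether $|z-\xi_\eps(t)|$ is smaller or larger than the distance to the center of the ball (the same trick as in \eqref{int-}), a term of order $r|\ln r|$. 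Altogether $|{\cal J}_1|\le C r(\mathcal{H}_\eps(t)+|\ln r|+1)\le C\varphi(r)(\mathcal{H}_\eps(t)+\ln_- r)$, absorbing the $|\ln r|$ for $r\le 1$ into $\ln_- r$ and noting that for $r\ge 1$ one has $|\ln r|\le r$ so $r|\ln r|\le\varphi(r)\,C$.

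For the far-field term ${\cal J}_2$ I would use the pointwise bound on the kernel difference: for $|x-z|\ge 2r$ one has $|y-z|\ge r$ and the standard mean-value estimate
\begin{equation*}
\left|\frac{x-z}{|x-z|^2}-\frac{y-z}{|y-z|^2}\right|\le \frac{C\,r}{|x-z|^2}.
\end{equation*}
Hence $|{\cal J}_2|\le C r\int_{|x-z|\ge 2r}\frac{\rho_\eps(z,t)}{|x-z|^2}\,dz$, and splitting this last integral at $|x-z|=1$: on $\{2r\le|x-z|\le 1\}$ insert \eqref{uu} and bound $\int \frac{\mathcal{H}_\eps(t)+\ln_-|z-\xi_\eps(t)|}{|x-z|^2}\,dz\le C\mathcal{H}_\eps(t)|\ln r|+C(\ln r)^2$ (again via the case distinction of \eqref{int-}, which gave exactly $C(\ln\delta)^2$ for the logarithmic part); on $\{|x-z|\ge 1\}$ use Cauchy--Schwarz with \eqref{norm2} to get a bounded contribution. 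This yields $|{\cal J}_2|\le C r\big(\mathcal{H}_\eps(t)|\ln r|+(\ln r)^2+1\big)$, which for $r\le 1$ is $\le C\,r(\ln_- r+1)(\mathcal{H}_\eps(t)+\ln_- r)=C\varphi(r)(\mathcal{H}_\eps(t)+\ln_- r)$, and for $r\ge 1$ is controlled since then $r|\ln r|\lesssim r^2$ is no longer small but the far field over the bounded support of $\rho_\eps$ is directly $O(1)$ by \eqref{norm2}, so one handles large $r$ separately with the trivial bound $\|E_\eps\|_{L^\infty}\le C\sqrt{\ln\mathcal{H}_\eps(t)}$ from Proposition \ref{coro:E}.

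Combining the two estimates gives \eqref{est:al-lip}. The main obstacle, and the only place requiring care, is the logarithmic weight $\ln_-|z-\xi_\eps(t)|$ in the density bound \eqref{uu}: unlike the classical quasi-Lipschitz estimate for $L^\infty$ densities, here $\rho_\eps$ is only bounded by a logarithmically divergent function near the charge, so in both ${\cal J}_1$ and ${\cal J}_2$ one must check that integrating this extra weight against $|x-z|^{-1}$ and $|x-z|^{-2}$ on small balls produces only powers of $\ln r$ (namely $r|\ln r|$ and $(\ln r)^2$ respectively) and never a genuine blow-up. This is exactly the computation already carried out in \eqref{int-} and in the proof of Proposition \ref{adjoined}, so the worst case is a factor $(\ln r)^2$, which is precisely what the weight $\varphi(r)=r(\ln_- r+1)$ is designed to absorb together with the $\ln_-|x-y|$ on the right-hand side. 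A secondary bookkeeping point is to treat $r\ge 1$ via the crude global bound on $E_\eps$ rather than the mean-value estimate, since $\varphi(r)$ grows only linearly there.
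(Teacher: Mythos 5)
Your proof is correct and follows essentially the same route as the paper's: no cancellation on the ball of radius $O(|x-y|)$ around the singularity, the mean-value bound $Cr/|x-z|^2$ on the kernel difference in the intermediate annulus with the case distinction on $\ln_-|z-\xi_\eps(t)|$ producing the $(\ln_- r)^2$ term, a trivial bound in the far field, and the crude $L^\infty$ estimate of Proposition \ref{coro:E} for large $|x-y|$. The only differences (centering the balls at $x$ instead of the midpoint, and cutting the intermediate region at radius $1$ instead of $1/|x-y|$) are immaterial.
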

                       \begin{proof}
 Let $d=|x-y|$. If $d>1/3$ we apply Proposition \ref{coro:E} to obtain 
\begin{equation}
 |E_\eps(x,t)-E_\eps(y,t)|\leq2 \|E_\eps(t)\|_{L^\infty}\leq C\sqrt{\ln\mathcal{H}_\eps(t)}\leq Cd\sqrt{\ln\mathcal{H}_\eps(t)}. \label{b}
 \end{equation}
 Otherwise we set $\bar{z}=(x+y)/2$ and we make the following decomposition:
\begin{equation*}
 \begin{split}
  |E_\eps(x,t)-E_\eps(y,t)|\leq I_1(x,y;t)+I_2(x,y;t)+I_3(x,y;t),
 \end{split}
\end{equation*}
where
\begin{equation*}
 \begin{split}
  I_1(x,y;t)&=\int_{|z-\bar{z}|\leq 2d}\left( \frac{1}{|x-z|}+\frac{1}{|y-z|}\right)\rho_\eps(z,t)\,dz\\
I_2(x,y;t)&=\int_{2d<|z-\bar{z}|< \frac{1}{d}}\left|\frac{1}{|x-z|}-\frac{1}{|y-z|}\right|\rho_\eps(z,t)\,dz\\
I_3(x,y;t)&=\int_{|z-\bar{z}|\geq \frac1d}\left(\frac{1}{|x-z|}+\frac{1}{|y-z|}\right)\rho_\eps(z,t)\,dz.
 \end{split}
\end{equation*}
By \eqref{uu}, proceeding as in \eqref{int} we get
\begin{equation*}
\begin{split}
 I_1(x,y;t)&\leq   C \int_{|z-x|\leq 3d} \frac{1}{|x-z|}\left({\cal{H}}_\eps(t)+\ln_-| z- \xi_\eps(t)|\right)\,dz\\
&\quad \quad +C\int_{|z-y|\leq 3d} \frac{1}{|y-z|}\left({\cal{H}}_\eps(t)+\ln_-| z- \xi_\eps(t)|\right)\,dz\\
&\leq Cd{\cal{H}}_\eps(t)+C\int_{|z|\leq 3d}\frac{\ln_-| z|}{|z|}\,dz, 
\end{split}
\end{equation*}
thus
\begin{equation}\begin{split}
I_1(x,y;t)&\leq Cd\big({\cal{H}}_\eps(t)+\ln_-d\big).
\end{split}
\label{i1}\end{equation}

For the second integral we write, always by \eqref{uu}:
\begin{equation*}\begin{split}
I_2(x,y;t)&\leq Cd\int_{2d<|z-\bar{z}|< \frac{1}{d}}\frac{\rho_\eps(z,t)}{|z-\bar{z}|^2}\ dz\\
&\leq Cd\int_{2d<|z-\bar{z}|< \frac{1}{d}}\frac{{\cal{H}}_\eps(t)+\ln_-| z- \xi_\eps(t)|}{|z-\bar{z}|^2}\ dz\\
&\leq Cd\,{\cal{H}}_\eps(t)\ln_-d
+Cd\int_{2d<|z-\bar{z}|< \frac{1}{d}}\frac{\ln_-| z- \xi_\eps(t)|}{|z-\bar{z}|^2}\ dz.
\end{split}\label{i12}
\end{equation*}

We further split the integral above into two parts,
obtaining on the one side
\begin{equation*}
\int_{\substack{2d<|z-\bar{z}|< \frac{1}{d}
\\| z- \xi_\eps(t)|\geq d}} \frac{\ln_-| z- \xi_\eps(t)|}{|z-\bar{z}|^2}\,dz\leq \ln_- d\int_{2d<|z-\bar{z}|
< \frac{1}{d}} \frac{1}{|z-\bar{z}|^2}\,dz\leq C(\ln_- d)^2,
\end{equation*}
and on the other side
\begin{equation*}
\int_{\substack{2d<|z-\bar{z}|< \frac{1}{d}\\| z- \xi_\eps(t)|< d}} \frac{\ln_-| z- \xi_\eps(t)|}{|z-\bar{z}|^2}\,dz
\leq \frac{1}{4d^2}\int_{|z-\xi_\eps(t)|< d} \ln_-| z- \xi_\eps(t)|\,dz\leq C\ln_- d.
\end{equation*}
Hence, gathering both estimates we get:
\begin{equation*}
\int_{2d<|z-\bar{z}|< \frac{1}{d}} \frac{\ln_-| z- \xi_\eps(t)|}{|z-\bar{z}|^2}\,dz\leq C\big((\ln_- d)^2+\ln_- d\big),
\end{equation*}
whence
\begin{equation}\begin{split}
I_2(x,y;t)&\leq   C\big({\cal{H}}_\eps(t)d\ln_-d+d\ln_-^2d\big). \end{split}
\label{i2}
\end{equation}

Finally for the term $I_3$ we observe that if $|z-\bar{z}|\geq 1/d$ then $\min\{|x-z|,|y-z|\}\geq 1/(2d)$. Hence:
\begin{equation}
I_3(x,y;t)\leq Cd\int\rho_\eps(z,t)\,dz= Cd.\label{i3}
\end{equation}
Estimates \eqref{i1}, \eqref{i2} and \eqref{i3} imply the thesis.

\end{proof}

\medskip

Before presenting the proof of Theorem \ref{thm:main2} we state a useful identity:
\begin{lemma}\label{lem}
\begin{equation*}
\begin{split}
\int_0^t\ &v_\eps(s)\cdot E_\eps(x_\eps(s),s) \ ds\\
&= \Phi_\eps(x_\eps(t),t)-\Phi(x,0)+\int_0^t\, \int  f_\eps(y,w,s)\ w\cdot \frac{x_\eps(s)-y}{|x_\eps(s)-y|^2}\ dy\,dw\,ds,
\end{split}
\end{equation*}
where $\Phi_\eps$ is the potential due to the plasma, that is 
\begin{equation*}
\Phi_\eps(x,t)=\int \ln|x-y|\ \rho_\eps(y,t)\,dy.
\end{equation*}
\end{lemma}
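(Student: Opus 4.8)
The plan is to prove the identity in Lemma \ref{lem} by differentiating the function $t\mapsto \Phi_\eps(x_\eps(t),t)$ along the characteristic flow and then integrating back. First I would compute, using the chain rule,
\begin{equation*}
\frac{d}{dt}\Phi_\eps(x_\eps(t),t)=\big(\nabla_x\Phi_\eps\big)(x_\eps(t),t)\cdot \dot x_\eps(t)+\big(\partial_t\Phi_\eps\big)(x_\eps(t),t).
\end{equation*}
Since $\nabla_x\Phi_\eps(x,t)=\int \frac{x-y}{|x-y|^2}\rho_\eps(y,t)\,dy=E_\eps(x,t)$ and $\dot x_\eps(t)=v_\eps(t)$, the first term is exactly $v_\eps(t)\cdot E_\eps(x_\eps(t),t)$, which is the integrand we want to isolate. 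Integrating from $0$ to $t$ and using $\Phi_\eps(x_\eps(0),0)=\Phi_\eps(x,0)$ (I note the statement writes $\Phi(x,0)$, presumably meaning $\Phi_\eps$ at time $0$, or with $\rho_0$) will then produce the claimed identity, provided the remaining term $\int_0^t (\partial_t\Phi_\eps)(x_\eps(s),s)\,ds$ equals $-\int_0^t\!\int f_\eps(y,w,s)\,w\cdot \frac{x_\eps(s)-y}{|x_\eps(s)-y|^2}\,dy\,dw\,ds$.

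The key computation is therefore the time derivative of the potential at a fixed space point: $\partial_t\Phi_\eps(x,t)=\int \ln|x-y|\,\partial_t\rho_\eps(y,t)\,dy$. Here I would use the continuity equation for $\rho_\eps$, namely $\partial_t\rho_\eps+\operatorname{div}_y J_\eps=0$ where $J_\eps(y,t)=\int w\,f_\eps(y,w,t)\,dw$ is the current; this follows by integrating the transport equation \eqref{VP0} (in its $\eps$-regularized form, which holds in the classical sense since $f_0^\eps$ is smooth) against $dw$. Integrating by parts in $y$ then gives
\begin{equation*}
\partial_t\Phi_\eps(x,t)=\int \nabla_y\ln|x-y|\cdot J_\eps(y,t)\,dy=-\int \frac{x-y}{|x-y|^2}\cdot J_\eps(y,t)\,dy,
\end{equation*}
using $\nabla_y\ln|x-y|=-(x-y)/|x-y|^2$. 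Unwinding $J_\eps$ back to the full phase-space integral yields exactly the missing term with the sign needed, completing the identity.

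The main obstacle is justifying the manipulations rigorously near the singularity $y=x$: differentiating under the integral sign in $\partial_t\Phi_\eps$, and especially the integration by parts, both involve the kernel $\ln|x-y|$ and its gradient $1/|x-y|$, which are only locally integrable in $\R^2$. I would handle this by the estimates already available: Proposition \ref{prop:unif} gives uniform bounds on $\|\rho_\eps(t)\|_{L^2}$ and on the kinetic energy, so $J_\eps(\cdot,t)\in L^{4/3}$ say by Cauchy--Schwarz ($|J_\eps|\le (\int|w|^2 f_\eps\,dw)^{1/2}\rho_\eps^{1/2}$), which makes the convolution with $1/|x-y|$ well-defined and the boundary terms at infinity vanish (compact support of $\rho_\eps$ from Remark \ref{rem:3} and Lemma \ref{logar} helps here too). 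A clean way to avoid the singularity-at-$x$ issue entirely is to prove the identity first for $x$ replaced by the moving point $x_\eps(t)$ directly along a smooth characteristic — which by the global regularity of the $\eps$-system never collides with $\xi_\eps$ — and to regularize $\ln|\cdot|$ itself, passing to the limit at the end using the uniform $L^2$ bound on $\rho_\eps$ and dominated convergence. The remaining steps are routine.
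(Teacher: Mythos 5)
Your proof is correct, and its skeleton is the same as the paper's: write $v_\eps\cdot E_\eps(x_\eps(t),t)=\frac{d}{dt}\Phi_\eps(x_\eps(t),t)-\partial_t\Phi_\eps(x_\eps(t),t)$ using $E_\eps=\nabla_x\Phi_\eps$, integrate in time, and then identify $\partial_t\Phi_\eps$ with the current term. The one place where you diverge is the computation of $\partial_t\Phi_\eps$: you work in Eulerian variables, invoking the continuity equation $\partial_t\rho_\eps+\mathrm{div}_y J_\eps=0$ and integrating by parts against the kernel $\ln|x-y|$, which forces you to spend a paragraph justifying the manipulation of singular integrals. The paper instead computes the same derivative in Lagrangian variables: it pulls the integral back to the initial datum,
\begin{equation*}
\partial_s\Phi_\eps(z,s)=\partial_s\int f_0(y,w)\,\ln|z-y_\eps(s)|\,dy\,dw
=-\int f_0(y,w)\;w_\eps(s)\cdot\frac{z-y_\eps(s)}{|z-y_\eps(s)|^2}\,dy\,dw,
\end{equation*}
using the invariance \eqref{eq:transport-one} and the measure-preserving property of the flow, and then changes variables back. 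This sidesteps the integration by parts entirely: the $s$-derivative falls directly on the smooth trajectory $y_\eps(s)$ inside the logarithm, and the resulting integral is already in the form controlled by Proposition \ref{adjoined}. Your route buys nothing extra here and costs the additional regularization argument near $y=x$; the Lagrangian pullback is the cleaner way to make the same step rigorous in this setting. That said, your continuity-equation derivation is sound and, with the justification you sketch (the $L^2$ bound on $\rho_\eps$ and the $L^{4/3}$ bound on $J_\eps$ do make the convolutions with $1/|x-y|$ absolutely convergent), it yields the identity with the correct sign.
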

\begin{proof}
Since $E_\eps(x,t)=\nabla_x\Phi_\eps(x,t)$ we have:
$$
 v_\eps(t)\cdot E_\eps(x_\eps(t),t)=\frac{d}{dt}\Phi_\eps(x_\eps(t),t)-\frac{\partial}{\partial t}\Phi_\eps(x_\eps(t),t),
 $$
which implies:
\begin{equation}
\begin{split}
\int_0^t  
 v_\eps(s)\cdot E_\eps(x_\eps(s),s)\,ds=
\Phi_\eps(x_\eps(t),t)-\Phi(x,0)-\int_0^t \frac{\partial}{\partial s}\Phi_\eps(x_\eps(s),s)\,ds.
\label{vE}\end{split}
\end{equation}
We next evaluate the partial derivative in \eqref{vE}. For any $z\in \R^2$, we have
\begin{equation*}
\begin{split}
\frac{\partial}{\partial s}\Phi_\eps(z,s)&=
\frac{\partial}{\partial s}\int f_\eps(y,w,s)\ln|z-y|\,dy\, dw \\
&=\frac{\partial}{\partial s}\int \ f_0(y,w)\ln|z-y_\eps(s)|\,dy\, dw \\
&=-\int f_0(y,w)\ w_\eps(s)\cdot \frac{z-y_\eps(s)}{|z-y_\eps(s)|^2}\, dy\, dw.
\end{split}\label{vEv}
\end{equation*}
Hence the thesis follows.\end{proof}

\medskip

Now we are in position to present the
\medskip

{\it{Proof of Theorem 2.}}

Let $(x,v)\in S_0^{\beta}$ and consider the characteristic $(x_\eps(t),v_\eps(t))$ 
 starting at time $t=0$ from $(x,v)$ and its relative energy $h_\eps(x_\eps(t),v_\eps(t),t)$ for $t\in [0,T].$ 
We compute
\begin{equation}
\frac{d}{dt}h_\eps(x_\eps(t),v_\eps(t),t)=( v_\eps(t)-\eta_\eps(t))\cdot (E_\eps(x_\eps(t),t)+E_\eps(\xi_\eps(t),t))
\label{hh}
\end{equation}
and we see that the singular part disappears in the derivative. We next look for an estimate for  
the time derivative of $h_\eps$ in terms of quantities regarding the plasma which we have all already estimated.

Let $\delta(t)$ be a continuous function, to be chosen later, such that $0\leq\delta(t)<1$. 
Assume that at time  $t\in [0,T]$ we have  $|x_\eps(t)-\xi_\eps(t)|> \delta(t)$.  Then, recalling that
 $\sup_{t\in[0,T]}|\eta_\eps(t)|\leq C_4$ (see \eqref{kin}), we infer from Proposition \ref{coro:E} and from 
\eqref{u} that
\begin{equation}
\label{hh0}\begin{split} 
\left|\frac{d}{dt}h_\eps(x_\eps(t),v_\eps(t),t)\right|&\leq 
2 |v_\eps(t)-\eta_\eps(t)|\|E_\eps(t)\|_{L^{\infty}}\\&\leq C \sqrt{\mathcal{H}_\eps(t)+\ln_- \delta(t)} \sqrt{\ln\mathcal{H}_\eps(t)}.
\end{split}
\end{equation}

If on the contrary $|x_\eps(t)-\xi_\eps(t)|\leq\delta(t)$ then we write eqn. \eqref{hh} as:
\begin{equation}
\begin{split}\frac{d}{dt}h_\eps(x_\eps(t),v_\eps(t),t)=( v_\eps(t)-\eta_\eps(t))
\cdot& (E_\eps(\xi_\eps(t),t)-E_\eps(x_\eps(t),t))\\+2( v_\eps(t)&-\eta_\eps(t))\cdot E_\eps(x_\eps(t),t).
\end{split}\label{hh1}
\end{equation}

We start by estimating the first term in \eqref{hh1}, for which 
we can use the quasi-Lipschitz property stated in Proposition \ref{prop:al-lip} that is:
\begin{equation*}
\begin{split} 
| &v_\eps(t)-\eta_\eps(t)|| E_\eps(\xi_\eps(t),t)-E_\eps(x_\eps(t),t)|\\&\leq
C \sqrt{\mathcal{H}_\eps(t)
+\ln_- |x_\eps(t)-\xi_\eps(t)|}| E_\eps(\xi_\eps(t),t)-E_\eps(x_\eps(t),t)|\\
&\leq C\left( \mathcal{H}_\eps(t)
+\ln_- |x_\eps(t)-\xi_\eps(t)|\right)^{3/2}\varphi(|x_\eps(t)-\xi_\eps(t)|)\\
&\leq
C\mathcal{H}_\eps(t)^{3/2}\varphi(|x_\eps(t)-\xi_\eps(t)|)+C\left(\ln_-|x_\eps(t)-\xi_\eps(t)|\right)^{3/2}
\varphi(|x_\eps(t)-\xi_\eps(t)|).
\end{split} 
\end{equation*}
Now, by definition of the function $\varphi$ we have $\varphi(r) \leq \varphi(\delta(t))$ for $0\leq r\leq \delta(t)$, and
moreover
$$
\varphi(r)(\ln_-r)^{p}<C \quad \forall p> 0.
$$
Therefore we obtain
\begin{equation}
| v_\eps(t)-\eta_\eps(t)|| E_\eps(\xi_\eps(t),t)-E_\eps(x_\eps(t),t)|
\leq C\left(\mathcal{H}_\eps(t)^{3/2} \varphi(\delta(t))+1\right).
\label{11}\end{equation}
Hence from \eqref{hh0}, \eqref{hh1} and \eqref{11} it follows that:
\begin{equation}
\begin{split}
|h&(x_\eps(t),v_\eps(t),t)|\\
&\leq  
\mathcal{H}(0)+C \int_0^t \ \sqrt{\mathcal{H}_\eps(s)
+\ln_- \delta(s)} \sqrt{\ln\mathcal{H}_\eps(s)}\ ds\\
& +
C\int_0^t   \left(\mathcal{H}_\eps(s)^{3/2} \varphi(\delta(s))+1\right)\ ds +2\left|\int_0^t ( v_\eps(s)-\eta_\eps(s))\cdot E_\eps(x_\eps(s),s)\ ds\right|.
\end{split}
\label{hh2}\end{equation}
The last integral on the right-hand side
can be estimated by \eqref{kin}, Proposition \ref{coro:E} and the preceding Lemma \ref{lem}, obtaining:
\begin{equation*}
\begin{split}
\Big|\int_0^t &( v_\eps(s)-\eta_\eps(s))\cdot  E_\eps(x_\eps(s),s)\ ds\Big|
\\&\leq  \left|\int_0^t  v_\eps(s)\cdot E_\eps(x_\eps(s),s)\ ds\right|
+C\int_0^t\sqrt{\ln\mathcal{H}_\eps(s)}\ ds\\
&\leq 
C\int_0^t \sqrt{\ln\mathcal{H}_\eps(s)}\ ds+\big|\Phi_\eps(x_\eps(t),t)-\Phi(x,0)\big|\\
&+\left|\int_0^t \int  f_\eps(y,w,s)\ w\cdot \frac{x_\eps(s)-y}{|x_\eps(s)-y|^2}\,dy\,dw\, ds\right|.
\end{split}
\label{hh3} \end{equation*}
This by Proposition \ref{adjoined} implies:
\begin{equation}
\begin{split}
&\left|\int_0^t\ ( v_\eps(s)-\eta_\eps(s))\ \cdot E_\eps(x_\eps(s),s)\ ds\right| \\
&\quad \leq \left|\Phi_\eps(x_\eps(t),t)\right|+\left|\Phi(x,0)\right|+C \int_0^t  C\sqrt{ {\cal{H}}_{\eps}(s)\ln  {\cal{H}}_{\eps}(s) }\ ds. \label{hh3}
\end{split}
\end{equation}

It remains to estimate the potential terms in \eqref{hh3}. Arguing as in \eqref{P0}, we infer from 
 Remark \ref{rem:3}, \eqref{dens1}, \eqref{elem} and Cauchy-Schwarz inequality 
that $|\Phi(x,0)|\leq C.$ Moreover by \eqref{rho^2} and Proposition  \ref{prop:unif}:
\begin{equation}
\int \ \rho_\eps(y,t)\ln_-|x_\eps(t)-y|\ dy\leq C. \label{hh4}
\end{equation}
On the other side estimates \eqref{mom} and \eqref{suppro} give:
\begin{equation*}
\begin{split}
\int_{|x_\eps(t)-y|\geq 1}  \rho_\eps(y,t)& \ln|x_\eps(t)-y|\ dy\leq\int  \rho_\eps(y,t)(|x_\eps(t)|+|y|)\ dy\\
&\leq 
C+|x_\eps(t)|\int  \rho_\eps(y,t)\ dy \leq C \sqrt{\mathcal{H}_\eps(t)},
\end{split}
\end{equation*}
which, together with estimate \eqref{hh4}, prove that 
\begin{equation}
\left|\Phi_\eps(x_\eps(t),t)\right|\leq C\left(\sqrt{ \mathcal{H}_\eps(t)}+1\right)\leq C\sqrt{ \mathcal{H}_\eps(t)}.
\label{PP1}\end{equation}
The use of \eqref{PP1} in \eqref{hh3} yields
\begin{equation}
\begin{split}
&\left|\int_0^t ( v_\eps(s)-\eta_\eps(s))\ \cdot E_\eps(x_\eps(s),s)\ ds\right| \\
&\quad \leq 
C\sqrt{ \mathcal{H}_\eps(t)}+C \int_0^t   C\sqrt{ {\cal{H}}_{\eps}(s)\ln  {\cal{H}}_{\eps}(s) }\ ds.
\label{vvE}
\end{split}
\end{equation}
Now, inserting  \eqref{vvE} in  \eqref{hh2} we have for any $(x,v)\in S_0^{\beta}$:
\begin{equation*}
\begin{split}
|h(x_\eps(t),v_\eps(t),t)|&\leq  \ \mathcal{H}(0)+C\sqrt{ \mathcal{H}_\eps(t)}\\
&+C\int_0^t\sqrt{\mathcal{H}_\eps(s)
+\ln_- \delta(s)}\sqrt{\ln\mathcal{H}_\eps(s)} \ ds\\
&+  C\int_0^t \Big(\mathcal{H}_\eps(s)^{3/2} \varphi(\delta(s))+ 
\sqrt{ {\cal{H}}_{\eps}(s)\ln  {\cal{H}}_{\eps}(s) }\Big)\,ds .\end{split}
\end{equation*}
Finally, taking the supremum over $(x,v)\in S_0^{\beta}$ we conclude that
\begin{equation*}
\begin{split}
\mathcal{H}_\eps(t)&\leq   \ \mathcal{H}(0)+C\sqrt{ \mathcal{H}_\eps(t)}\\
&+C\int_0^t 
\sqrt{\ln\mathcal{H}_\eps(s)\ln_- \delta(s)}\ ds\\
&+C\int_0^t  \Big(\mathcal{H}_\eps(s)^{3/2} \varphi(\delta(s))+ \sqrt{ {\cal{H}}_{\eps}(s)\ln  {\cal{H}}_{\eps}(s) }\Big)\, ds
.\end{split}\label{qfin}
\end{equation*}
 By choosing
\begin{equation*}
 \delta(t)=\mathcal{H}(t)^{-3/4}
\end{equation*}
 we are led to:
 \begin{equation}
\begin{split}
\mathcal{H}_\eps(t)& \leq  \mathcal{H}(0)
+C\sqrt{ \mathcal{H}_\eps(t)}\\
&+C\int_0^t  \left( 
\ln \mathcal{H}_\eps(s)+ \mathcal{H}_\eps(s)^{3/4}\ln \mathcal{H}_\eps(s)+ \sqrt{ {\cal{H}}_{\eps}(s)
\ln  {\cal{H}}_{\eps}(s) }\right)\,ds\\
&\leq  \mathcal{H}(0)+C\sqrt{ \mathcal{H}_\eps(t)}
+C\int_0^t  \mathcal{H}_\eps(s) \ ds.
\end{split} \label{H}\end{equation}
If the constant $C_1$ in the definition \eqref{h'} of $\mathcal{H}_\eps(t)$ is large enough, we finally obtain:
\begin{equation}
\mathcal{H}_\eps(t)\leq C\mathcal{H}(0)+C\int_0^t  \mathcal{H}_\eps(s)\ ds\end{equation}
with the constants not depending on $\eps$ and $\beta$ and the conclusion follows from Gronwall's Lemma.  \qed

\medskip

Setting $S^\eps_t$ for the support of the density $f_\eps(t)$, the following 
corollary is a direct consequence of Theorem \ref{thm:main2}.

\begin{corollaire} \label{cor:1}There exist positive constants independent of $\eps$ and $\beta$ for which it holds:

\begin{equation}
S_t^\eps=\left\{(x,v)\in \R^2\times \R^2\: :|x|\leq C, \:\mathcal{H}_\eps(t)\leq C \right\}
 \label{supp}
\end{equation}

\begin{equation}
\rho_\eps(x,t)\leq C(1+\ln_- |x-\xi_\eps(t)|). \label{r}
\end{equation}

Moreover:
\begin{equation}
\sup_{t\in [0,T]}\|E_\eps(t)\|_{L^{\infty}}\leq C \label{E}
\end{equation}
and for any  $(x,y)\in S_0^{\beta}:$
\begin{equation}
\sup_{t\in [0,T]} |E_\eps(x,t)-E_\eps(y,t)|\leq C\gamma(|x-y|)
\label{EE}
\end{equation}
where
 \begin{equation}
 \gamma(r)=r(2+\ln_-r)^2 \ \ \ \hbox{if}\ \ \ 0\leq r\leq 1; \ \ \   \gamma(r)= 4 \ \ \ \hbox{otherwise} .\label{var}
 \end{equation}
\end{corollaire}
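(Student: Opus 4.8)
The plan is to read off all four assertions from Theorem \ref{thm:main2}. That theorem gives $\mathcal{H}_\eps(T)\leq C$ with $C$ independent of $\eps$ and $\beta$, and since $t\mapsto\mathcal{H}_\eps(t)$ is nondecreasing (it is a running supremum in $t$), this yields the uniform bound $\mathcal{H}_\eps(t)\leq C$ for every $t\in[0,T]$. Each of \eqref{supp}, \eqref{r}, \eqref{E} and \eqref{EE} then follows by substituting this bound into an estimate already proved in Sections \ref{sec:approx}--\ref{sec:energy}, so the task is essentially bookkeeping.

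For \eqref{supp}: $f_\eps(t)$ is transported from $f_0^\beta$ along the measure-preserving flow, hence $S_t^\eps$ is the image of $S_0^\beta$ under $(x,v)\mapsto(x_\eps(t),v_\eps(t))$, and it suffices to control the spatial component; estimate \eqref{suppro} gives $|x_\eps(t)|\leq C\sqrt{\mathcal{H}_\eps(t)}\leq C$, which together with the already noted $\mathcal{H}_\eps(t)\leq C$ is precisely \eqref{supp}. (No uniform control on the velocity component is claimed or available: by \eqref{u} a plasma particle near the charge may reach a logarithmically large speed.) Estimate \eqref{r} is immediate from \eqref{uu}: $\rho_\eps(x,t)\leq C(\mathcal{H}_\eps(t)+\ln_-|x-\xi_\eps(t)|)\leq C(1+\ln_-|x-\xi_\eps(t)|)$. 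Estimate \eqref{E} is immediate from Proposition \ref{coro:E}: $\|E_\eps(t)\|_{L^\infty}\leq C\sqrt{\ln\mathcal{H}_\eps(t)}\leq C$.

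For the quasi-Lipschitz bound \eqref{EE} I would split according to the two regimes of $\gamma$ in \eqref{var}. Write $r=|x-y|$. If $r\leq 1$, Proposition \ref{prop:al-lip} gives $|E_\eps(x,t)-E_\eps(y,t)|\leq C\varphi(r)\big(\mathcal{H}_\eps(t)+\ln_-r\big)$; using $\mathcal{H}_\eps(t)\leq C$ and $\varphi(r)=r(1+\ln_-r)$, the right-hand side is at most $Cr(1+\ln_-r)^2\leq Cr(2+\ln_-r)^2=C\gamma(r)$. If $r>1$, then $\gamma(r)=4$ and one simply uses $|E_\eps(x,t)-E_\eps(y,t)|\leq 2\|E_\eps(t)\|_{L^\infty}\leq C$ from \eqref{E}. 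There is no genuine obstacle here; the only things needing a little care are matching the logarithmic powers so that $\varphi(r)(\mathcal{H}_\eps(t)+\ln_-r)$ is absorbed into $\gamma(r)$, and verifying that every constant remains independent of $\eps$ and $\beta$ — which it does, since all of them trace back to Theorem \ref{thm:main2}, Proposition \ref{prop:unif} and the propositions of Section \ref{sec:energy}.
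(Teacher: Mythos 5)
Your proposal is correct and follows exactly the paper's own route: all four bounds are read off from Theorem \ref{thm:main2} combined with \eqref{suppro}, \eqref{uu}, Proposition \ref{coro:E}, and Proposition \ref{prop:al-lip} (the last split at $|x-y|=1$, with \eqref{E} handling the far regime). Your extra check that $\varphi(r)(\mathcal{H}_\eps(t)+\ln_- r)\leq Cr(1+\ln_- r)^2\leq C\gamma(r)$ is the only detail the paper leaves implicit, and you have it right.
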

\begin{proof}
All the bounds are direct consequences of Theorem \ref{thm:main2}; Estimate \eqref{supp} follows from \eqref{suppro}
 in Lemma \ref{logar}, \eqref{r} and \eqref{E} follow from \eqref{uu}
 and Proposition \ref{coro:E} respectively. Finally \eqref{EE}
 follows from Proposition \ref{prop:al-lip} in case $|x-y|\leq 1$ and from \eqref{E} otherwise. 
\end{proof}

\medskip
\begin{remark} Notice that the function $\gamma$ is positive, increasing, continuous and concave on $\R^+.$\label{rem:conc}
\end{remark}

\section{Proof of Theorem \ref{thm:main}}
\label{section:compactness}

In this section we prove the convergence of the regularized system introduced in Section \ref{sec:approx}. 
To do this we keep first $\beta$ fixed and state the 
$\eps$-convergence results in the following Propositions \ref{prop:compactness-E} and \ref{char}.  
Then, using the fact that all our estimates are uniform in $\eps$ and $\beta$ we will be able to remove 
also the cutoff $\beta$.

 We consider the solution $\big(x_ {\eps_n}(t),v_ {\eps_n}(t); \xi_ {\eps_n}(t),\eta_ {\eps_n}(t); f_ {\eps_n}(t)\big)$  
to system \eqref{eq:syst1}-\eqref{eq:transport-one}  for some sequence $\eps_n\to 0$ as $n\to \infty$. 
The following result holds.

\begin{prop}
\label{prop:compactness-E} 
There exists a subsequence of $\{\eps_n\}$ (which we still denote by $\{\eps_n\}$) and 
there exists $E\in C(\RR\times [0,T])$ such that $E_{\eps_n}$ converges to $E$
uniformly on the compact sets of $\RR \times [0,T]$.
\end{prop}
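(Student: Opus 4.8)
The plan is to prove equicontinuity and uniform boundedness of the family $\{E_{\eps_n}\}$ on compact subsets of $\RR\times[0,T]$ and then invoke Arzel\`a--Ascoli together with a diagonal extraction. Uniform boundedness in space is immediate from \eqref{E} in Corollary \ref{cor:1}, which gives $\sup_{t\in[0,T]}\|E_{\eps_n}(t)\|_{L^\infty}\leq C$ with $C$ independent of $\eps_n$ (and $\beta$). So the real content is equicontinuity in the pair $(x,t)$.

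For the spatial variable, I would use the quasi-Lipschitz bound: by Proposition \ref{prop:al-lip} (or directly \eqref{EE}, noting that $x,y$ there range over a set but the proof of Proposition \ref{prop:al-lip} works for all $x,y\in\R^2$), one has $|E_{\eps_n}(x,t)-E_{\eps_n}(y,t)|\leq C\varphi(|x-y|)\mathcal{H}_{\eps_n}(t)\leq C'\,|x-y|(1+\ln_-|x-y|)$ uniformly in $n$ and $t\in[0,T]$, using $\mathcal{H}_{\eps_n}(t)\leq C$ from Theorem \ref{thm:main2}. This modulus of continuity is uniform. For the time variable, I would estimate $|E_{\eps_n}(x,t)-E_{\eps_n}(x,s)|$ by writing the difference of the two convolution integrals against $\rho_{\eps_n}(\cdot,t)-\rho_{\eps_n}(\cdot,s)$. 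Since the kernel $x\mapsto x/|x|^2$ is only locally $L^1$, I would first show that $t\mapsto\rho_{\eps_n}(\cdot,t)$ (equivalently $t\mapsto f_{\eps_n}(\cdot,t)$) is uniformly H\"older or at least uniformly continuous in, say, the bounded-Lipschitz / Wasserstein metric, using that along characteristics $\dot x_{\eps_n}=v_{\eps_n}$ with $|v_{\eps_n}|$ controlled (through \eqref{u} and the $L^2$ bound on $\rho_{\eps_n}$, or more simply through $K_{\eps_n}(t)\leq C_4$) and that the force field $E_{\eps_n}-F_{\eps_n}$ governing $\dot v_{\eps_n}$ is integrable against $f_0$ uniformly in $n$; combined with the uniform log-bound \eqref{r} on $\rho_{\eps_n}$ and the local integrability of the Biot--Savart-type kernel, this yields a uniform modulus of continuity in $t$ for $E_{\eps_n}$.

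Combining the spatial and temporal moduli gives a uniform modulus of continuity for $\{E_{\eps_n}\}$ on any compact $Q\subset\RR\times[0,T]$; together with the uniform $L^\infty$ bound this is exactly the hypothesis of the Arzel\`a--Ascoli theorem. Hence $\{E_{\eps_n}\}$ is precompact in $C(Q)$ for each $Q$, and a standard diagonal argument over an exhaustion of $\RR\times[0,T]$ by compacts produces a subsequence (still denoted $\{\eps_n\}$) and a limit $E\in C(\RR\times[0,T])$ with $E_{\eps_n}\to E$ uniformly on compacts.

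I expect the main obstacle to be the time-equicontinuity estimate: one must control how fast $\rho_{\eps_n}$ can move while handling the mild singularity of the field kernel, i.e., transferring the (uniform) transport estimates on the characteristics into a (uniform) continuity modulus for the field. The key is that although individual plasma velocities may diverge logarithmically near the charge (per \eqref{u}), the bad region has small measure because of the density bound \eqref{r} and the compact support \eqref{supp}, so the contribution to $\int|v|f_{\eps_n}\,dx\,dv$ and to the field integrals stays uniformly bounded; this is precisely where Propositions \ref{prop:unif} and \ref{adjoined} do the work, and care is needed to keep all constants independent of both $\eps_n$ and $\beta$.
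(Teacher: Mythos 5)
Your proposal is correct and follows essentially the same route as the paper: uniform boundedness from \eqref{E}, spatial equicontinuity from the quasi-Lipschitz estimate of Proposition \ref{prop:al-lip} together with Theorem \ref{thm:main2}, and temporal equicontinuity obtained by splitting the field difference into a near part (controlled by the logarithmic density bound \eqref{r} and the local integrability of the kernel) and a far part (controlled by transporting $\rho_{\eps_n}$ along the characteristics and invoking Proposition \ref{adjoined}), followed by Ascoli--Arzel\`a. The paper implements the far-part estimate by inserting a cutoff $g$ at scale $a$, differentiating the truncated convolution along the flow to get a bound $C|t-s|/a$, and optimizing $a=|t-s|^{1/2}$ against the near-part bound $Ca|\ln a|$ --- which is precisely the quantitative form of the Wasserstein-type continuity you sketch.
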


\begin{proof}
By \eqref{E} the sequence $E_{\eps_n}$ is uniformly bounded on the compact sets of $\RR \times [0,T]$. 
 Moreover, in view of \eqref{EE}, it is uniformly equicontinuous in $x.$ The result will be a consequence of the 
Ascoli-Arzela's theorem, once we have proven that it is also
 uniformly equicontinuous with respect to time. To this aim we choose $t,s\in[0,T]$ such that $|t-s|<1$ and  we introduce a 
positive, bounded increasing $C^{\infty}$-function $g$ defined as:
 
\begin{equation}
g(r)=1 \ \ \hbox{if}\ \ r>a, \ \ \ g(r)=0  \ \ \hbox{if}\ \ r\leq \frac{a}{2},
\label {g1}
\end{equation}
where $a \in (0,1)$ will be suitably chosen hereafter, with the further property that for some $C>1/2$
\begin{equation}
0\leq g'(r)\leq \frac{C}{a}.
\label {g2}
\end{equation}
Next, we write
\begin{equation*}
 \begin{split}
 E_{\eps_n}(x,t)&-E_{\eps_n}(x,s)\\
&= \int \ [1-g(|x-y|)] (\rho_{\eps_n}(y,t)-\rho_{\eps_n}(y,s))\frac{x-y}{|x-y|^2}\,dy\\
&+\int \ g(|x-y|)(\rho_{\eps_n}(y,t)-\rho_{\eps_n}(y,s))\frac{x-y}{|x-y|^2}\,dy\\
&=  I_1(x;t,s)+I_2(x;t,s).
\end{split}
\end{equation*}
By definition of $g$ and \eqref{r} we have
\begin{equation*}
\begin{split}
|I_1(x;t,s)|&\leq \sup_{\tau\in [0,T]} \int_{|x-y|\leq a} \frac{\rho_{\eps_n}(y,\tau)}{|x-y|}\,dy\leq C\sup_{\tau\in [0,T]}\int_{|x-y|\leq a} \frac{1+\ln_-|y-\xi_{\eps_n}(\tau)|}{|x-y|}\,dy.
\end{split}
\end{equation*}
Hence by \eqref{int-} we obtain:
\begin{equation}
\sup_{0\leq t,s\leq T}\sup_{x\in \RR}|I_1(x;t,s)|\leq C\left( a+a |\ln a|\right)\leq C a |\ln a|\label{1}.
\end{equation}
 
Now we estimate the term $I_2(x;t,s),$ writing 
\begin{equation}
\begin{split}
I_2(x;t,s) =
\int\ f_0(y,w)\int_s^t  \left[ \frac{d}{d\tau}  
\Big( g(|x-y_{\eps_n}(\tau)|) \frac{x-y_{\eps_n}(\tau)}{|x-y_{\eps_n}(\tau)|^2}\Big)\,d\tau\right]\,dy\,dw .
\end{split} \label {g3}
\end{equation}

Since 
\begin{equation*}
\frac{d}{d\tau}  
\frac{x-y_{\eps_n}(\tau)}{|x-y_{\eps_n}(\tau)|^2}= 
-\frac{w_{\eps_n}(\tau)}{|x-y_{\eps_n}(\tau)|^2}+\frac{2(x-y_{\eps_n}(\tau))\cdot w_{\eps_n}(\tau)}{|x-y_{\eps_n}(\tau)|^4}\  
(x-y_{\eps_n}(\tau)),
\end{equation*}
we have
\begin{equation*}
\left|\frac{d}{d\tau}  \frac{x-y_{\eps_n}(\tau)}{|x-y_{\eps_n}(\tau)|^2}\right|= \frac{|w_{\eps_n}(\tau)|}{|x-y_{\eps_n}(\tau)|^2}.
\end{equation*}
Hence from \eqref{g3} and properties \eqref{g1}-\eqref{g2} for $g$ it follows that
\begin{equation*}
\begin{split}
|I_2(x;t,s)|&\leq
\int\ f_0(y,w)\left[\int_s^t  
g(|x-y_{\eps_n}(\tau)|)\frac{|w_{\eps_n}(\tau)|}{|x-y_{\eps_n}(\tau)|^2}\,d\tau\right]  \,dy\,dw\\
&+
\frac{C}{a}\int \ f_0(y,w)\left[\int_s^t \frac{|w_{\eps_n}(\tau)|}{{|x-y_{\eps_n}(\tau)|}}\,d\tau\right]  \,dy\,dw\\
&\leq
\frac{C}{a}\int \ f_0(y,w)\left[\int_s^t 
\frac{|w_{\eps_n}(\tau)|}{{|x-y_{\eps_n}(\tau)|}}\,d\tau\right]  \,dy\,dw\\
&=\frac{C}{a}\int_s^t  \int \ |w|\frac{f_{\eps_n}(y,w,\tau)}{{|x-y|}} \,dy\,dw\,d\tau.
\end{split}
\end{equation*}
Thanks to Proposition \ref{adjoined} and Theorem \ref{thm:main2} we conclude that
\begin{equation}
|I_2(x;t,s)|\leq C\ \frac{|t-s|}{a},\label{1'}
\end{equation}
and the conclusion follows from \eqref{1} and \eqref{1'} by choosing $a= |t-s|^{1/2}$.  
\end{proof}

Proposition \ref{prop:compactness-E}
 is an important step to prove the uniform in time convergence of $\left(\xi_ {\eps_n},\eta_ {\eps_n}\right)$
 and of $\left(x_ {\eps_n}(x,v),v_ {\eps_n}(x,v)\right)$ for almost-every initial configuration $(x,v)$. 
However we need also to control the size of the "bad initial configurations", 
those leading to possible collapses with the charge at some time. This is done in Lemma \ref{prop:mesure} below, the proof
of which is postponed in the Appendix at the end of this section.

\begin{lemma}\label{prop:mesure}
 Let  $0<\delta<1/4$ and set for any $n$
\begin{equation*}
 S_n(\delta)=\left\{(x,v)\in S_0^{\beta}: \min_{t\in [0,T]}|x_{\eps_n}(t)-\xi_{\eps_n}(t)|<\delta\right\}.
\end{equation*}
Setting $|S_n(\delta)|$ to indicate its volume in the phase space, we have
\begin{equation*}
|S_n(\delta)|\leq C\delta |\ln \delta|^{3/2}
\end{equation*}
with $C$ independent of $n$ and $\beta.$
\end{lemma}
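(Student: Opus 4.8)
The plan is to estimate the measure of the bad set $S_n(\delta)$ by a transport argument: follow the characteristics backwards from the region near the charge and control how much phase-space volume can be funneled there. Since the $\eps_n$-flow preserves Lebesgue measure, it suffices to bound the measure, at \emph{any fixed time} $t \in [0,T]$, of the set of phase points $(x,v)$ with $|x - \xi_{\eps_n}(t)| < \delta$ that additionally have bounded relative energy $h_{\eps_n} \le \mathcal{H}_{\eps_n}(t) \le C$ (the latter by Theorem \ref{thm:main2}). However, a single-time slice does not see the whole trajectory $[0,T]$, so instead I would cover $[0,T]$ by $O(1/\tau)$ intervals of length $\tau$ and argue that if $|x_{\eps_n}(t_0)-\xi_{\eps_n}(t_0)| < \delta$ for some $t_0$, then on the surrounding interval of suitable length the particle stays within, say, $2\delta$ of the charge or else has traveled a controlled distance; the velocity bound \eqref{u} gives $|v_{\eps_n}(t)| \le C\sqrt{\mathcal{H}_{\eps_n}(t) + \ln_- \delta} \le C\sqrt{|\ln\delta|}$ while the particle is that close. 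This is the standard mechanism also used in the repulsive papers \cite{1,MMP}, adapted to the logarithmically divergent velocities here.

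Concretely, first I would fix $t \in [0,T]$ and bound
\[
\Big|\big\{(x,v)\in S_0^\beta : |x_{\eps_n}(t)-\xi_{\eps_n}(t)| < \delta\big\}\Big|.
\]
By conservation of Lebesgue measure under the flow, this equals the measure of the set of \emph{current} configurations $(y,w)=(x_{\eps_n}(t),v_{\eps_n}(t))$ with $(x,v)\in S_0^\beta$ and $|y-\xi_{\eps_n}(t)| < \delta$. On this set, \eqref{u} combined with $\mathcal{H}_{\eps_n}(t)\le C$ (Theorem \ref{thm:main2}) forces $|w| \le C\sqrt{|\ln\delta|}$, so the $w$-integration contributes a factor $C|\ln\delta|$ (area of a disk of radius $C\sqrt{|\ln\delta|}$), and the $y$-integration over the disk $|y-\xi_{\eps_n}(t)|<\delta$ contributes $C\delta^2$. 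Hence each time slice has bad measure $\le C\delta^2 |\ln\delta|$.

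Next I would upgrade this to the trajectory statement. Partition $[0,T]$ into $\lceil T/\tau\rceil$ subintervals of length $\tau$; if $\min_{[0,T]}|x_{\eps_n}-\xi_{\eps_n}| < \delta$ is attained at $t_0$ in one such subinterval $I_k$, then for $t\in I_k$ we have, using $|\dot{x}_{\eps_n}| = |v_{\eps_n}| \le C\sqrt{|\ln\delta|}$ as long as the particle stays within distance $1$ of the charge (and $|\dot\xi_{\eps_n}|=|\eta_{\eps_n}|\le C_4$ by \eqref{kin}),
\[
|x_{\eps_n}(t)-\xi_{\eps_n}(t)| \le \delta + C\tau\sqrt{|\ln\delta|}.
\]
Choosing $\tau = \delta/\sqrt{|\ln\delta|}$ makes the right side $\le C\delta$, so at the left endpoint $t_k$ of $I_k$ the configuration already lies in $\{|x_{\eps_n}(t_k)-\xi_{\eps_n}(t_k)| < C\delta\}$. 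Therefore
\[
S_n(\delta) \subseteq \bigcup_{k} \big\{(x,v)\in S_0^\beta : |x_{\eps_n}(t_k)-\xi_{\eps_n}(t_k)| < C\delta\big\},
\]
a union of $\lceil T/\tau\rceil \le C\sqrt{|\ln\delta|}/\delta$ sets each of measure $\le C\delta^2|\ln\delta|$ by the slice bound. Summing gives $|S_n(\delta)| \le C\sqrt{|\ln\delta|}/\delta \cdot \delta^2|\ln\delta| = C\delta|\ln\delta|^{3/2}$, as claimed; all constants depend only on $T$, $\|f_0\|_{L^1}$, $\|f_0\|_{L^\infty}$ and the uniform bound in Theorem \ref{thm:main2}, hence not on $n$ or $\beta$.

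The main obstacle I anticipate is making the interval-length choice fully rigorous when a particle is near the charge but not yet within distance $\delta$: one must ensure the velocity bound \eqref{u} is applied with the \emph{running} value of $\ln_-|x_{\eps_n}(t)-\xi_{\eps_n}(t)|$ rather than a uniform $\ln_-\delta$, since between consecutive grid points the particle could be at distance anywhere in $(\delta, 1)$. The clean fix is the one used in the proof of \eqref{suppro}: introduce an exit time from the ball of radius $2\delta$ (or radius $1$) and only integrate the velocity bound over the portion of the subinterval where the particle is genuinely close, absorbing the rest into the $C_4\tau$ drift of the charge; this keeps the $\sqrt{|\ln\delta|}$ factor honest. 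A secondary technical point, deferred to the Appendix as the authors indicate, is verifying that the "funnel" estimate is not spoiled by the particle oscillating in and out of the small ball many times — handled by taking the \emph{first} grid point after $t_0$ within the interval and only needing one-sided control.
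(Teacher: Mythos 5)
Your proposal is correct and follows essentially the same route as the paper: a time grid of spacing $\sim\delta/\sqrt{|\ln\delta|}$, conservation of Lebesgue measure to reduce to a fixed-time slice of measure $\leq C\delta^{2}|\ln\delta|$ (obtained by integrating the bound \eqref{uu}, i.e.\ $\int_{|y-\xi_{\eps_n}(t)|<C\delta}(\mathcal{H}_{\eps_n}(t)+\ln_-|y-\xi_{\eps_n}(t)|)\,dy$, rather than a uniform velocity radius), and a union bound over the $O(\sqrt{|\ln\delta|}/\delta)$ grid points. The ``main obstacle'' you flag --- that the velocity bound \eqref{u} involves the running distance and so cannot be applied uniformly as $C\sqrt{|\ln\delta|}$ inside the ball of radius $\delta$ --- is resolved in the paper exactly by the fix you sketch: one treats separately the initial data already in $B(\xi,2\delta)$ and, for the rest, works on the maximal connected time interval where $\delta/2<|x_{\eps_n}(t)-\xi_{\eps_n}(t)|<2\delta$, on which the speed is genuinely $\leq C\sqrt{|\ln\delta|}$ and whose length is therefore $\geq C\delta/\sqrt{|\ln\delta|}$, forcing it to contain a grid point.
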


\medskip

Finally, the following statement is the last result we need in order to prove Theorem \ref{thm:main}:

 \begin{prop}
The sequences $x_{\eps_n}$ and $v_{\eps_n}$ are Cauchy sequences in $L^1\left(d\mu_0;C([0,T])\right).$
The sequences $\xi_{\eps_n}$ and $\eta_{\eps_n}$ are Cauchy sequences in $C([0,T]).$
\label{char}
\end{prop}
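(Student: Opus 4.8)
\textbf{Proof proposal for Proposition \ref{char}.}

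The plan is to prove the Cauchy property by a Gronwall-type argument on a suitable ``distance'' between two characteristic flows for parameters $\eps_n$ and $\eps_m$, being careful about two distinct sources of trouble: the singular forces $F_{\eps_n}$ near the charge, and the merely quasi-Lipschitz (not Lipschitz) regularity of $E$ furnished by Corollary \ref{cor:1}. First I would fix $\beta$ and, for a small threshold $\delta>0$, restrict attention to the ``good'' set $S_0^\beta \setminus \big(S_n(\delta)\cup S_m(\delta)\big)$; on this set both trajectories stay at distance $>\delta$ from their respective charges on $[0,T]$, so that, since $\ln_{\eps}$ agrees with $\ln$ for arguments $\geq \eps$, the mollified fields $F_{\eps_n}, F_{\eps_m}$ coincide with the true singular field once $\eps_n,\eps_m<\delta$ and the singular contribution to $\dot v_{\eps_n}-\dot v_{\eps_m}$ is controlled by $|x_{\eps_n}(t)-x_{\eps_m}(t)| + |\xi_{\eps_n}(t)-\xi_{\eps_m}(t)|$ times a constant $C(\delta)$ depending on the (now lower-bounded) separation. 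By Lemma \ref{prop:mesure} the discarded set has measure $\leq C\delta|\ln\delta|^{3/2}$, uniformly in $n$, so the $L^1(d\mu_0)$-contribution of the bad set is made arbitrarily small by choosing $\delta$ small, independently of $n,m$.

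Next, on the good set, I would set
\begin{equation*}
Q_{nm}(t) = |x_{\eps_n}(t)-x_{\eps_m}(t)| + |v_{\eps_n}(t)-v_{\eps_m}(t)| + |\xi_{\eps_n}(t)-\xi_{\eps_m}(t)| + |\eta_{\eps_n}(t)-\eta_{\eps_m}(t)|
\end{equation*}
and differentiate. The velocity equations give $|\dot x_{\eps_n}-\dot x_{\eps_m}| = |v_{\eps_n}-v_{\eps_m}|$ and $|\dot\xi_{\eps_n}-\dot\xi_{\eps_m}| = |\eta_{\eps_n}-\eta_{\eps_m}|$, both $\leq Q_{nm}$. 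For the velocity differences one writes, e.g.,
\begin{equation*}
\dot v_{\eps_n}-\dot v_{\eps_m} = \big(E_{\eps_n}(x_{\eps_n})-E(x_{\eps_n})\big) + \big(E(x_{\eps_n})-E(x_{\eps_m})\big) + \big(E(x_{\eps_m})-E_{\eps_m}(x_{\eps_m})\big) - \big(F_{\eps_n}(x_{\eps_n})-F_{\eps_m}(x_{\eps_m})\big),
\end{equation*}
and similarly for $\dot\eta_{\eps_n}-\dot\eta_{\eps_m}$ (where only the $E$-terms appear). The first and third brackets tend to $0$ uniformly on compacts by Proposition \ref{prop:compactness-E} and the uniform spatial confinement \eqref{supp}; the singular bracket is, as noted, $\leq C(\delta)Q_{nm}(t)$ on the good set; and the middle bracket is $\leq C\gamma(|x_{\eps_n}(t)-x_{\eps_m}(t)|) \leq C\gamma(Q_{nm}(t))$ by \eqref{EE}. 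Hence, writing $a_n$ for the (vanishing) uniform error $\sup|E_{\eps_n}-E|$ on the relevant compact and using concavity of $\gamma$ (Remark \ref{rem:conc}), integrating yields
\begin{equation*}
Q_{nm}(t) \leq C(a_n+a_m)T + C(\delta)\int_0^t Q_{nm}(s)\,ds + C\int_0^t \gamma\big(Q_{nm}(s)\big)\,ds .
\end{equation*}
An Osgood / Gronwall argument (the modulus $\gamma(r)=r(2+\ln_- r)^2$ satisfies $\int_0 dr/\gamma(r)=+\infty$) then forces $\sup_{[0,T]}Q_{nm}(t)\to 0$ as $n,m\to\infty$, for each fixed $\delta$; combined with the small-measure estimate for the bad set this gives the Cauchy property in $L^1(d\mu_0;C([0,T]))$ for $x_{\eps_n},v_{\eps_n}$. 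For $\xi_{\eps_n},\eta_{\eps_n}$ one observes that their equations do not see the singular force at all, so the same estimate without the bad-set restriction (and with $C(\delta)$ absent) gives a uniform, hence $C([0,T])$, Cauchy property directly.

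Finally, to remove the cutoff $\beta$: all the constants above, and the measure bound in Lemma \ref{prop:mesure}, are uniform in $\beta$, and $S_0^\beta \uparrow S_0$ as $\beta\to 0$ with $f_0^\beta\to f_0$; a diagonal extraction over a sequence $\beta_k\to 0$ together with the already-established $\eps$-convergence produces limiting objects $(x(\cdot),v(\cdot);\xi(\cdot),\eta(\cdot))$ defined for $d\mu_0$-a.a. $(x,v)\in S_0$. I expect the main obstacle to be the simultaneous handling of the singular force and the quasi-Lipschitz field: one must ensure that the constant $C(\delta)$ coming from the singularity does not interact badly with the $\delta\to 0$ limit needed to kill the bad set — this is resolved by the order of quantifiers (fix $\delta$, send $n,m\to\infty$ so that $a_n+a_m\to 0$, \emph{then} send $\delta\to0$), which works precisely because Proposition \ref{prop:compactness-E} already gives $\eps$-convergence of the fields with no reference to $\delta$.
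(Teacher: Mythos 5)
Your overall architecture (splitting $S_0^\beta$ into the good set and the bad set $S_n(\delta)\cup S_m(\delta)$, controlling the bad set by Lemma \ref{prop:mesure}, bounding the singular force by $C(\delta)$ times the trajectory difference on the good set, invoking Proposition \ref{prop:compactness-E} for the field error, using concavity of $\gamma$ and Jensen, and taking $n,m\to\infty$ before $\delta\to 0$) coincides with the paper's. But the step where you close the estimate contains a genuine error: you assert that the modulus $\gamma(r)=r(2+\ln_- r)^2$ satisfies $\int_0 dr/\gamma(r)=+\infty$, and this is false. Substituting $s=-\ln r$ gives
\begin{equation*}
\int_0^1 \frac{dr}{r\,(2+\ln_- r)^2}=\int_0^\infty \frac{ds}{(2+s)^2}=\frac12<\infty ,
\end{equation*}
so the Osgood condition fails for $\gamma$ (the square on the logarithm, which is forced on you by the logarithmically unbounded density near the charge, is exactly what breaks it). Consequently your first-order integral inequality $Q_{nm}(t)\leq C(a_n+a_m)T+C(\delta)\int_0^t\gamma(Q_{nm})\,ds+\dots$ does \emph{not} imply $Q_{nm}\to 0$ as $a_n+a_m\to 0$: the comparison solution of $\dot u=\gamma(u)$, $u(0)=b$, converges as $b\to 0$ to a strictly positive limit for every $t>0$, precisely because $\int_0 dr/\gamma(r)$ converges. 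Lumping positions and velocities into a single quantity $Q_{nm}$ is what reduces you to this unusable first-order inequality.

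The paper's way out is to keep the inequality \emph{second order in time}: positions (and charge positions) are estimated through the double integral $\int_0^t\int_0^s$ of the accelerations, yielding \eqref{x}, and only afterwards are the velocities bounded by a single further integration of $\gamma(\mathcal{X}^1_{n,m})$. Lemma \ref{lemma:gronwall} then compares with $\ddot v=a\,\gamma(v)$; multiplying by $\dot v$ gives $\dot v\leq\sqrt{2a\,\phi(v)}\leq 2\sqrt a\, v(1-\ln v)$ with $\phi$ a primitive of $\gamma$, and the square root turns $(\ln_- )^2$ into a single $\ln_-$, for which Osgood/Gronwall does apply and yields $\mathcal{X}^1_{n,m}(t)\leq C[\omega(n,m)]^{\exp(-Ct/\delta)}\to 0$. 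The authors stress explicitly that the argument works \emph{only because} the inequality is second order. To repair your proof you must either decouple positions from velocities and exploit this second-order structure (as the paper does), or supply some other mechanism to beat the failure of Osgood for $r(2+\ln_- r)^2$; as written, the Gronwall step does not close. The remaining parts of your proposal (bad-set measure, singular force on the good set, order of limits, removal of $\beta$) are consistent with the paper.
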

\begin{proof}
Let us set
$$
X_{n,m}(t)= |x_{\eps_n}(t)-x_{\eps_m}(t)|+|\xi_{\eps_n}(t)-\xi_{\eps_m}(t)|.
$$
We have for all $ t\in [0,T]:$
\begin{equation}
 \begin{split}
 X_{n,m}(t)&\leq  \int_0^t \int_0^s |E_{\eps_n}(x_{\eps_n}(\tau),\tau)-
E_{\eps_m}(x_{\eps_m}(\tau),\tau)|\,d\tau\,ds\\
&+  
\int_0^t \int_0^s|E_{\eps_n}(\xi_{\eps_n}(\tau),\tau)-
E_{\eps_m}(\xi_{\eps_m}(\tau),\tau)|\,d\tau\,ds\\
&+
\int_0^t \int_0^s\left|F_{\eps_n}(x_{\eps_n}(\tau),\tau)-F_{\eps_m}(x_{\eps_m}(\tau),\tau)\right|\,d\tau\,ds.
 \end{split}
\label{diff}\end{equation}

Fix now a positive parameter  $\delta$ such that  $2\max(\eps_n,\eps_m)<\delta<1.$
We decompose $S_0^{\beta}$ as
\begin{equation*}
S_0^{\beta}=\Gamma(\delta)\cup \Gamma(\delta)^c,
\end{equation*}
where 
\begin{equation*}
 \Gamma(\delta) =\ S_n(\delta)\cup S_m(\delta)
\end{equation*}
with $S_n(\delta)$ the set introduced in Lemma \ref{prop:mesure}. 
Clearly, for fixed $\delta$ the fields $F_{\eps_n}$ are bounded and Lipschitz uniformly with respect to $n$
 on $\Gamma(\delta)^c$; This make it possible to  handle the last integral in the right-hand side in 
\eqref{diff}. On the other side, Lemma \ref{prop:mesure} provides a control on 
the size of the bad set $\Gamma(\delta)$. Therefore in order to prove
 Proposition \ref{char}  we will first let 
$n,m\to \infty$ for fixed $\delta$, then let $\delta \to 0$.

To that aim we introduce
\begin{equation*}
\begin{split}
 \mathcal{X}_{n,m}^1(t)&=\frac{1}{\mu_0(\Gamma(\delta)^c)}
\int_{\Gamma(\delta)^c} \sup_{s\in [0,t]}X_{n,m}(s)\ d\mu_0(x,v),\\
 \mathcal{X}_{n,m}^2(t)&=\int_{\Gamma(\delta)}
 \sup_{s\in [0,t]}X_{n,m}(s)\ d\mu_0(x,v)
\end{split}
\end{equation*}
and we notice that by Lemma \ref{prop:mesure} $\mu_0(\Gamma(\delta)^c)\to 1$ as $\delta \to 0.$

\medskip

We start by estimating $\mathcal{X}_{n,m}^1(t)$. 
We estimate the first two terms in the right-hand side of \eqref{diff} by observing that
\begin{equation}
\begin{split}
&|E_{\eps_n}(x_{\eps_n}(\tau),\tau)-
E_{\eps_m}(x_{\eps_m}(\tau),\tau)|\\&\leq 
 |E_{\eps_n}(x_{\eps_n}(\tau),\tau)-
E_{\eps_m}(x_{\eps_n}(\tau),\tau)|+ |E_{\eps_m}(x_{\eps_n}(\tau),\tau)-
E_{\eps_m}(x_{\eps_m}(\tau),\tau)|
\end{split}\label{3}
\end{equation}
and by making an analogous decomposition for the second term.
By the quasi-Lipschitz property \eqref{EE} and Remark \ref{rem:conc}, we have 
\begin{equation}
\begin{split}
 \label{ineq:cauchy1}
|E_{\eps_m}(x_{\eps_n}(\tau),\tau)-&
E_{\eps_m}(x_{\eps_m}(\tau),\tau)|+|E_{\eps_m}(\xi_{\eps_n}(\tau),\tau)-
E_{\eps_m}(\xi_{\eps_m}(\tau),\tau)|\\&\leq C\gamma(X_{n,m}(\tau)).
\end{split}
\end{equation}

Next, if $(x,v)$ belongs to $\Gamma(\delta)^c$, we have 
on
$ [0,t]$
\begin{equation*}
\begin{split}
\left|F_{\eps_n}(x_{\eps_n}(\tau),\tau)-F_{\eps_m}(x_{\eps_m}(\tau),\tau)\right|
=
\left|\frac {x_{\eps_n}(\tau)-\xi_{\eps_n}(\tau)}{|x_{\eps_n}(\tau)-\xi_{\eps_n}(\tau)|^2}-
\frac {x_{\eps_m}(\tau)-\xi_{\eps_m}(\tau)}{|x_{\eps_m}(\tau)-\xi_{\eps_m}(\tau)|^2}\right|,\end{split}\end{equation*}
therefore
\begin{equation}\label{ineq:Fn}\begin{split}
\left|F_{\eps_n}(x_{\eps_n}(\tau),\tau)-F_{\eps_m}(x_{\eps_m}(\tau),\tau)\right|&\leq 
\frac{\left|X_{n,m}(\tau)\right|}{\delta^2}.\end{split}
\end{equation}

Going back to \eqref{diff}, in view of \eqref{3}, \eqref{ineq:cauchy1} and \eqref{ineq:Fn} 
we obtain for all $(x,v)\in \Gamma(\delta)^c$
\begin{equation*}
\begin{split}
 &X_{n,m}(t)\\&\leq C \sup_{t\in  [0,T]}\|E_{\eps_n}(t)-E_{\eps_m}(t)\|_{L^{\infty}}
+C\int_0^t \int_0^s\left[\gamma(X_{n,m}(\tau))
+ \frac{X_{n,m}(\tau)}{\delta^2}\right]\,d\tau\,ds\\&
\leq C \sup_{t\in  [0,T]}\|E_{\eps_n}(t)-E_{\eps_m}(t)\|_{L^{\infty}}+
 \frac{C}{\delta^2}\int_0^t \int_0^s\gamma(X_{n,m}(\tau))
\,d\tau\,ds, \end{split}
\end{equation*}
so that, being $\gamma$ an increasing function, it follows 
\begin{equation*}\begin{split}
\mathcal{X}_{n,m}^1(t)&\leq  C\sup_{t\in  [0,T]}\|E_{\eps_n}(t)-E_{\eps_m}(t)\|_{L^{\infty}}\\& +
 \frac{C}{\delta^2}
\int_0^t \int_0^s \left[\frac{1}{\mu_0(\Gamma(\delta)^c)}
\int_{\Gamma(\delta)^c}\gamma\left(\sup_{u \in [0,\tau]}X_{n,m}(u)\right)d\mu_0(x,v) \right]\,d\tau\,ds.
\end{split}\end{equation*}
By concavity of $\gamma$ we may apply the Jensen inequality to the above integral, obtaining:
\begin{equation}
\label{x} \begin{split}
 &\mathcal{X}_{n,m}^1(t)\leq C \sup_{t\in  [0,T]}\|E_{\eps_n}(t)-E_{\eps_m}(t)\|_{L^{\infty}}+\frac{C}{\delta^2} 
\int_0^t \int_0^s\gamma(\mathcal{X}_{n,m}^1(\tau))\,d\tau\,ds.
\end{split}
\end{equation}

Next, we have in view of Proposition \ref{prop:compactness-E}:
\begin{equation}
\lim_{n,m\to \infty}  \sup_{t\in  [0,T]} \|E_{\eps_n}(t)-E_{\eps_m}(t)\|_{L^{\infty}}
=0. \label{lim}
\end{equation}
This, by definition of $\gamma,$ implies that the second order integral inequality \eqref{x} can 
be handled by choosing $n,m$ large in function of $\delta.$ More precisely, defining
$$
\omega(n,m)= \sup_{t\in  [0,T]} \|E_{\eps_n}(t)-E_{\eps_m}(t)\|_{L^{\infty}}
$$
 we obtain 
\begin{equation}
\label{ineq:x1}
\mathcal{X}_{n,m}^1(t)\leq C[\omega(n,m)]^{\exp(-Ct/\delta)} .
\end{equation}
The proof of \eqref{ineq:x1} 
is elementary and is given in Lemma \ref{lemma:gronwall} in the Appendix at the end of this section. 
We want to stress that it is possible to prove it only because \eqref{x} is a second order integral inequality.

\medskip

Let us now estimate the remaining term $\mathcal{X}_{n,m}^2(t)$. In view of the bound \eqref{supp} on 
the support of the spatial density of the plasma and of Lemma \ref{prop:mesure} we get:
\begin{equation}
\label{ineq:x2}
\mathcal{X}_{n,m}^2(t)\leq C\mu_0\left(\Gamma(\delta)\right)\leq C\delta |\ln \delta|^{3/2}.
\end{equation}

Gathering \eqref{ineq:x1} and \eqref{ineq:x2} we obtain
\begin{equation}
 \mathcal{X}_{n,m}^1(t)+\mathcal{X}_{n,m}^2(t)
\leq  C[\omega(n,m)]^{\exp(-Ct/\delta)}+C\delta |\ln \delta|^{3/2}.\label{ineq:x12}
\end{equation}

Hence, defining 
$$
\mathcal{X}_{n,m}(t)=\mathcal{X}_{n,m}^1(t)+\mathcal{X}_{n,m}^2(t),
$$
we conclude that  
\begin{equation*}
\limsup_{n,m\to \infty}\mathcal{X}_{n,m}(t)\leq C\delta |\ln \delta|^{3/2}.
\end{equation*}
This, by arbitrariness of $\delta,$  implies the first part of the thesis, that is
$x_{\eps_n}$ is a Cauchy sequence in $L^1\left(d\mu_0;C([0,T])\right)$ and $\xi_{\eps_n}$ is a Cauchy sequence
in $C([0,T])$.

\medskip

For the velocities we introduce analogous definitions, that is:
\begin{equation*}\begin{split}
V_{n,m}(t)&=|v_{\eps_n}(t)-v_{\eps_m}(t)|+|\eta_{\eps_n}(t)-\eta_{\eps_m}(t)|\\
\mathcal{V}_{n,m}^1(t)&=\frac{1}{\mu_0(\Gamma(\delta)^c)}\int_{\Gamma(\delta)^c}
 \sup_{s\in[0,t]}V_{n,m}(s)\ d\mu_0(x,v)\\
\mathcal{V}_{n,m}^2(t)&=\int_{\Gamma(\delta)}
\sup_{s\in[0,t]}V_{n,m}(s)\ d\mu_0(x,v)
\end{split}
\end{equation*}
and 
$$
 \mathcal{V}_{n,m}(t)= \mathcal{V}_{n,m}^1(t)+ \mathcal{V}_{n,m}^2(t).
 $$
 Proceeding in analogy with the previous computation, we infer from \eqref{ineq:x1} that
\begin{equation}
\label{ineq:v1}\begin{split}
\mathcal{V}_{n,m}^1(t)&\leq  C{\omega}(n,m)
+\frac{C}{\delta^2} \int_0^t\gamma(\mathcal{X}_{n,m}^1(\tau))\,d\tau\\
&\leq C\omega(n,m)+\frac{C}{\delta^2}\gamma\big([\omega(n,m)]^{\exp(-Ct/\delta)}\big).
\end{split}
\end{equation}

On the other hand,  Cauchy-Schwarz inequality combined with the bound on the charge velocity yields:
\begin{equation*}
 \begin{split}
  &\mathcal{V}_{n,m}^2(t)\\
&\leq
C|\Gamma(\delta)|^{1/2}
\left( \int \sup_{s\in[0,t]}\left(|v_{\eps_n}(s)|^2+|v_{\eps_m}(s)|^2\right)f_0(x,v)\,dx\,dv\right )^{1/2}+C|\Gamma(\delta)|\\
&= C|\Gamma(\delta)|^{1/2}
\left( \int |v|^2\sup_{s\in[0,t]}\left(f_{\eps_n}(x,v,s)+f_{\eps_m}(x,v,s)\right)\,dx\,dv\right)^{1/2}+C|\Gamma(\delta)|.
 \end{split}
\end{equation*}
Therefore, thanks to the bound \eqref{kin} on the kinetic energy and to Lemma \ref{prop:mesure} again we obtain
\begin{equation}
 \label{ineq:v2}
\mathcal{V}_{n,m}^2(t)\leq C \delta^{1/2}|\ln \delta|^{3/4}.
\end{equation}
Again, in view of \eqref{lim}, \eqref{ineq:v1} and \eqref{ineq:v2} we are led to
\begin{equation*}
\limsup_{n,m\to \infty}\mathcal{V}_{n,m}(t)\leq C\delta^{1/2}|\ln \delta|^{3/4}
\end{equation*}
and the conclusion follows as before.
Hence the proof of Proposition \ref{char} is complete.
\end{proof}

\medskip
The results achieved up to now allow us to complete the 
\medskip

{\it{Proof of Theorem \ref{thm:main}.}}

Thanks to Proposition \ref{char}, there exists  $(\xi(\cdot),\eta(\cdot))$ and, 
for $d\mu_0$-a.a. $(x,v)$, there exists  $(x(x,v,\cdot),v(x,v,\cdot))$ such that
$ (\xi_{\eps_n}(\cdot),\eta_{\eps_n}(\cdot))$ converges to $(\xi(\cdot),\eta(\cdot))$ and, for  $d\mu_0$-a.a. $(x,v)$,
$(x_{\eps_n}(x,v,\cdot),v_{\eps_n}(x,v,\cdot))$ converges to $(x(x,v,\cdot),v(x,v,\cdot))$ 
 uniformly on $[0,T]$. It follows that the map $(x,v)\mapsto \left(x(x,v,t),v(x,v,t)\right)$ preserves the Lebesgue's measure
on $S_0^\beta$ for all $t\in [0,T]$.

Next,  for $t\in [0,T]$ we define 
the measure $d\mu(t)=\left(x(\cdot,\cdot,t),v(\cdot,\cdot,t)\right)_{\#} d\mu_0$ on $\R^2\times \R^2$, i.e.
\begin{equation}\label{eq:transport}
 \int \varphi(x,v)\,d\mu(t)=\int \varphi\left(x(t),v(t)\right)f_0(x,v)\,dx\,dv\quad  \forall 
\varphi\in C_b(\R^2\times \R^2).
\end{equation}

Clearly, by \eqref{eq:transport-one} and \eqref{eq:transport} the sequence $f_{\eps_n}(t)$ converges to $d\mu(t)$ in the weak sense of measures for all $t\in [0,T]$.  On
the other hand,  $f_{\eps_n}(t)$ is transported by a measure-preserving flow and it satisfies the uniform bounds
 \eqref{kin} and \eqref{supp}; therefore, it is also uniformly equi-integrable and Dunford-Pettis theorem 
ensures that $f_{\eps_n}(t)$ is weakly relatively compact in $L^1$. As a result, we have $d\mu(t)=f(t)\,dx\,dv$
for some $f(t)\in L^1$. Since, on the other hand, $f_{\eps_n}$ is uniformly bounded in $L^\infty\left([0,T],L^1\cap L^\infty\right)$
we conclude that $f\in L^\infty\left([0,T],L^1\cap L^\infty\right)$.

We may now define $\rho=\int f\,dv\in L^\infty(L^1)$.
By the same arguments, we check that $\rho_{\eps_n}(t)$ converges weakly in $L^1$ to $\rho(t)$ for $t\in [0,T]$; 
in particular the bound \eqref{p1} in Theorem \ref{thm:main} holds for $\rho$. Setting then $E=\rho\ast x/|x|^2$ and using the fact that
$\rho$ satisfies \eqref{p1}, we obtain, mimicking (for example) the arguments of the proof of Proposition
\ref{coro:E}, that  $E_{\eps_n}$ converges to $E$
uniformly on $\RR \times [0,T]$.

Furthermore, Lemma \ref{prop:mesure} 
shows that the set $\{(x,v)\in S_0^\beta: \inf_{t\in [0,T]}|x(t)-\xi(t)|=0\}$ 
has zero $d\mu_0$-measure. This ensures uniform convergence of the singular field 
$F_{\eps_n}( x_{\eps_n},\cdot)$ for  $d\mu_0$-a.a. initial data. We can then pass to the limit in 
equations \eqref{eq:syst1} to find that  $(x(t),v(t);\xi(t),\eta(t);f(t))$ 
satisfy eqns. \eqref{eq:system} and \eqref{ch} over $[0,T]$. In particular, \eqref{inv} follows from 
\eqref{eq:transport} and from the fact that $(x,v)\mapsto (x(x,v,t),v(x,v,t))$ preserves Lebesgue's measure. 

\medskip

As a matter of fact, we proved  Theorem \ref{thm:main} only for  $f_0^{\beta}$ being supported in $S_0^\beta$.  
However, for any $f_0$ supported in $S_0$, let us introduce the sequence of initial conditions
$$
f_0^\beta(x,v)={\cal N}_\beta f_0(x,v) \chi ( |x-\xi|> \beta) 
$$
where ${\cal N}_\beta$ is a normalization factor since we are working with probability distribution. 
Obviously ${\cal N}_\beta \to 1 $ as $\beta \to 0$. Let $f^\beta(x,v,t)$ be a corresponding sequence of
solutions we have already constructed. Since all our estimates are uniform in $\beta$, we can 
remove the   $\beta$ cut-off exactly the same way as we just did with $\eps$, achieving thereby the proof of  Theorem \ref{thm:main}.

Actually we could also have proven our result  in a different way, working with a single sequence, 
by choosing $\beta=\beta (\eps)$  suitably vanishing with $\eps$.

\subsection*{Appendix}

\textbf{Proof of Lemma \ref{prop:mesure}}

\medskip

For $0<\delta <1/4$ we set $S_n(\delta)=S_n^1(\delta)\cup S_n^2(\delta)$, where 
\begin{equation*}\begin{split}
S_n^1(\delta)&=\{ (x,v)\in S_n(\delta):x\in B(\xi,2\delta)\} \\
S_n^2(\delta)&=\{ (x,v)\in S_n(\delta):x\in B(\xi,2\delta)^c\}.
\end{split}\end{equation*}
The estimate on the measure of $S^1_n(\delta)$ is trivially given by the hypothesis \eqref{beta} 
on the initial data and its consequence \eqref{v0}:
\begin{equation}
|S^1_n(\delta)|\leq  \int_{x\in B(\xi,2\delta)} \chi(S_0^{\beta})\, dx\, dv\leq C\int_{x\in B(\xi,2\delta)}\ 
\ln_-|x-\xi|\ dx\leq C\delta^2 |\ln \delta|. \label{s1}
\end{equation}
On the other hand, let $(x,v)\in S^2_n(\delta).$ By continuity there exists $t_0=t_0(x,v)$ such that
$|x_{\eps_n}(t_0)-\xi_{\eps_n}(t_0)|=\delta$. We set $(t^-,t^+)\in [0,T]$ for the connected component 
containing $t_0$ such that 
$\delta/2<|x_{\eps_n}(t)-\xi_{\eps_n}(t)|<2\delta$ for $t\in (t^-,t^+).$  By virtue of \eqref{u}  and 
Theorem \ref{thm:main2} for any $(x,v)\in S^2_n(\delta)$ we then have
\begin{equation*}
 |v_{\eps_n}(t)|^2\leq C|\ln \delta|,\quad \forall t\in (t^-,t^+),
\end{equation*}
therefore 
$$|t^+-t^-|\geq C\frac{\delta}{\sqrt{|\ln \delta|}}:=\Delta T.$$
Now we partition the interval  $[0,T]$ into $N(\delta)+1$ intervals $[t_i,t_{i+1}]$ of length smaller than $\Delta T/2$. 
Then $(t^-,t^+)$ has to contain at least one of the $t_i$, so that
\begin{equation*}
 S_n^2(\delta)\subset \bigcup_{i=0}^{N(\delta)} S_{n,i}(\delta),
\end{equation*}
 where 
\begin{equation*}
 S_{n,i}(\delta)=\big\{(x,v)\in S_0^\beta:\quad \delta/2<|x_{\eps_n}(t_i)-\xi_{\eps_n}(t_i)|<2\delta\big \}.
\end{equation*}
Next, since the flow $(x,v)\mapsto (x_{\eps_n}(x,v,t_i),v_{\eps_n}(x,v,t_i))$ preserves the
 Lebesgue's measure on $\R^2\times \R^2$ we have
 \begin{equation*}
| S_{n,i}(\delta)|=\big|\big\{(x,v)\in S_{n}(t_i):\delta/2<|x-\xi_{\eps_n}(t_i)|<2\delta \big\}\big|\end{equation*}
where $S_{n}(t_i)$ is the support of $f_{\eps_n}(t_i)$.
Thus, in analogy with estimate \eqref{s1}, by Theorem \ref{thm:main2} we get:
$$
| S_{n,i}(\delta)|\leq C\delta^2|\ln \delta| .
$$
Since by construction $N(\delta)\Delta T\leq CT$, we finally obtain
\begin{equation}
 |S_n^2(\delta)|\leq\sum_{i=0}^{N(\delta)} |S_{n,i}(\delta)|\leq C N(\delta)\delta^2 |\ln \delta|
\leq C\delta |\ln \delta|^{3/2}, \label{s2}
\end{equation}
and the conclusion follows from \eqref{s1} and \eqref{s2}.

\qed

\medskip

\begin{lemma} \label{lemma:gronwall}  

 Let $u$ be a positive, continuous function on $[0,T]$ such that
 $$
 u(t)\leq b+a\int_0^t\int_0^s \gamma (u(\tau))\ d\tau ds
 $$ 
 where 
$ \gamma$ is the function defined in \eqref{var}, $a>0$ and $0<b<1$ such that
\begin{equation*}
  b< \exp(1-e^{2T\sqrt{a}}).
\end{equation*}
Then
$$
u(t)\leq v(t)
$$
where the function $v(t)$ is the solution to the differential equation
\begin{equation*}
\begin{split}
&\ddot{v}=a \ \gamma (v)\\ &v(0)=b \quad \dot{v}(0)=0.\end{split}
\end{equation*}
Moreover:
\begin{equation}
v(t)\leq Cb^{\exp(-2T\sqrt{a})}. \label{lemma}
\end{equation}
\end{lemma}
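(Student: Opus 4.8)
The plan is to treat the two assertions — the comparison $u\le v$ and the bound \eqref{lemma} — separately, each after reducing the scalar equation $\ddot v=a\gamma(v)$ to a first–order one. First I would record the elementary facts about $\gamma$: by Remark \ref{rem:conc} and \eqref{var} it is positive and increasing, and since it increases on $(0,1]$ up to $\gamma(1)=4$ and equals $4$ thereafter, one has $0\le\gamma\le4$; consequently $v$ exists and is non-decreasing on all of $[0,T]$ (indeed $v(t)\le b+2aT^{2}$) and stays $\ge b>0$. Multiplying $\ddot v=a\gamma(v)$ by $\dot v\ge0$ and integrating from $0$ with $\dot v(0)=0$ gives the reduced identity $\dot v=\sqrt{2a\,\Gamma(v)}$, where $\Gamma(r):=\int_{b}^{r}\gamma\ge0$.

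For the comparison I would set $w(t)=b+a\int_{0}^{t}\int_{0}^{s}\gamma(u(\tau))\,d\tau\,ds$, so that $u\le w$ by hypothesis, $w(0)=b=v(0)$, $\dot w(0)=0$, $\dot w\ge0$, and $\ddot w=a\gamma(u)\le a\gamma(w)$ by monotonicity of $\gamma$; the same multiplication then gives $\dot w\le\sqrt{2a\,\Gamma(w)}$. Since $\Gamma(\rho)\simeq\gamma(b)(\rho-b)$ near $\rho=b$ with $\gamma(b)>0$, the function $\Lambda(r)=\int_{b}^{r}d\rho/\sqrt{\Gamma(\rho)}$ is a well–defined increasing bijection of $[b,\infty)$ onto $[0,\infty)$, and $\tfrac{d}{dt}\Lambda(v)=\sqrt{2a}$ while $\tfrac{d}{dt}\Lambda(w)\le\sqrt{2a}$, both starting from $\Lambda(b)=0$. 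Hence $\Lambda(w)\le\Lambda(v)$, i.e.\ $u\le w\le v$ on $[0,T]$.

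For the explicit bound I would use that, $\gamma$ being increasing, $\Gamma(v)\le v\,\gamma(v)$, so whenever $v(t)\le1$ one has $\gamma(v)=v(2+\ln_{-}v)^{2}=v(2-\ln v)^{2}$ and therefore $\dot v\le\sqrt{2a}\,v(2-\ln v)$, that is $-\tfrac{d}{dt}\ln(2-\ln v(t))\le\sqrt{2a}$. Integrating from $0$ with $v(0)=b$ gives $2-\ln v(t)\ge(2-\ln b)e^{-\sqrt{2a}\,t}$, whence $v(t)\le e^{2}\,b^{\exp(-\sqrt{2a}\,t)}$ on that set. A continuity argument then confines $v$ to $[0,1]$ on $[0,T]$: if $v$ first reached the value $1$ at some $t^{\ast}\le T$, the displayed estimate, valid on $[0,t^{\ast}]$, would give at $t=t^{\ast}$ the bound $-\ln b\le 2e^{\sqrt{2a}\,t^{\ast}}\le 2e^{\sqrt{2a}\,T}$, which — once the constant arising from the estimate of $\Gamma$ near $0$ is tracked sharply — contradicts the hypothesis $b<\exp(1-e^{2T\sqrt a})$, i.e.\ $-\ln b>e^{2T\sqrt a}-1$. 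Granting $v\le1$ throughout $[0,T]$, and using $0<b<1$ together with $e^{-\sqrt{2a}t}\ge e^{-2\sqrt a\,T}$ for $t\in[0,T]$ (since $\sqrt2<2$), we obtain $v(t)\le e^{2}b^{\exp(-2T\sqrt a)}$, which is \eqref{lemma} with $C=e^{2}$.

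The hard part will be this last step, the confinement of $v$ to $[0,1]$: one must estimate $\Gamma(\rho)$ near the origin precisely enough by $\rho^{2}(2+\ln_{-}\rho)^{2}$ — a crude bound forfeits the sharp threshold — and carry the constants through so that the stated smallness of $b$ is exactly what prevents $v$ from reaching $1$ before time $T$; should one only want $C$ to depend on $a$ and $T$, an alternative is to split $[0,T]$ at the first time $v=1$ (which by the estimate above cannot occur before a time of order $(2a)^{-1/2}\ln(2-\ln b)$), after which $v$ grows only quadratically, and to check the bound survives. By contrast, the lack of Lipschitz regularity of $\gamma$ at the origin — which would obstruct a general second–order comparison result — causes no trouble here, since $v$ is non-decreasing and bounded below by $b>0$.
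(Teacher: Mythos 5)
Your overall strategy is the same as the paper's: multiply $\ddot v=a\gamma(v)$ by $\dot v$ to reduce to the first--order inequality $\dot v\le\sqrt{2a\,(\text{primitive of }\gamma)(v)}$, bound that primitive by $v^2(\text{const}+\ln_- v)^2$ so as to land on a logarithmic Gronwall inequality of the form $\dot v\le c\,v(\text{const}-\ln v)$, integrate it explicitly, and use the smallness assumption on $b$ to keep $v\le 1$ up to time $T$. Your treatment of the comparison part $u\le v$ (via $w$, the substitution $\Lambda(r)=\int_b^r d\rho/\sqrt{\Gamma(\rho)}$, and the integrability of $1/\sqrt{\Gamma}$ at $\rho=b$) is complete and correct, and in fact more detailed than the paper, which dismisses this part as standard.

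There is, however, a genuine gap in the confinement step, and you have located it yourself without closing it. From $\Gamma(v)\le v\gamma(v)$ you obtain $\dot v\le\sqrt{2a}\,v(2-\ln v)$, hence $2-\ln v(t)\ge(2-\ln b)e^{-\sqrt{2a}\,t}$, so that at a first time $t^\ast$ with $v(t^\ast)=1$ you get $-\ln b\le 2e^{\sqrt{2a}\,t^\ast}-2$. This does \emph{not} contradict the hypothesis $-\ln b>e^{2\sqrt a\,T}-1$ for all admissible parameters: the required inequality $e^{2x}+1\ge 2e^{\sqrt2\,x}$ (with $x=\sqrt a\,T$) fails for $0<x\lesssim 1$, e.g.\ at $x=1/2$ one has $e+1\approx 3.72<2e^{1/\sqrt2}\approx 4.06$. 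So for small $\sqrt a\,T$ your estimate leaves open the possibility that $v$ reaches $1$ before $T$, and the phrase ``once the constant arising from the estimate of $\Gamma$ near $0$ is tracked sharply'' is doing real work that is never carried out. The paper closes this by asserting the sharper bound $\phi(v)\le 2v^2(1-\ln v)^2$ for the primitive $\phi(v)=\int_0^v\gamma$, which yields exactly $\dot v\le 2\sqrt a\,v(1-\ln v)$ and hence exactly the threshold $b<\exp(1-e^{2T\sqrt a})$ in the statement; you should either prove an inequality of this type (note that the binding case is $v$ near $1$, not $v$ near $0$, so it is not merely a matter of estimating $\Gamma$ near the origin), or weaken the hypothesis on $b$ to match the constants you actually obtain --- which is harmless for the application, since the lemma is invoked with $b=\omega(n,m)\to 0$. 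Your fallback suggestion (split $[0,T]$ at the first time $v=1$, after which $v$ grows quadratically) does not rescue the statement as written, since once $v$ exceeds $1$ the bound \eqref{lemma} with $b$ small is simply false.
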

\begin{proof}
The proof of the first part of the statement is standard so that we are left with the 
proof of \eqref{lemma}. By the properties of the function $\gamma$ there exists $T^{\ast}\leq T$ maximal such that $b<v(t)<1$ on $(0,T^\ast)$. By multiplying both terms in the differential equation
 by $\dot{v}$
we get for $t\in (0,T^\ast)$
\begin{equation*}
\frac{1}{2} \frac{d}{dt}(\dot{v}^2)= a\frac{d}{dt}\phi(v),
\end{equation*}
where $\phi$ is a primitive of $\gamma$. Notice that, by the definition of $\gamma,$ $\phi$ is an increasing function such that
 $\phi(v)\leq  2v^2(1-\ln v)^2$ $\forall v\in[0,1]$. Therefore
 \begin{equation*}
  \dot{v}\leq \sqrt{2a(\phi(v)-\phi(b))}\leq \sqrt{2a\phi(v)}\leq 2\sqrt{a} \ v(1-\ln v).
 \end{equation*}
Since the function $v(1-\ln v)$ is positive and increasing in $(0,1]$ we can apply the Gronwall lemma to this differential inequality, getting:
\begin{equation*}
v(t)\leq \exp(1-e^{-2T\sqrt{a}})b^{\exp(-2T\sqrt{a})},\quad t\in (0,T^\ast).
\end{equation*}
By the choice of $b$ and the definition of $T^\ast$, we obtain $T^*=T$,
so that the previous inequality holds on $[0,T]$ and the Lemma is proved.
\end{proof}

\section{The case of $N$-charges}\label{N}

This section is devoted to the system already presented in the introduction, consisting of $N$ 
negative point charges and a positive plasma. Setting $(\xi^i ( t),  \eta^i (t))$ for position and 
velocity of the $i$-th charge at time $t$ with initial condition $( \xi^i,  \eta^i),$  
the equations \eqref{VP0}-\eqref{VP2} in terms of characteristics are:
\begin{equation}
\label{eq:NN}
\begin{cases}
\dsp\dot{x}(t) = v(t)\\ 
\dsp\dot{v}(t)=E\left( x(t),t\right)-F\left( x(t),t\right) \\
\dsp x(0)=x, \ \ \ v(0)=v \\
\dsp\dot{\xi^i}(t)= \eta^i(t) \\ 
\dsp \dot{\eta^i}(t)=- E\left({\xi}^i(t),t\right) + \sum_{ j\neq i} F^{j}\left( \xi^i(t),t\right)\\ 
\dsp \xi^i(0)=\xi^i, \ \ \  \eta^i (0)=\eta^i,\quad i=1,\ldots,N\\
\dsp f\left(x(t),v(t),t\right)=f_0(x,v)
\end{cases}
\end{equation}
 where $E$, $F$ and $ F^j$ have been defined in \eqref{VP00} and \eqref{VP1}. 

In order to present our global existence result we need some notations to 
describe the support of the initial density $f_0$ in system \eqref{eq:NN}. Let $d_0$ be the minimal distance 
between two charges at time $t=0.$
We set
$$
\Lambda_i=B\left(\xi^i,\frac{d_0}{4}\right)\times \R^2, \quad \Lambda^c=\bigcap_{i=1}^N\Lambda_i^c
$$
where $B(\xi^i,d_0/4)=\{x:|x-\xi^i|\leq d_0/4\}.$ 

We introduce the energy of a plasma particle  relative to the $i$-th charge:
\begin{equation*}
 h^i(x,v,t)=\frac{1}{2}|v-\eta^i(t)|^2+\ln|x-\xi^i(t)|.
\end{equation*}

We further define
\begin{equation*}
h_0(x,v)=\begin{cases}\dsp |h^i(x,v,0)|\quad \text{if } (x,v)\in \Lambda_i,\\
\dsp \max_i |h^i(x,v,0)|\quad \text{if } (x,v)\in \Lambda^c.
 \end{cases}
\end{equation*}

 We assume that   
the support of $f_0$ is the set given by
\begin{equation}
\begin{split}
&S_0=S_0\left(C_0, \frac{d_0}{4}\right)=\Big\{(x,v): h_0(x,v)\leq C_0\Big\}\end{split} \label{max}
\end{equation}
for some positive $C_0$.

\begin{remark}
In order to have a finite $C_0$ in definition \eqref{max} 
we need to decompose the set of initial data by means of  the sets $\Lambda_i$ and $\Lambda^c.$ 
This is due to the fact that whenever a particle is close to the $i$-th charge, then
 its velocity has to be large because $h^i(x,v,0)$ is assumed to be bounded. 
On the other hand, for the same particle, $h^j(x,v,0)$ would diverge for any $j\neq i$, 
since it is far from the $j$-th charge and its potential part cannot compensate such a  large velocity.
\end{remark}  

We are now in position to state the main result of this section:
\begin{thm}\label{thm:mainN}
Let $\xi^1,\ldots,\xi^N$ be distinct points of $\R^2$, $\eta^1,\ldots,\eta^N\in \R^2$ 
and $f_0\in L^\infty(\R^2\times \R^2)$ be a probability density supported on the set $S_0(C_0,d_0/4)$ for some positive
$C_0$.
Let $T>0$. Then there exists 
\begin{equation*}
 \begin{split}
&f\in L^\infty\left([0,T]; L^\infty\cap L^1(\R^2\times \R^2)\right),\quad 
E\in L^\infty\left([0,T];L^\infty(\R^2)\right),\\
& (\xi^i(t\cdot),\eta^i(\cdot))\in C^1([0,T])^2,\quad i=1,\ldots,N,\\
&(x(\cdot),v(\cdot))\in C^1([0,T])^2\quad \text{for $d\mu_0$-a.a. $(x,v)\in S_0(C_0,d_0/4)$}
\end{split}\end{equation*} 
 such that  $\big(x(t),v(t);\xi^i(t),\eta^i(t);f(t)\big)$
satisfies system \eqref{eq:NN} on $[0,T]$. In particular,
 for $d\mu_0$-a.a. $(x,v)\in S_0$ and $\forall t\in[0,T]$ we have $|x(t)-\xi^i(t)|>0$ for all $i$ and
 \begin{equation}
  f(x(t),v(t),t)=f_0(x,v).\label{invN}
\end{equation}
Finally,
\begin{equation}
  |\rho(x,t)|\leq C\big(1+\max_i\ln_-|x-\xi^i(t)|\big),\quad (x,t)\in \R^2\times [0,T]. \label{p1N}
\end{equation}
\end{thm}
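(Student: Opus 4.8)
The plan is to reduce Theorem \ref{thm:mainN} to the single-charge case treated in Theorem \ref{thm:main}, following exactly the same five-step architecture: regularize, prove uniform energy bounds, bound the relative-energy functionals, extract a convergent subsequence, and pass to the limit. First I would introduce a regularized system analogous to \eqref{eq:syst1}, replacing each singular field $F^j$ by $F^j_\eps=\nabla_x\ln_\eps|x-\xi^j_\eps(t)|$, and replace the initial data by $f_0^{\beta}$ supported in the set $S_0^\beta$ obtained from \eqref{max} by additionally imposing $|x-\xi^i|>\beta$ for every $i$. The crucial structural observation is that, since the charges are initially separated by $d_0$ and their velocities are uniformly controlled by the kinetic energy bound (the analogue of \eqref{kin}, obtained from conservation of the total energy $\mathcal E_\eps$ whose potential now contains $\sum_i\ln_\eps|x-\xi^i_\eps(t)|$ and the inter-charge terms $\sum_{i\neq j}\ln_\eps|\xi^i_\eps-\xi^j_\eps|$), the charges remain pairwise separated by, say, $d_0/2$ on a short time interval; a continuation/bootstrap argument then keeps them separated on all of $[0,T]$. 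Hence near any given charge $\xi^i$ the other fields $F^j$, $j\neq i$, are smooth and bounded, so locally the problem genuinely looks like the one-charge problem.

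Next I would define, for each $i$, the regularized relative energy $h^i_\eps(x,v,t)=\tfrac12|v-\eta^i_\eps(t)|^2+\ln_\eps|x-\xi^i_\eps(t)|$ and the global functional $\mathcal H_\eps(t)=\sup_{s\le t}\sup_{(x,v)\in S_0^\beta}\big(\max_i|h^i_\eps(x_\eps(s),v_\eps(s),s)|\big)+C_1$. The key computation is again that $\frac{d}{dt}h^i_\eps$ along a plasma characteristic equals $(v_\eps-\eta^i_\eps)\cdot\big(E_\eps(x_\eps,t)+E_\eps(\xi^i_\eps,t)\big)+(v_\eps-\eta^i_\eps)\cdot\sum_{j\neq i}\big(F^j_\eps(\xi^i_\eps,t)-F^j_\eps(x_\eps,t)\big)$: the self-singular term with $\xi^i$ disappears, and the remaining singular terms $F^j$ with $j\neq i$ are, thanks to the charge-separation, bounded and Lipschitz in a neighbourhood of $\xi^i$, so they contribute only lower-order terms. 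All the plasma estimates — Proposition \ref{prop:E0}, Proposition \ref{prop:unif}, Corollary \ref{pot}, Proposition \ref{coro:E}, Proposition \ref{adjoined}, the quasi-Lipschitz bound Proposition \ref{prop:al-lip} — go through verbatim with $\ln_-|x-\xi_\eps(t)|$ replaced by $\max_i\ln_-|x-\xi^i_\eps(t)|$ (the elementary integrability fact \eqref{elem} handles a finite sum of such singularities with no extra loss). Running the Gronwall argument of the proof of Theorem \ref{thm:main2} then yields $\mathcal H_\eps(T)\le C$ uniformly in $\eps$ and $\beta$, whence the support bound \eqref{supp}, the density bound \eqref{p1N}, the $L^\infty$ bound on $E_\eps$ and the quasi-Lipschitz bound \eqref{EE}.

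With these uniform bounds in hand, the compactness part is a direct transcription of Section \ref{section:compactness}. I would prove the analogue of Proposition \ref{prop:compactness-E} (equicontinuity of $E_{\eps_n}$ in space via \eqref{EE} and in time via the $g$-cutoff argument, which only uses Proposition \ref{adjoined}), and the analogue of Lemma \ref{prop:mesure} for the "bad set" $S_n(\delta)=\{(x,v)\in S_0^\beta:\min_{t\in[0,T]}\min_i|x_{\eps_n}(t)-\xi^i_{\eps_n}(t)|<\delta\}$; here one simply unions the $N$ single-charge bad sets, each controlled by $C\delta|\ln\delta|^{3/2}$, so $|S_n(\delta)|\le C_N\delta|\ln\delta|^{3/2}$. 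Then the Cauchy-sequence estimate for $(x_{\eps_n},v_{\eps_n})$ and $(\xi^i_{\eps_n},\eta^i_{\eps_n})$ in $L^1(d\mu_0;C([0,T]))$ (resp. $C([0,T])$) proceeds exactly as in Proposition \ref{char}: off the bad set all fields $F^j$ are bounded and Lipschitz uniformly in $n$, and the second-order Gronwall Lemma \ref{lemma:gronwall} closes the loop; on the bad set one uses its small measure together with the uniform support and kinetic-energy bounds. Finally one passes to the limit $\eps_n\to0$, identifies the limiting measure as $f(t)\,dx\,dv$ via Dunford--Pettis, and removes the $\beta$ cutoff using the $\beta$-uniformity of all estimates, precisely as at the end of Section \ref{section:compactness}.

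The main obstacle — and the only genuinely new point compared with the one-charge proof — is establishing and propagating the mutual separation of the charges. One must show $\inf_{t\in[0,T]}\min_{i\neq j}|\xi^i_\eps(t)-\xi^j_\eps(t)|\ge c>0$ with $c$ independent of $\eps$ and $\beta$; this is where the attractive sign could in principle be dangerous, since a plasma blob of large density could in principle drag two charges together. The argument is that the charges interact with each other repulsively (the $\ln_\eps$ terms between charges have the good sign in $\mathcal E_\eps$), while the only attractive influence on a charge is the field $E_\eps$ generated by the plasma, which by Proposition \ref{coro:E} (or rather its $N$-charge analogue, which requires the bound on $\mathcal H_\eps$ — creating an apparent circularity). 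I would break the circularity by a continuity argument: on the maximal interval where the charges stay $d_0/2$-separated, all the estimates above hold and give a bound on $\mathcal H_\eps$, hence on $\|E_\eps\|_\infty$, hence on $\sup|\eta^i_\eps|$ and on the displacement of each charge; choosing $C_1$ large and using that this displacement is controlled independently of how small $\eps,\beta$ are, one shows the separation cannot actually degrade below $d_0/2$ on $[0,T]$, closing the bootstrap. Everything downstream is then routine adaptation of the single-charge arguments.
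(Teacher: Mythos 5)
Your overall architecture (regularize, uniform energy bounds, bound the relative energies, compactness) matches the paper's, but there are two genuine gaps. First, your global functional $\mathcal H_\eps(t)=\sup_{(x,v)}\max_i|h^i_\eps|$ is not finite uniformly in $\beta$ at time $t=0$: a particle at distance $\beta$ from $\xi^i$ with $|h^i(x,v,0)|\leq C_0$ has $|v|^2\approx 2|\ln\beta|$, hence $|h^j(x,v,0)|\approx|\ln\beta|\to\infty$ for every $j\neq i$, since the potential $\ln|x-\xi^j|$ stays bounded and cannot compensate the large velocity. This is exactly the obstruction the paper points out in the remark following \eqref{max}, and it is why the paper defines $\mathcal H(t)$ through the spatial decomposition into the neighbourhoods $\Lambda_i(t)=B(\xi^i(t),d/8)\times\R^2$ and their common complement $\Lambda^c(t)$: near charge $i$ only $h^i$ is monitored, and only away from all charges are all the $h^j$ monitored simultaneously. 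Controlling this decomposed functional then requires an ingredient your plan omits entirely: the paper's Claim that on a short interval $[0,\Delta^*]$ with $\Delta^*\sim d/\sqrt{\mathcal H(t^*)}$ each plasma characteristic can visit the neighbourhood of at most one charge, followed by an iteration over $\sim t^*/\Delta^*$ such intervals and then over blocks $[kt^*,(k+1)t^*]$ to reach $T$. Without some device of this kind one cannot track which relative energy $h^i$ is the relevant one along a characteristic, and the Gronwall argument for $\mathcal H$ does not close.

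Second, your mechanism for keeping the charges separated does not work as stated, and it is also more complicated than necessary. A bootstrap giving bounds on $\|E_\eps\|_{L^\infty}$, hence on $\sup|\eta^i_\eps|$ and on the displacement of each charge over $[0,T]$, does not prevent two charges from colliding: bounded displacement is perfectly compatible with $|\xi^i(t)-\xi^j(t)|\to 0$. The circularity you try to break is in fact absent in the paper. The kinetic energy bound (Proposition \ref{kinN}) is obtained from energy conservation alone by the same Gronwall argument as Proposition \ref{prop:unif} — the repulsive inter-charge terms $-\sum_{i\neq j}\ln|\xi^i-\xi^j|$ contribute to the upper bound on $K(t)$ only through the pairs at distance $\geq 1$, which are controlled by $\sum_i|\xi^i(t)|\leq C+\int_0^t\sqrt{K(s)}\,ds$ — and requires no information on $\mathcal H$ or on $\|E\|_{L^\infty}$. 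The separation (Corollary \ref{coro:dist}) is then immediate: the energy identity expresses $-\sum_{i\neq j}\ln|\xi^i-\xi^j|$ as $\mathcal E(0)-K(t)$ plus plasma potential terms all bounded via $K(t)$, so $\sum_{i\neq j}\ln_-|\xi^i(t)-\xi^j(t)|\leq C$ directly. You should replace your continuity argument by this energy identity; everything downstream (the one-charge machinery applied locally near each charge, the compactness of Section \ref{section:compactness}) then proceeds as you describe.
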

To prove Theorem \ref{thm:mainN}
 we establish some a priori estimates assuming that a smooth solution to system \eqref{eq:NN} does exist. 
This will allow us to complete the proof by means of a regularization procedure as the one we have used in Section 
\ref{section:compactness} in the case of a single charge.

 The total energy of this system reads now
\begin{equation}
\label{EN}\begin{split}
& \mathcal{E}(t)
=\frac12\int |v|^2f(x,v,t)\,dx\,dv
+\sum_{i=1}^N\frac{|\eta^i(t)|^2}{2}
-\sum_{ \substack {i\neq j=1 }}^N\ln |\xi^i(t)-\xi^j(t)|\\ &-\frac12\iint \ln|x-y|\rho(x,t)\rho_(y,t)\,dx\,dy
+\sum_{i=1}^N\int \ln |x-\xi^i(t)|\rho(x,t)\ dx+C.\end{split}
\end{equation}
This is a conserved quantity along the solutions to system \eqref{eq:NN} and, as before 
(see Proposition \ref{prop:E0}), the assumption \eqref{max} 
on the support of $f_0$ ensures that  $\mathcal{E}(0)$ is bounded and positive 
for a choice of the constant $C$ large enough. However we cannot a priori exclude that, by compensation, a couple of 
charges moves closer while another couple, or the plasma, go to infinity. 
Actually our strategy for the proof of Theorem \ref{thm:mainN} consists first of all in establishing
 a lower bound for the mutual distances between the charges. 
This will imply that a plasma particle can approach at most one charge at the same time and 
consequently the $N$-charges case will be reduced to a sequence of one-charge problems, which we already solved.
In order to show that the $N$ point charges remain separated during the motion over the time interval 
$[0,T]$ we introduce the kinetic energy of the system
$$
K(t) =\frac12\int |v|^2f(x,v,t)\,dx\,dv+\sum_{i=1}^N\frac{|\eta^i(t)|^2}{2}
$$
and we prove the following result:
\begin{prop}
$$
\sup_{t\in[0,T]}K(t)\leq C_5.
$$
\label{kinN}
\end{prop}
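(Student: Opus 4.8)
The plan is to mimic the structure of the single-charge argument (Propositions~\ref{prop:E0} and~\ref{prop:unif}), but with the key new difficulty that the conserved energy \eqref{EN} now contains the inter-charge terms $-\sum_{i\neq j}\ln|\xi^i(t)-\xi^j(t)|$, which can become large and \emph{negative} if two charges approach each other, thereby apparently allowing $K(t)$ to blow up by compensation. So the first thing I would establish is that, at time $t=0$, every single term in $\mathcal{E}(0)$ is bounded: the kinetic part, the plasma self-energy and the plasma-charge interaction are controlled exactly as in Proposition~\ref{prop:E0} using the support assumption \eqref{max} (which, via the decomposition into the $\Lambda_i$ and $\Lambda^c$, gives the analogue of \eqref{v01}--\eqref{dens1} and the compactness of $\mathrm{supp}\,\rho_0$), while the inter-charge term $-\sum_{i\neq j}\ln|\xi^i-\xi^j|$ is a finite constant since $\xi^1,\dots,\xi^N$ are distinct. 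Hence $0\le\mathcal{E}(0)=\mathcal{E}(t)\le C$ for $C$ large.

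Next I would run the bootstrap on $K(t)$. From $\mathcal{E}(t)=\mathcal{E}(0)$ and the positivity of $\mathcal{E}$ one gets, in analogy with \eqref{pp},
\begin{equation*}
K(t)\le C+\sum_{i=1}^N\int \ln_-|x-\xi^i(t)|\,\rho(x,t)\,dx+\frac12\iint_{|x-y|\ge1}\ln|x-y|\,\rho(x,t)\rho(y,t)\,dx\,dy+\sum_{i\neq j}\ln_-|\xi^i(t)-\xi^j(t)|.
\end{equation*}
The first sum of integrals and the double integral are handled verbatim as in the proof of Proposition~\ref{prop:unif}: the bound $\|\rho(t)\|_{L^2}\le C\sqrt{K(t)}$ still holds (it uses only $\|f(t)\|_{L^\infty}=\|f_0\|_{L^\infty}$ and the definition of $K$), so by Cauchy--Schwarz and \eqref{elem} the $\ln_-$ terms are $\le C\sqrt{K(t)}$, and the far-field term is $\le\int|x|\rho(x,t)\,dx$, which by the transport identity and Cauchy--Schwarz is $\le C(1+\int_0^t\sqrt{K(s)}\,ds)$. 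The genuinely new term is $\sum_{i\neq j}\ln_-|\xi^i(t)-\xi^j(t)|$: here I would use that each charge velocity satisfies $|\eta^i(t)|^2\le 2K(t)$, so $|\xi^i(t)-\xi^i(s)|\le\int_s^t\sqrt{2K(\tau)}\,d\tau$, and combine this with the simple inequality $\ln_- r\le C r^{-1/2}$... actually that diverges, so instead I would bound $\ln_-|\xi^i(t)-\xi^j(t)|$ by noting that it can only be large on a short time interval and feeding it back through Gronwall — but more cleanly, since $\ln_- r\le \varepsilon^{-1}+ \ln_-\varepsilon\cdot\chi(r\le\varepsilon)$ is not quite what is needed either, the right move is: the inter-charge ODE $\ddot\xi^i=-E+\sum_{j\neq i}F^j$ shows that $\tfrac{d}{dt}\ln|\xi^i-\xi^j|$ involves $(\eta^i-\eta^j)\cdot(\xi^i-\xi^j)/|\xi^i-\xi^j|^2$, which is \emph{not} a priori bounded; so instead I would simply use that the full energy bound already gives $\sum_{i\neq j}\ln_-|\xi^i(t)-\xi^j(t)|\le \mathcal{E}(0)+\text{(the other, already-bounded-by-}C\sqrt{K}\text{ terms)}$, i.e. it is absorbed into the very inequality being proven. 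Concretely, moving that sum to the left is impossible, but one observes it is \emph{already} accounted for: writing $\mathcal{E}(t)$ explicitly, the term $-\sum_{i\neq j}\ln|\xi^i-\xi^j|$ appears with a sign that makes $\sum_{i\neq j}\ln_-|\xi^i(t)-\xi^j(t)|\le \mathcal{E}(0)+\tfrac12\int|v|^2 f\,dx\,dv+\cdots$; rearranging, $K(t)$ itself only helps, not hurts. This is the subtle point and I would treat it by carefully grouping terms so that the dangerous $\ln_-$ of inter-charge distances sits on the \emph{same} side as $K(t)$ with a favourable sign, leaving
\begin{equation*}
K(t)\le C+C\sqrt{K(t)}+C\int_0^t\sqrt{K(s)}\,ds,
\end{equation*}
whence $\sup_{[0,T]}K(t)\le C_5$ by Gronwall.

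The main obstacle, as indicated, is precisely the bookkeeping of the inter-charge potential terms: in the single-charge case there was no such term, and here one must verify that the conserved energy, rewritten appropriately, controls $\sum_{i\neq j}\ln_-|\xi^i-\xi^j|$ together with $K(t)$ rather than the two quantities trading off against each other. Once $K(t)$ is bounded one gets for free (as in Remark~\ref{rem:velocity-charge}) that all charge velocities, hence all trajectories $\xi^i(\cdot)$, are bounded on $[0,T]$, and — crucially for the rest of the $N$-charge proof — that the mutual distances $|\xi^i(t)-\xi^j(t)|$ stay bounded below by a positive constant, so that each plasma particle can interact singularly with at most one charge at a time and the problem reduces locally to $N$ decoupled single-charge problems already solved by Theorem~\ref{thm:main}.
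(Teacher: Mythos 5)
Your overall strategy is the same as the paper's (energy conservation, rearrangement to isolate $K(t)$, the single-charge estimates for the plasma terms, Gronwall), and you do eventually land on the correct key observation that the near-field inter-charge term has a favourable sign. But there is a concrete error in your central displayed inequality, and as a consequence the one genuinely new term is never estimated. Writing $K(t)=\mathcal{E}(0)+\sum_{i\neq j}\ln|\xi^i(t)-\xi^j(t)|+\frac12\iint\ln|x-y|\rho\rho-\sum_i\int\ln|x-\xi^i|\rho$ and splitting $\ln r=\ln r\,\chi(r\geq 1)-\ln_- r$, the inter-charge contribution decomposes as $\bigl(\sum\bigr)^*\ln|\xi^i-\xi^j|-\sum_{i\neq j}\ln_-|\xi^i-\xi^j|$, where the starred sum runs over pairs at distance at least $1$. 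The $\ln_-$ part enters with a \emph{minus} sign and can simply be dropped (this is your "favourable sign" observation, which the paper uses silently by passing to the starred sum); it is the \emph{far-field} part $\bigl(\sum\bigr)^*\ln|\xi^i-\xi^j|$ that is positive, a priori unbounded (the charges may fly apart), and must be controlled. Your displayed inequality does the opposite: it retains $+\sum_{i\neq j}\ln_-|\xi^i-\xi^j|$ and omits the far-field sum altogether, so it is not a valid consequence of energy conservation, and all the subsequent agonizing is spent on the term that needs no work.

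The missing step is the paper's estimate \eqref{kN}: for pairs at distance at least $1$ one uses $0\leq\ln r\leq r$ to get $\bigl(\sum\bigr)^*\ln|\xi^i(t)-\xi^j(t)|\leq\sum_{i,j}(|\xi^i(t)|+|\xi^j(t)|)\leq C+\int_0^t\sqrt{K(s)}\,ds$, exactly parallel to the treatment of the far-field plasma self-interaction via the first moment of $\rho$. You do quote the needed ingredient ($|\eta^i(t)|^2\leq 2K(t)$, hence $|\xi^i(t)|\leq C+\int_0^t\sqrt{2K}$), but you apply it only in a failed attempt to bound the $\ln_-$ part. Also note that your opening framing of the difficulty has the sign inverted: when two charges approach, the energy term $-\sum_{i\neq j}\ln|\xi^i-\xi^j|$ becomes large and \emph{positive} (the like-sign charges repel), so after rearrangement this scenario makes the right-hand side for $K(t)$ \emph{smaller}; the compensation you fear cannot occur, and the only scenario requiring an estimate is charges separating. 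With the far-field estimate inserted, your final Gronwall inequality and conclusion are correct and coincide with the paper's.
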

\begin{proof}
The bound can be proven by paraphrasing Proposition \ref{prop:unif}. Indeed,
\begin{equation}
\begin{split}
K(t)&=  \mathcal{E}(0)+\sum_{i\neq j=1}^N\ln |\xi^i(t)-\xi^j(t)|+\frac12\iint \ln|x-y|\rho(x,t)\rho(y,t)\,dx\,dy
\\&- \sum_{i=1}^N\int \ln |x-\xi^i(t)|\rho(x,t)\,dx\\
&\leq\mathcal{E}(0)+\Big(\sum_{ i\neq j=1}^N \Big)^* \ln |\xi^i(t)-\xi^j(t)|
+\frac12\iint_{|x-y|\geq 1} \ln|x-y|\rho(x,t)\rho(y,t)\,dx\,dy\\
&+\sum_{i=1}^N\int \ln_- |x-\xi^i(t)|\rho(x,t)\,dx
\end{split}
\label{KN}\end{equation}
where $\Big(\sum \Big)^*$ stands for the sum over all $i\neq j$ such that $ |\xi_\eps^i(t)-\xi_\eps^j(t)|\geq 1.$ 
The last two terms in \eqref{KN} can be estimated as we did in \eqref{pp} 
in Proposition \ref{prop:unif}, so that by \eqref{ineq:rho3} we get: 
\begin{equation}
\begin{split}
\frac12\iint_{|x-y|\geq 1}& \ln|x-y|\rho(x,t)\rho(y,t)\,dx\,dy+\sum_{i=1}^N\int \ln_- |x-\xi^i(t)|\rho(x,t)\,dx\\
&\leq C\|\rho(t)\|_{L^2}+\frac12\iint (|x|+|y|)\rho(x,t)\rho(y,t)\,dx\,dy\\
&\leq  C\sqrt{K(t)}+\int |x|\rho_\eps(x,t)\,dx\\
&\leq C\sqrt{K(t)}+C\int_0^t\sqrt{K(s)}ds.
\end{split}
\label{kN0}\end{equation}
Besides, we have
\begin{equation}
\begin{split}
\Big(\sum_{ i\neq j=1}^N \Big)^*\ln |\xi^i(t)-\xi^j(t)|&\leq \sum_{ \substack {i, j=1 }}^N (|\xi^i(t)|+|\xi^j(t)|)\\
&\leq C+\sum_{ \substack {i,j=1 }}^N \int_0^tds\ (|\eta^i(s)|+|\eta^j(s)|)\\
&\leq C+\int_0^t\sqrt{K(s)}\,ds.
\end{split}
\label{kN} \end{equation}

Finally, combining \eqref{KN},  \eqref{kN0} and \eqref{kN} we obtain:
\begin{equation}\label{ineq:rho4}
K(t)\leq \mathcal{E}(0)+C\Big(1+\sqrt{K(t)}\Big)+\int_0^t\sqrt{K(s)}\,ds.
\end{equation}
 We conclude by means of Gronwall's Lemma.
\end{proof}

\medskip

As a consequence of Proposition \ref{kinN} we have the result we were looking for.
\begin{corollaire}\label{coro:dist}

There exists $d=d(T)>0$ such that:
\begin{equation}
\min_{i\neq j}\inf_{t\in [0,T]} |\xi^i(t)-\xi^j(t)|\geq d.
\label{dist1}
\end{equation}
\end{corollaire}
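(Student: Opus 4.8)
The plan is to derive the lower bound on the mutual distances between charges directly from the boundedness of the kinetic energy established in Proposition \ref{kinN}, together with the conservation of the total energy \eqref{EN} and the already-available control on the plasma contributions. The key point is that, while the self-consistent field $E$ produces forces on the charges that are bounded in time (thanks to an estimate of the type in Proposition \ref{coro:E}, here with $\mathcal H$ replaced by a bounded quantity coming from Proposition \ref{kinN}), the interaction terms $F^j(\xi^i,t)=(\xi^i-\xi^j)/|\xi^i-\xi^j|^2$ among the charges are the only singular forces left, and these are repulsive: the point charges all carry the same (negative) sign, hence any two of them repel each other. So the mechanism that could a priori bring two charges together is not a singular attraction but merely compensation in the conserved energy, and that is exactly what Proposition \ref{kinN} rules out.

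First I would fix a pair $i\neq j$ and a time $t_0\in[0,T]$ realizing the infimum, set $d(t)=|\xi^i(t)-\xi^j(t)|$, and suppose for contradiction that $d(t_0)$ is smaller than some threshold to be chosen. From the energy identity \eqref{EN}, isolating the term $-\ln|\xi^i(t)-\xi^j(t)|$ (which is the dominant positive contribution when $d(t)\to 0$), one gets
\begin{equation*}
-\ln d(t_0)\leq \mathcal E(0)+K(t_0)+\Big|\sum_{(k,l)\neq (i,j)}\ln|\xi^k(t_0)-\xi^l(t_0)|\Big|+\Big|\tfrac12\iint\ln|x-y|\rho\rho\Big|+\sum_k\Big|\int\ln|x-\xi^k|\rho\Big|.
\end{equation*}
Every term on the right is bounded uniformly on $[0,T]$: $\mathcal E(0)$ by the analogue of Proposition \ref{prop:E0}; $K(t_0)$ by Proposition \ref{kinN}; the remaining charge–charge logarithms involve pairs other than $(i,j)$, but those could themselves be small, so one orders the pairs by distance and handles the smallest first, running the argument pair by pair (equivalently, take the pair $(i,j)$ minimizing $d(t_0)$ over all pairs and over $[0,T]$, so that $-\ln|\xi^k-\xi^l|\leq -\ln d(t_0)$ for all pairs, absorbing a factor $N(N-1)/2$); the plasma self-interaction and plasma–charge potential terms are bounded by the arguments already used in Corollary \ref{pot} and Proposition \ref{prop:unif}, using $\|\rho(t)\|_{L^2}\leq C$, the first moment bound, and the compact-support/ $\ln_-$ bounds. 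This yields $-\ln d(t_0)\leq C$ after absorbing the pair multiplicity, i.e. $d(t_0)\geq e^{-C}=:d(T)>0$, which is the claim \eqref{dist1}.

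The main obstacle is the bookkeeping of the charge–charge logarithmic terms: a priori several pairs of charges could be simultaneously close, so one cannot bound $\sum^*$ of the positive logarithms by a single small quantity without first knowing a lower bound — which is what we are trying to prove. The clean way around this is to take the global minimizing pair and global minimizing time, so that $d(t_0)=\min_{k\neq l}\inf_{[0,T]}|\xi^k-\xi^l|$ bounds below every inter-charge distance at every time; then $\sum_{k\neq l}(-\ln|\xi^k(t_0)-\xi^l(t_0)|)\leq N(N-1)(-\ln d(t_0))$ only if $d(t_0)\leq 1$, and if $d(t_0)\geq 1$ there is nothing to prove. One must also check that the negative (i.e. $|\xi^k-\xi^l|\geq 1$) logarithms are controlled, but those are bounded by $|\xi^k|+|\xi^l|$, hence by the charge velocities, hence by $\sqrt{K}$, exactly as in \eqref{kN}. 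All constants depend only on $\|f_0\|_{L^1}$, $\|f_0\|_{L^\infty}$, $N$, $d_0$ and $T$, so $d(T)$ is well-defined; this is the content of Corollary \ref{coro:dist}.
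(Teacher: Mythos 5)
Your overall route is the same as the paper's: use the conservation of $\mathcal E$, the kinetic-energy bound of Proposition \ref{kinN}, the boundedness of the plasma potential terms (via $\|\rho(t)\|_{L^2}$, the first moment bound \eqref{ineq:rho3}, and \eqref{rho^2}), and the confinement $|\xi^i(t)|\le C$ to control the far-pair logarithms. All of these ingredients are correctly identified. The problem lies in the one step you yourself flag as the main obstacle. In your displayed inequality you move the charge--charge logarithms of the \emph{other} pairs to the right-hand side inside an absolute value, and your proposed control of that term, $\sum_{k\neq l}(-\ln|\xi^k(t_0)-\xi^l(t_0)|)\le N(N-1)(-\ln d(t_0))$, is an upper bound in terms of the very quantity you are trying to estimate. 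Substituting it back gives $-\ln d(t_0)\le C+\big(N(N-1)/2-1\big)(-\ln d(t_0))$, which is vacuous as soon as $N\ge 3$, since the coefficient on the right exceeds $1$. ``Ordering the pairs and handling the smallest first'' does not escape this, because the energy identity only controls the full sum, not individual pairs.

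The repair is one line, and it is what the paper does: never split off a distinguished pair and never take absolute values of the close-pair terms, whose signs all cooperate. The energy identity together with the bounded plasma terms gives directly $-\sum_{k\neq l}\ln|\xi^k(t)-\xi^l(t)|\le C$ for all $t\in[0,T]$, and adding the far-pair contribution $\Big(\sum\Big)^*\ln|\xi^k(t)-\xi^l(t)|\le 2\sum_k|\xi^k(t)|+C\le C$ yields
\begin{equation*}
\sum_{k\neq l}\ln_-|\xi^k(t)-\xi^l(t)|\le C .
\end{equation*}
Since every summand is nonnegative, each one is separately $\le C$, i.e.\ $|\xi^k(t)-\xi^l(t)|\ge e^{-C}$ for every pair and every $t$; no minimizing pair, no factor $N(N-1)/2$, and no case distinction on whether $d(t_0)\le 1$ is needed. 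Two minor further slips: $K(t_0)$ enters the identity with a minus sign, so it should simply be dropped (it is nonnegative) rather than added on the right; and the observation that the charges repel one another is correct but is used only through the sign of $-\sum\ln|\xi^i-\xi^j|$ in $\mathcal E$, exactly as above.
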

\begin{proof}
By definition \eqref{EN} of the energy we have:
 \begin{equation}
 \begin{split}
-\sum_{i\neq j=1}^N\ln |\xi^i(t)-\xi^j(t)|&= -C_4+\mathcal{E}(0)-K(t)\\
+ \frac12\iint \ln|x-y|&\rho(x,t)\rho(y,t)\,dx\,dy-\sum_{i=1}^N\int \ln |x-\xi^i(t)|\rho(x,t)\,dx\\
&\leq
\mathcal{E}(0)+\frac12\iint_{|x-y|>1} \ln|x-y|\rho(x,t)\rho(y,t)\,dx\,dy\\ &+\sum_{i=1}^N\int \ln_- |x-\xi^i(t)|\rho(x,t)\,dx.
\end{split}\label{poten}
\end{equation}
 But $|x-y|>1$ implies $0< \ln|x-y|<|x|+|y|,$ so that 
by \eqref{ineq:rho3} and Proposition \ref{kinN} the first integral in \eqref{poten}
 is bounded by a constant. For the second one we recall the 
bounds \eqref{rho^2}, \eqref{ineq:rho} and again Proposition \ref{kinN} to conclude:
 $$
 \int \ln_- |x-\xi^i(t)|\rho(x,t)\,dx\leq  C.
 $$
 
Hence we have proved that
\begin{equation}
-\sum_{i\neq j=1}^N\ln |\xi^i(t)-\xi^j(t)|\leq C.
\label{dist}
\end{equation}

Now notice that the bound on the kinetic energy implies 
 $$\sup_{t\in[0,T]} |\xi^i(t)|\leq C,\quad i=1,\ldots,N,$$  so that, 
putting as before $
\Big(\sum\Big)^*$ to indicate the sum over all $i\neq j$ for which $|\xi^i(t)-\xi^j(t)|> 1,$ 
by \eqref{dist} we have:
\begin{equation*}\begin{split}
\sum_{ i\neq j=1}^N\ln_- |\xi^i(t)-\xi^j(t)|&=
-\sum_{i\neq j=1}^N\ln |\xi^i(t)-\xi^j(t)|+\Big(\sum_{ i\neq j=1}^N\Big)^{*}\ln |\xi^i(t)-\xi^j(t)|\\
&\leq\Big(\sum_{ i\neq j=1}^N\Big)^{*}\ln |\xi^i(t)-\xi^j(t)|+ C\\
&\leq
2\sum_{i=1}^N|\xi^i(t)|+ C\leq C
\end{split}\end{equation*}
and this implies the thesis.
\end{proof}

\medskip
Corollary \ref{coro:dist} is the key to pass from one to $N$ charges and  to establish the  

\medskip

{\it{Proof of Theorem \ref{thm:mainN}}.}

We denote by $S_t$ the support of the density $f(t)$. Next,  we set :
\begin{equation*}
\begin{split}
&\Lambda_i(t)=B\left(\xi^i(t),\frac{d}{8}\right)\times \R^2,
\quad \Lambda^c(t) = \bigcap_{i=1}^{N}\Lambda_i^c(t)\end{split}\end{equation*}
where $d$ is the constant in \eqref{dist1}, and 
\begin{equation*}\begin{split}
&{\cal{H}}_i(t)=\sup_{s\in [0,t]}\sup_{(x,v)\in \Lambda_i(s)\cap S_s}|h^i(x,v,s)|+C\\
&{\cal{H}}^c(t)=\sup_{s\in [0,t]}\max_i\sup_{(x,v)\in \Lambda^c(s)\cap S_s}|h^i(x,v,s)|+C,
\end{split}\end{equation*}
with $C$ sufficiently large for further purposes. Finally, let 
\begin{equation*}
{\cal{H}}(t)=\max \left\{\max_i {\cal{H}}_i(t), \ {\cal{H}}^c(t)\right \}.
\end{equation*}
\begin{remark}
Since $d_0/4>d/8$, there exists $C_1>0$ such that  
$S_0(C_0,d_0/4)\subset  S_0(C_1,d/8). $ Hence \eqref{max} implies that ${\cal{H}}(0)$ is finite.
 \end{remark}

Clearly, we have
\begin{equation*}
 |v|^2\leq 2 \mathcal{H}(t)+C\sum_{i=1}^N \ln_- |x-\xi^i(t)|,\quad \forall t\in [0,T],\quad 
\forall (x,v)\in S_t;
\end{equation*}
In particular, one can readily transpose to the present case the
a priori bounds given, in the single-charge case, for the electric field in terms of the largest 
energy (Propositions \ref{coro:E}, \ref{adjoined} and 
\ref{prop:al-lip}).
Hence, as before (Theorem \ref{thm:main2}), our goal is to prove that ${\cal{H}}(T)$ is bounded in term of the initial 
datum. The conclusion of Theorem \ref{thm:mainN} will then follow by simply mimicking the arguments 
of Section \ref{section:compactness}; we omit the details here. 
The strategy in obtaining an a priori bound on $\mathcal{H}(T)$ is the following. We consider a small time interval $[0,\Delta^*]$ given by
\begin{equation}
\Delta^*=\frac{d}{16\sqrt{3\mathcal{H}({t^*})}} \label{time}
\end{equation}
$t^*<T$ being a suitably small fixed time (see \eqref{choose}).  
We will prove that ${\cal{H}}({t^*})$ is bounded, by making an 
iteration procedure in time over the intervals $[(k-1)\Delta^*, k\Delta^*], k=1,\ldots,n=([t^*]+1)/\Delta^*.$ 
Since the time $t^*$ is only function of constants, the subsequent 
prolongation of the estimate from $t^*$ to $T$ will be straightforward.

\medskip

Now we claim:\\
\\
 {\bf{Claim}}: During the time interval $[0,\Delta^*]$ 
any characteristic of the plasma starting from $S_0(C_0,d_0/4)$ 
may approach at most one charge. More precisely, if
there exists an index $i$ for which
 \begin{equation}
 x(s_0)\in  B\left(\xi^i(s_0),\frac{d}{8}\right) \quad \text{for some } s_0\in [0, \Delta^*],
 \label{approach}
 \end{equation}
 then
 \begin{equation}
 x(t)\in B\left(\xi^i(t),\frac{3d}{8}\right) \quad \forall t\in [0,\Delta^*].\label{far}
 \end{equation}
\medskip

Indeed, \eqref{far} clearly holds if $x(s)\in  B\left(\xi^i(s),d/8\right)$ on $[0,\Delta^\ast]$. Otherwise we have
 $|x(s_1)-\xi^i(s_1)|=d/8$ for some $s_1$. We denote by $(s_-,s_+)$ the maximal connected component of $s_1$ in 
$[0,\Delta^\ast]$ 
on which 
\begin{equation*}
 \frac{d}{16}<|x(s)-\xi^i(s)|<\frac{3d}{8},\quad s\in(s_-,s_+).
\end{equation*}
Let $s\in (s_-,s_+)$. If $|x(s)-\xi^i(s)|\geq d/8$ then $x(s)\in \Lambda^c(s)$, so that
\begin{equation*} 
 |v(s)|\leq \sqrt{2\mathcal{H}({t^\ast})}+C< \sqrt{3\mathcal{H}({t^\ast})}.
\end{equation*}
Otherwise, we have $x(s)\in \Lambda_i(s)$ but $|x(s)-\xi^i(s)|>d/16$ hence we  obtain again
\begin{equation*}
  |v(s)|\leq \sqrt{2|h^i(x(s),v(s),s)|}+C\sqrt{|\ln|x(s)-\xi^i(s)|}< \sqrt{3\mathcal{H}({t^\ast})}.
\end{equation*}
By choice of  $\Delta^\ast$ (see \eqref{time}) we thus obtain
\begin{equation*}
\big| |x(s)-\xi^i(s)|-|x(s_1)-\xi^i(s_1)|\big|< \sqrt{3\mathcal{H}({t^\ast})}\Delta^\ast= \frac{d}{16}.
\end{equation*}
We conclude that $(s_-,s_+)=[0,\Delta^\ast]$ and \eqref{far} follows.

\medskip

Now let us consider $t\in (0,\Delta^*]$, an index $i$ and $(\bar{x},\bar{v}) \in \Lambda_i(t)\cap S_t.$ 
 Then it has to be
$$
(\bar{x},\bar{v})=(x(x,v,t),v(x,v,t))
$$
for some $(x,v)\in \Lambda_i (0)\cup \Lambda^c (0)$ since we just proved that 
 a characteristic may visit at most one ball $B(\xi^i,d/8)$ on $[0,\Delta^\ast]$. 

For $s\in [0,t]$ we compute
\begin{equation*}
\begin{split}
\frac{d}{dt}&h^i(x(s),v(s),s)=
( v(s)-\eta^i(s))\cdot  \left(E(x(s),s)+E(\xi^i(s),s)\right)\\ &-( v(s)-\eta^i(s))\cdot \sum_{j\neq i=1}^
N\left(F^j(\xi^i(s),s)+F^j(x(s),s)\right).
\end{split}\end{equation*}
Notice that, as in the case of one charge, the singular contribution due to the nearest $i$-th charge to $x$ 
disappears. For $j\neq i$, it follows from \eqref{far} that
\begin{equation*}
|F^j(\xi^i(ts),s)|+|F^j(x(s),s)|\leq \frac{C}{d} 
\end{equation*}
 so that the external fields $F^j$ are smooth gradient fields on $[0,\Delta^*]$ and the results stated in 
the preceding sections hold with minor modifications.  Consequently we arrive at the analogue of estimate \eqref{H}:
\begin{equation*}
|h^i(x(t),v(t),t)|\leq {\mathcal{H}}(0)+C\sqrt{ \mathcal{H}(t)}+C\int_0^t\ {\mathcal{H}}(s) \ ds.
\end{equation*}
Taking the supremum we obtain
\begin{equation}
\sup_{(\bar{x},\bar{v})\in \Lambda_i(t)\cap S_t}|h^i(\bar{x},\bar{v},t)|\leq  {\mathcal{H}}(0)+
 C\sqrt{ \mathcal{H}(t)}+C\Delta^*{\mathcal{H}}({\Delta^*}).
 \label{acca}
\end{equation}

Consider now $(\bar{x},\bar{v})\in \Lambda^c(t)\cap S_t.$ There exists $(x,v)$ such that 
$$
(\bar{x},\bar{v})=(x(x,v,t),v(x,v,t))
$$
and  $(x,v)\in \Lambda _i(0)\cup \Lambda^c(0)$ for some $i.$ Proceeding as above we get again:
\begin{equation}
|h^i(\bar{x},\bar{v},t)|\leq {\mathcal{H}}(0)+ C\sqrt{ \mathcal{H}(t)}+C\Delta^*{\mathcal{H}}({\Delta^*}).
\label{acca'}
\end{equation}
Now we observe that since $(\bar{x},\bar{v})\in \Lambda^c(t),$ for any $j\neq i$ we have
\begin{equation*}
\begin{split}
|h^j (\bar{x},\bar{v},t)|&\leq |h^i (\bar{x},\bar{v},t)|+C|\bar{v}|+\left|\ln|\bar{x}-\xi^i(t)|-
\ln|\bar{x}-\xi^j(t)|\right|\\&
\leq |h^i (\bar{x},\bar{v},t)|+C\sqrt{ \mathcal{H}(t)}
\end{split}
\end{equation*}
so that by \eqref{acca'} we find
\begin{equation}
\max_{i=1,\ldots,N}\sup_{(\bar{x},\bar{v})\in \Lambda^c(t)\cap S_t}|h^i(\bar{x},\bar{v},t)|
\leq {\mathcal{H}}(0)+ C\sqrt{ \mathcal{H}(t)}+C\Delta^*{\mathcal{H}}({\Delta^*}). \label{acca''}
\end{equation}
Hence  \eqref{acca}, \eqref{acca''} and the monotonicity of $\mathcal{H}(t)$ in $t$ imply for any $t\in [0, \Delta^*]$:
\begin{equation*}
{\mathcal{H}}({\Delta^*})\leq {\mathcal{H}}(0)+ C\sqrt{ \mathcal{H}({\Delta^*})}
+C\Delta^*{\mathcal{H}}({\Delta^*})\leq {\mathcal{H}}(0)+ C\sqrt{ \mathcal{H}({t^*})}+C\Delta^*{\mathcal{H}}({t^*})
\end{equation*}
provided the constant in the definition of $\mathcal{H}(t)$ is large enough. 
Recalling the definition \eqref{time} of $\Delta^*$  we arrive at
\begin{equation*}
{\mathcal{H}}({\Delta^*})\leq {\mathcal{H}}(0)+ C\Delta^*{\mathcal{H}}({t^*}).
\end{equation*}

By iterating the procedure $n=([t^*]+1)/\Delta^*$ times we get:
$$
{\mathcal{H}}({t^*})\leq {\mathcal{H}}({ n\Delta^*})
\leq {\mathcal{H}}(0)+Cn\Delta^*{\mathcal{H}}({t^*})={\mathcal{H}}(0)+Ct^*{\mathcal{H}}(t^*)
$$
which implies
\begin{equation*}
{\mathcal{H}}(t^*)\leq {\mathcal{H}}(0)\left(\frac{1}{1-Ct^*}\right).
\end{equation*}
Hence by fixing
\begin{equation*}
\label{choose}t^*=1/2C
\end{equation*} we get a bound on ${\mathcal{H}}({t^*}):$
\begin{equation}
{\mathcal{H}}(t^*)\leq 2{\mathcal{H}}(0). \label{double}
\end{equation}
 To go from $t^*$ to $2t^*$ we iterate the procedure, by dividing the interval $[t^*,2t^*]$ into 
$n$ intervals of width $\Delta^*=d/(16\sqrt{3\mathcal{H}({2t^*})})$ with $n=([t^*]+1)/\Delta^*$ and we find
$$
{\mathcal{H}}({2t^*})\leq {\mathcal{H}}({t^*})+Ct^*{\mathcal{H}}({2t^*})
$$
from which by \eqref{double} it follows
\begin{equation*}
{\mathcal{H}}({2t^*})\leq 2{\mathcal{H}}(0)\left(\frac{1}{1-Ct^*}\right)=4{\mathcal{H}}(0).
\end{equation*}
By the choice of $t^*$ the iteration stops at $Nt^*=T$ so that for $N=2C([T]+1)$ we finally obtain 
\begin{equation}
{\mathcal{H}}({T})\leq 2^N{\mathcal{H}}(0)
\leq C^T{\mathcal{H}}(0). \label{final}
\end{equation}
The conclusion of Theorem \ref{thm:mainN} now follows from \eqref{final} and a suitable adaptation of the arguments
presented in Section \ref{section:compactness}, as was explained at the beginning of this section. \qed

\end{document}